\DeclareFontFamily{U}{mathx}{}
\DeclareFontShape{U}{mathx}{m}{n}{<-> mathx10}{}
\DeclareSymbolFont{mathx}{U}{mathx}{m}{n}
\DeclareMathAccent{\widecheck}{0}{mathx}{"71}
\definecolor{LightBlue}{rgb}{0,0.8,1} 
\definecolor{Green}{rgb}{0,0.863,0}
\definecolor{DarkGreen}{rgb}{0,0.5,0}
\definecolor{MildGreen}{rgb}{0,0.784,0}
\definecolor{Turquoise}{rgb}{0,0.68,0.38}
\definecolor{NormalGreen}{rgb}{0,0.8,0}
\definecolor{LightGreen}{rgb}{0,0.922,0}
\definecolor{Magenta}{rgb}{0.784,0,0.784}
\definecolor{Yellow}{rgb}{0.95,0.95,0}
\definecolor{lavender}{rgb}{0.4,0,1}
\definecolor{peach}{rgb}{1,0.43,0.39} 
\definecolor{DarkPink}{rgb}{1,0,0.45} 
\definecolor{NewBlue}{rgb}{0,0.3,0.8}
\definecolor{Teal}{rgb}{0,0.784,0.784}
\definecolor{Gold}{rgb}{0.929,0.784,0.392}
\crefname{conjecture}{Conjecture}{Conjectures}
\newtheorem{theorem}{Theorem}[section]
\newtheorem{proposition}[theorem]{Proposition}
\theoremstyle{definition}
\newtheorem{remark}[theorem]{Remark}
\newtheorem{example}[theorem]{Example} 
\newcommand{\includeSymbol}[1]{\ensuremath{%
	\mathchoice
		{\raisebox{-.4mm}{\includegraphics[height=2.1ex]{#1}}}	
		{\raisebox{-.4mm}{\includegraphics[height=2.1ex]{#1}}}
		{\raisebox{-.3mm}{\includegraphics[height=1.6ex]{#1}}}
		{\raisebox{-.2mm}{\includegraphics[height=1ex]{#1}}}
}} 
\newcommand{\Paral}{\includeSymbol{Parallelogram}} 
\newcommand{\Rect}{\includeSymbol{Rectangle}}
\newcommand{\GoldBump}{\includeSymbol{GoldBump}}
\newcommand{\Bump}{\includeSymbol{Elbows}}
\newcommand{\Cross}{\includeSymbol{Cross}}
\newcommand{\SEElbow}{\includeSymbol{SEElbow}}
\newcommand{\NWElbow}{\includeSymbol{NWElbow}}
\newcommand{\LLT}{\includeSymbol{LowerLeftTriangle}} 
\newcommand{\URT}{\includeSymbol{UpperRightTriangle}} 
\newcommand{\ULT}{\includeSymbol{UpperLeftTriangle}} 
\newcommand{\Le}{\includeSymbol{BackwardL}}
\newcommand{\Hole}{\includeSymbol{Hole}}
\newcommand{\SSS}{\mathfrak{S}}
\newcommand{\h}{\mathbf{H}}
\newcommand{\bb}{\mathfrak{b}}
\newcommand{\ZZ}{\mathbb{Z}}
\newcommand{\sub}{\mathrm{sub}}
\newcommand{\bw}{\mathbf{w}} 
\newcommand{\sS}{\mathscr{S}}
\newcommand{\cc}{\mathsf{c}}
\newcommand{\ff}{\mathsf{f}}
\newcommand{\hh}{\mathsf{h}}
\newcommand{\xx}{x} 
\newcommand{\yy}{y} 
\newcommand{\vv}{\mathsf{v}} 
\newcommand{\RR}{\mathbf{R}}
\newcommand{\PP}{\mathscr{P}}
\newcommand{\DD}{\mathrm{D}}
\newcommand{\dfn}[1]{\textcolor{Turquoise}{\emph{#1}}}
\begin{document}

\title[]{Permutons from Demazure Products} 
\subjclass[2010]{}

\author[]{Colin Defant}
\address[]{Department of Mathematics, Harvard University, Cambridge, MA 02138, USA}
\email{colindefant@gmail.com}

\begin{abstract}
We construct and analyze several new families of permutons arising from random processes involving the Demazure product on the symmetric group. First, we consider Demazure products associated to random pipe dreams, generalizing the Grothendieck permutons introduced by Morales, Panova, Petrov, and Yeliussizov by replacing staircase shapes with arbitrary order-convex shapes. Using the totally asymmetric simple exclusion process (TASEP) with geometric jumps, we prove precise scaling limit and fluctuation results for the associated height functions, showing that these models belong to the Kardar--Parisi--Zhang (KPZ) universality class. We then consider permutons obtained by applying deterministic sequences of bubble-sort operators to random initial permutations. We again provide precise descriptions of the limiting permutons. In a special case, we deduce the exact forms of the standard bubble-sort permutons, the supports of which were computed by DiFranco. 

A crucial tool in our analysis is a formulation, due to Chan and Pflueger, of the Demazure product as matrix multiplication in the min-plus tropical semiring. This allows us to define a Demazure product on the set of permutons. We discuss further applications of this product. For instance, we show that the number of inversions of the Demazure product of two independent uniformly random permutations of size $n$ is $\binom{n}{2}(1-o(1))$.  
\end{abstract} 

\maketitle

\section{Introduction}\label{sec:intro} 

\subsection{Permutons} 
A \dfn{permuton} is a probability measure $\mu$ on the unit square $[0,1]^2$ that has uniform marginals in the sense that $\mu([0,1]\times [a,b])=\mu([a,b]\times [0,1])=b-a$ for all $0\leq a\leq b\leq 1$. The \dfn{plot} of a permutation $u$ in the symmetric group $\SSS_n$ is the diagram showing the points $(i/n,u(i)/n)$ for all $i\in[n]\coloneq\{1,\ldots,n\}$. We can enlarge each point $(i/n,u(i)/n)$ to a square $\Box_i(u)$ consisting of the points $(x,y)\in[0,1]^2$ such that $i-1\leq nx\leq i$ and $u(i)-1\leq ny\leq u(i)$; the resulting arrangement of squares forms a permuton $\pi_u$ in which each of the squares $\Box_i(u)$ has constant density $n$ and the complement of these squares has constant density $0$. 

Permutons have received a great deal of attention over the past fifteen years because they serve as scaling limits of large (often random) permutations. More precisely, we can think of a permuton $\mu$ as a scaling limit of a sequence $(u_n)_{n\geq 1}$ with $u_n\in\SSS_n$ if $(\pi_{u_n})_{n\geq 1}$ converges weakly to $\mu$. According to \cite{HKMRS}, the sequence $(\pi_{u_n})_{n\geq 1}$ converges weakly to $\mu$ if and only if all of the pattern densities in $u_n$ converge to the corresponding pattern densities in $\mu$. 

An alternative---and often more convenient---way of encoding a permuton $\mu$ is provided by its \dfn{height function} $\h_\mu\colon[0,1]^2\to[0,1]$, which is defined by 
\begin{equation}\label{eq:height}
\h_\mu(x,y)=\mu([x,1]\times [0,y]). 
\end{equation}
A sequence $(\mu_n)_{n\geq 1}$ of permutons converges weakly to a permuton $\mu$ if and only if the sequence $(\h_{\mu_n})_{n\geq 1}$ converges pointwise to $\h_\mu$. Given a permutation $u$, we write $\h_u$ instead of $\h_{\pi_u}$.  

\subsection{The Demazure Product} 

Let $s_i$ denote the transposition $(i\,\, i+1)$ in $\SSS_n$. An \dfn{inversion} of a permutation $v\in\SSS_n$ is a pair $(a,b)$ such that $a<b$ and $v^{-1}(a)>v^{-1}(b)$. Let $\ell(v)$ denote the number of inversions of $v$. A \dfn{word} is a tuple $(i_1,\ldots,i_r)$ of elements of $[n-1]$; we say this word \dfn{represents} the permutation $s_{i_1}\cdots s_{i_r}$. A word representing a permutation $u$ is \dfn{reduced} if it has the minimum length among all words representing $u$; the length of a reduced word for $u$ is $\ell(u)$. We write $\mathbf{u}\mathbf{v}$ for the concatenation of two words $\mathbf{u}$ and $\mathbf{v}$. One can apply a \dfn{commutation move} to a word by swapping two adjacent letters whose values differ by at least $2$. Two words are \dfn{commutation equivalent} if one can be obtained from the other via a sequence of commutation moves. 

For $i\in[n-1]$, define $\tau_i\colon\SSS_n\to\SSS_n$ by 
\begin{equation}\label{eq:tau}
\tau_i(u)=\begin{cases}
    us_i & \text{if } \ell(us_i)>\ell(u); \\
    u & \text{if } \ell(us_i)<\ell(u).
\end{cases}
\end{equation} 
In other words, $\tau_i$ acts on a permutation by putting the entries in positions $i$ and $i+1$ in decreasing order (leaving the rest of the permutation unchanged). 

The \dfn{Demazure product} is the unique associative product $\star$ on $\SSS_n$ that satisfies $u\star s_i=\tau_i(u)$ for all $u\in\SSS_n$ and $i\in[n-1]$. We define the Demazure product of a word $(i_1,\ldots,i_r)$ to be $s_{i_1}\star\cdots\star s_{i_r}$. We can compute the Demazure product $u\star v$  of two permutations $u,v\in\SSS_n$ by first choosing a reduced word $\mathbf{u}$ for $u$ and a reduced word $\mathbf{v}$ for $v$ and then computing the Demazure product of the word $\mathbf{u}\mathbf{v}$. The set $\SSS_n$ equipped with the Demazure product is known as the \dfn{$0$-Hecke monoid} (of $\SSS_n$) \cite{CP, Demazure, KnutsonMiller, LORYY, Pflueger}. Our goal in this article is to study various permutons that arise from natural random processes involving the Demazure product. 

\subsection{Shapes and Pipe Dreams} 
For $x,y\in\ZZ$ with $x>y$, define the \dfn{box} $\bb(x,y)$ to be the axis-parallel unit square in $\mathbb{R}^2$ whose lower-left vertex is $(x,y)$. The \dfn{content} of a box $\bb(x,y)$ is $x-y$. A set of boxes is called a \dfn{shape}. A shape is naturally equipped with a partial order $\leq$ in which ${\bb(x,y)\leq\bb(x',y')}$ if and only if $x\leq x'$ and $y\leq y'$. Given a shape $\mathscr{S}$, we can choose a linear extension $\bb(x_1,y_1),\bb(x_2,y_2),\ldots,\bb(x_r,y_r)$ of $\sS$ to obtain a word \[\mathbf{w}(\mathscr{S})=(x_1-y_1,x_2-y_2,\ldots,x_r-y_r).\] Any two words arising in this manner from the same shape are commutation equivalent, which makes them effectively the same for our purposes. 

For $a\in\mathbb R$, we define the line \[\mathfrak{L}_a=\{(x,y)\in\mathbb{R}^2:x-y=a\}.\] 
For each positive integer $n$, let $\boldsymbol{\Lambda}_n$ be the set of lattice paths that use unit east steps and unit south steps, start on $\mathfrak{L}_0$, and end on $\mathfrak{L}_n$. Suppose $\Lambda_{\swarrow}$ and $\Lambda_{\nearrow}$ are two lattice paths in $\boldsymbol{\Lambda}_n$ such that $\Lambda_{\swarrow}$ stays weakly southwest of $\Lambda_{\nearrow}$. We obtain a shape $\sS(\Lambda_{\swarrow},\Lambda_{\nearrow})$ consisting of all boxes lying in the region bounded by the paths $\Lambda_{\swarrow}$ and $\Lambda_{\nearrow}$ and the lines $\mathfrak{L}_0$ and $\mathfrak{L}_n$. A shape obtained in this manner is called \dfn{order-convex}. 

A subword of $\mathbf{w}(\mathscr{S})$ can be represented graphically via a \dfn{pipe dream}. For simplicity, let us assume for now that $\sS$ is order-convex so that $\sS=\sS(\Lambda_{\swarrow},\Lambda_{\nearrow})$ for some positive integer $n$ and some lattice paths $\Lambda_{\swarrow},\Lambda_{\nearrow}\in\boldsymbol{\Lambda}_n$. Let $\bb(x_1,y_1),\bb(x_2,y_2),\ldots,\bb(x_r,y_r)$ be the linear extension of $\mathscr{S}$ that induces the word $\mathbf{w}(\mathscr{S})$. If the $i$-th letter in $\mathbf{w}(\mathscr{S})$ is deleted when we form the subword of $\bw(\sS)$, fill the box $\bb(x_i,y_i)$ with a \dfn{bump tile} $\Bump$\,; otherwise, fill $\bb(x_i,y_i)$ with a \dfn{cross tile} $\Cross$\,. Let us also add \dfn{elbows} $\SEElbow$ and $\NWElbow$ along the lines $\mathfrak{L}_{1/2}$ and $\mathfrak{L}_{n-1/2}$, respectively so that we have $n$ \dfn{pipes} that travel from $\Lambda_{\swarrow}$ to $\Lambda_{\nearrow}$. See the left-hand side of \cref{fig:pipes_1}. 

\begin{figure}[]
  \begin{center}
  \includegraphics[height=6.375cm]{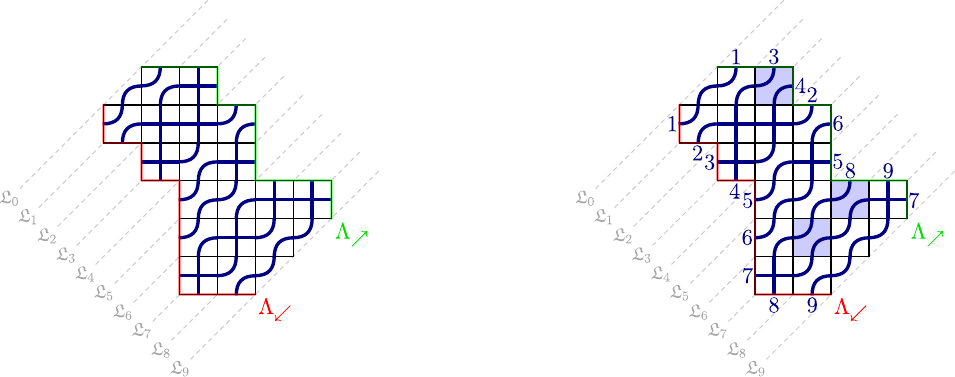}
  \end{center}
\caption{On the left is a pipe dream in an order-convex shape $\sS(\Lambda_{\swarrow},\Lambda_{\nearrow})$, where the lattice paths $\Lambda_{\swarrow},\Lambda_{\nearrow}\in\boldsymbol{\Lambda}_9$ are drawn in {\color{red}red} and {\color{Green}green}. The image on the right computes the Demazure product of the corresponding subword by resolving the three crossings in the shaded boxes. This Demazure product is $134265897$. }\label{fig:pipes_1} 
\end{figure}

We can also compute the Demazure product of a subword $\mathbf{v}$ of $\bw(\sS)$ from the associated pipe dream as follows. First, we label the pipes $1,\ldots,n$ in the order that they appear along $\Lambda_\swarrow$ when we read from northwest to southeast (so pipe $i$ starts on the line $\mathfrak{L}_{i-1/2}$). We then trace along the pipes from southwest to northeast. Whenever we see two pipes that have already crossed and are about to cross each other again in some box $\bb(x,y)$, we change the cross tile in $\bb(x,y)$ into a bump tile, and we say that we \dfn{resolved the crossing} in box $\bb(x,y)$. After continuing this process throughout the entire shape, we are left with a pipe dream that is \dfn{reduced} in the sense that no two pipes cross each other more than once. We then read the labels of the pipes along $\Lambda_{\nearrow}$ from northwest to southeast to obtain a permutation in $\SSS_n$. This permutation is the Demazure product of $\mathbf{v}$. See the right-hand side of \cref{fig:pipes_1}.  

\subsection{Random Subwords and Pipe Dreams}\label{subsec:intro_MPPY}

Now fix a probability $p\in(0,1]$. Consider an order-convex shape ${\mathscr{S}=\sS(\Lambda_{\swarrow},\Lambda_{\nearrow})}$. Let $\sub_p(\mathbf{w}(\mathscr{S}))$ be the random subword of $\mathbf{w}(\mathscr{S})$ obtained by independently deleting each letter with probability $1-p$. Let $\Delta_p(\mathscr{S})$ be the Demazure product of the word $\sub_p(\mathbf{w}(\mathscr{S}))$. We are interested in studying the random permutation $\Delta_p(\mathscr{S})$, whose distribution depends only on the shape $\mathscr{S}$ and the probability $p$ (not the particular choice of the word $\mathbf{w}(\mathscr{S})$ associated to $\mathscr{S}$). As above, the subword $\sub_p(\bw(\sS))$ can be viewed as a random pipe dream in $\sS$, and the Demazure product $\Delta_p(\sS)$ can be computed from the pipe dream by resolving crossings. See \cref{fig:rectangle_example}.   

\begin{figure}[]
  \begin{center}
  \includegraphics[width=\linewidth]{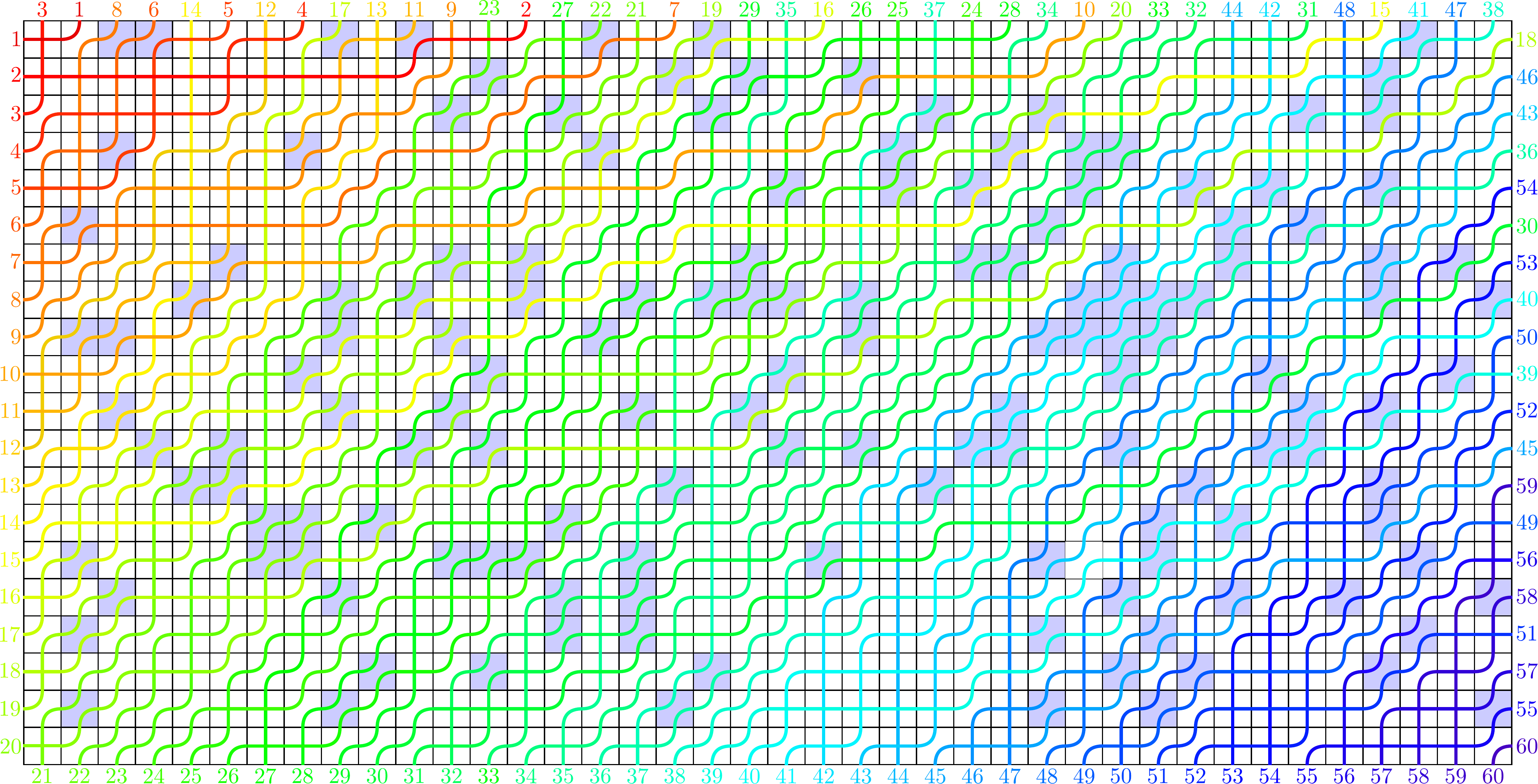}
  \end{center}
\caption{A random pipe dream with parameter $p=1/2$ in a $20\times 40$ rectangle shape $\mathscr{S}$ with crossings resolved in the shaded boxes. Different pipes receive different colors. The Demazure product $\Delta_p(\sS)$ is obtained by reading the numbers along the northeast boundary.  }\label{fig:rectangle_example} 
\end{figure} 

\begin{figure}[]
  \begin{center}
  \includegraphics[height=5cm]{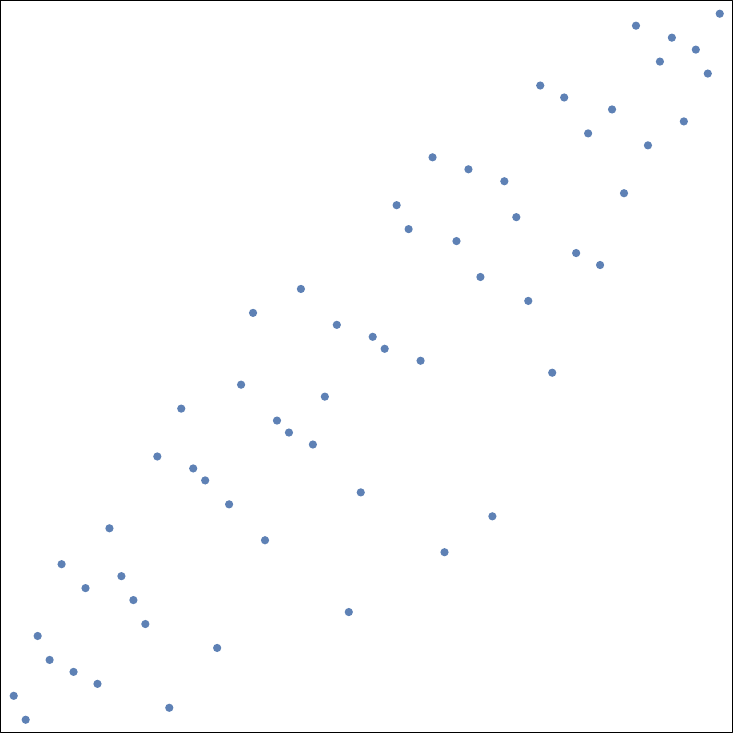}\quad\includegraphics[height=5cm]{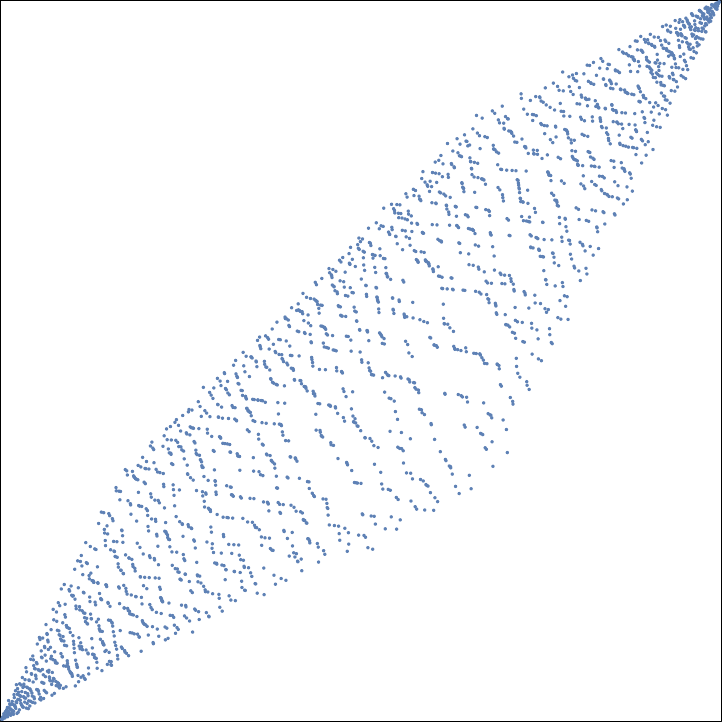}\quad\includegraphics[height=5cm]{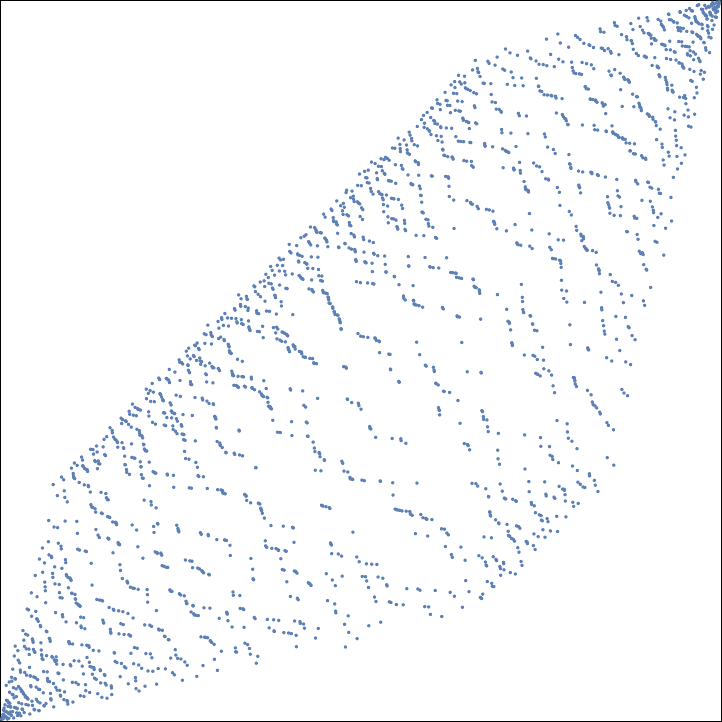}
  \end{center}
\caption{On the left is the plot of the random permutation $\Delta_{1/2}(\sS)\in\SSS_{60}$ from \cref{fig:rectangle_example}. In the middle and on the right are plots of the random permutations $\Delta_{1/2}(\sS)$ and $\Delta_{3/4}(\sS)$ in $\SSS_{2400}$, where $\sS$ is an $800\times 1600$ rectangle shape.  }\label{fig:rectangle_example_plot}  
\end{figure} 

One especially noteworthy shape is the \dfn{staircase shape} $\sS_{\mathrm{stair}}^{(n)}$, which is the order-convex shape whose southwest boundary is the path that travels south from $(0,0)$ to $(0,-n)$ and whose northeast boundary is the path that travels east from $(0,0)$ to $(n,0)$. Pipe dreams in the staircase shape are ubiquitous in Schubert calculus, where they are used to compute Schubert polynomials and Grothendieck polynomials \cite{BB,FominKirillov,HPW,KnutsonMiller2,KnutsonMiller}. Morales, Panova, Petrov, and Yeliussizov \cite{MPPY} studied the Demazure product $\Delta_p(\sS_{\mathrm{stair}}^{(n)})$, showing that it converges with high probability to a particular permuton as $n\to\infty$. Their approach connected random pipe dreams in the staircase shape with the stochastic $6$-vertex model and the TASEP with geometric jumps. This allowed them to employ asymptotic results about the TASEP to derive very precise asymptotic results about $\Delta_p(\sS_{\mathrm{stair}}^{(n)})$. 

Our first main result is a vast generalization of one of the theorems from \cite{MPPY} that applies to arbitrary order-convex shapes rather than just staircase shapes. In contrast to that article, we bypass the stochastic $6$-vertex model and work directly with the TASEP with geometric jumps. In order to state this result, we need some additional notation and terminology. 

Let $\mathfrak B$ be the set of continuous and piecewise smooth functions $\theta\colon[0,1]\to\mathbb R$ such that the derivative of $\theta$ lies in $[0,1]$ at every point where it exists. 
Given a function $\theta\in\mathfrak B$, we form a parametric curve ${C_\theta=\{C_\theta(z):0\leq z\leq 1\}}$ in $\mathbb R^2$, where $C_\theta(z)=(\theta(z),\theta(z)-z)$. Let \[\mathfrak{C}=\{C_\theta:\theta\in\mathfrak{B}\}.\] A curve in $\mathfrak{C}$ must start on the line $\mathfrak{L}_0$ and end on the line $\mathfrak{L}_1$. 
If $\Lambda$ is a lattice path in $\boldsymbol{\Lambda}_n$, then we can view the scaled path $\frac{1}{n}\Lambda$ as a curve in $\mathfrak C$; we then write $\theta_{\Lambda}$ for the function in $\mathfrak B$ such that $\frac{1}{n}\Lambda=C_{\theta_{\Lambda}}$. Given $\varphi,\psi\in\mathfrak B$ such that $\varphi(z)\leq\psi(z)$ for all $z\in[0,1]$, we obtain a region $\mathscr{R}^{\varphi,\psi}\subseteq\mathbb R^2$ bounded by the curves $C_\varphi$ and $C_\psi$ and the lines $\mathfrak L_0$ and $\mathfrak L_1$. Let us write 
\begin{equation}\label{eq:RR}
\RR=\{\mathscr{R}^{\varphi,\psi}:\varphi,\psi\in\mathfrak{B},\,\varphi(z)\leq\psi(z)\text{ for all }z\in[0,1]\}
\end{equation} 
for the set of all regions of this form.

Fix $\varphi,\psi\in\mathfrak B$ such that $\varphi(z)\leq \psi(z)$ for all $z\in[0,1]$. Define functions $\ff_p^{\varphi,\psi}\colon[0,1]\to\mathbb R$ and $\hh_p^{\varphi,\psi}\colon[0,1]\to\mathbb R$ by 
\begin{equation}\label{eq:f}
\ff_p^{\varphi,\psi}(x,y)=\frac{1}{p}\left[(2-p)(\psi(x)-\varphi(y))+y-x-2\sqrt{(1-p)(\psi(x)-\varphi(y))(\psi(x)-\varphi(y)-x+y)}\right]
\end{equation} 
and 
\begin{equation}\label{eq:h} 
\hh_p^{\varphi,\psi}(\xx,\yy)=\min\{\max\{0,y-x,\ff_p^{\varphi,\psi}(x,y)\},1-x,y\}. 
\end{equation} 

The next theorem states that if we have a sequence of order-convex shapes that, after appropriate scaling, converge to a region $\DD\in\RR$, then the Demazure products of random pipe dreams in these shapes converge to a particular permuton $\zeta_p^\DD$ with high probability. 

\begin{theorem}\label{thm:NewMPPY}
Fix $\varphi,\psi\in\mathfrak B$ such that $\varphi(z)\leq\psi(z)$ for all $z\in[0,1]$, and let $\DD=\mathscr{R}^{\varphi,\psi}\in\RR$. There is a permuton $\zeta_p^\DD$ whose height function is $\hh_p^{\varphi,\psi}$. For each $n\geq 1$, choose lattice paths $\Lambda_{\swarrow}^{(n)}$ and $\Lambda_{\nearrow}^{(n)}$ in $\boldsymbol{\Lambda}_n$ such that the former lies weakly below the latter. Assume that $\theta_{\Lambda_{\swarrow}^{(n)}}$ and $\theta_{\Lambda_{\nearrow}^{(n)}}$ converge pointwise to $\varphi$ and $\psi$, respectively. Let \[u_n=\Delta_p\left(\sS\left(\Lambda_{\swarrow}^{(n)},\Lambda_{\nearrow}^{(n)}\right)\right).\] Then \[\lim_{n\to\infty}\h_{u_n}(\xx,\yy)=\hh_p^{\varphi,\psi}(\xx,\yy)\] with probability $1$ for every $(x,y)\in[0,1]^2$. Equivalently, $(\pi_{u_n})_{n\geq 1}$ converges weakly to $\zeta_p^\DD$.  
\end{theorem}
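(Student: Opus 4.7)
The plan is to follow the TASEP-based approach of Morales--Panova--Petrov--Yeliussizov \cite{MPPY}, extending it from the staircase to arbitrary order-convex shapes by encoding the random pipe dream as a geometric last-passage percolation problem whose boundary is controlled by the two lattice paths $\Lambda_\swarrow^{(n)}$ and $\Lambda_\nearrow^{(n)}$. Before the probabilistic analysis, I first verify the deterministic claim that $\hh_p^{\varphi,\psi}$ is the height function of a probability measure on $[0,1]^2$ with uniform marginals. Given the explicit form in \eqref{eq:h}, the boundary identities $\hh_p^{\varphi,\psi}(0,y)=y$, $\hh_p^{\varphi,\psi}(x,1)=1-x$, $\hh_p^{\varphi,\psi}(x,0)=0$, $\hh_p^{\varphi,\psi}(1,y)=0$ follow directly; monotonicity in each coordinate and nonnegativity of the mixed distributional derivative reduce to a calculation on $\ff_p^{\varphi,\psi}$, which one can simplify to the more suggestive form
\[
\ff_p^{\varphi,\psi}(x,y) = \frac{1}{p}\left(\sqrt{(1-p)(\psi(x)-\varphi(y))} - \sqrt{\psi(x)-\varphi(y)-x+y}\,\right)^{2},
\]
a formula that also makes the connection to the Johansson LPP shape function manifest.

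The heart of the proof is a coupling between the random pipe dream and a TASEP with geometric jumps. Tracing the pipes from southwest to northeast and interpreting each box as an update step, a kept letter (cross tile) allows two neighboring pipes to swap provided they have not crossed previously, while a deleted letter (bump tile) forces them to stay put. Viewing each pipe as a particle, this dynamics is exactly a discrete-time TASEP in which each particle performs geometric$(p)$ jumps subject to the exclusion rule, precisely the model appearing in \cite{MPPY}. Using the standard bijection between TASEP with geometric jumps and geometric LPP, the quantity $n\cdot\h_{u_n}(x,y)$---the number of pipes entering at one of the first $ny$ positions on $\Lambda_\swarrow^{(n)}$ and exiting at one of the last $n(1-x)$ positions on $\Lambda_\nearrow^{(n)}$---can be read off from the last-passage time of a directed path through an environment of i.i.d.\ $\mathrm{Geom}(1-p)$ weights, with source and target determined by the scaled lattice paths. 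Johansson's law of large numbers for geometric LPP then provides an explicit almost-sure scaling limit which, after substitution, matches $\ff_p^{\varphi,\psi}(x,y)$; the truncations by $0$, $y-x$, $1-x$, and $y$ in $\hh_p^{\varphi,\psi}$ arise from the trivial a priori inequalities $\max\{0,y-x\}\leq \h_{u_n}(x,y)\leq \min\{1-x,y\}$, which already pinch $\h_{u_n}$ to the correct envelope.

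To pass from almost-sure convergence at a fixed $(x,y)$ to the full statement of the theorem, I apply the LLN on a countable dense subset of $[0,1]^2$ and exploit the monotonicity of $\h_{u_n}$ in each coordinate, together with the continuity of the limit $\hh_p^{\varphi,\psi}$, to extend the convergence to every $(x,y)\in[0,1]^2$ simultaneously. The equivalence of pointwise convergence of height functions with weak convergence of permutons then yields $\pi_{u_n}\Rightarrow\zeta_p^\DD$. The main obstacle I anticipate is setting up the LPP encoding rigorously for arbitrary order-convex shapes: unlike the staircase case, both the source and target of the LPP are curved, determined by $\theta_{\Lambda_\swarrow^{(n)}}$ and $\theta_{\Lambda_\nearrow^{(n)}}$, so one must extend Johansson's LLN to LPP problems with general curved boundary data and show that the limit is continuous in these boundary curves under the merely pointwise convergence of $\theta_{\Lambda_\swarrow^{(n)}},\theta_{\Lambda_\nearrow^{(n)}}$ to $\varphi,\psi$. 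This boundary-curve dependence is precisely what distinguishes the general order-convex case from the staircase handled in \cite{MPPY}, and is where most of the new technical work resides.
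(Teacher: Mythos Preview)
Your overall strategy---encode the pipe dream as a TASEP with geometric jumps and invoke a hydrodynamic limit---is the right one and matches the paper.  But the ``main obstacle'' you identify (curved-boundary LPP and a generalized Johansson LLN depending continuously on two boundary curves) is a red herring, and anticipating it as the crux leaves a real gap in your proposal: you never actually carry out that step, and doing so from scratch would be substantial.

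The paper sidesteps the issue entirely.  The key observation, which you are missing, is that for a \emph{fixed} point $(x,y)$ the order-convex shape enters only through a single scalar: the number of time steps at which one observes the standard step-initial-condition TASEP.  Concretely, process the shape column by column.  Taking $k=\lfloor n(1-y)\rfloor$ particles and tracking the positions $n+1-v_j^{-1}(i)$ for $i>n-k$, the first $T\approx n\varphi(y)$ columns do not touch these particles at all, and the columns after $T'\approx n\psi(x)$ cannot move any particle past position $n(1-x)$.  The inequalities \eqref{eq:a_j_b_j} guarantee that on columns $T+1,\ldots,T'$ the update is distributionally identical to one step of the ordinary $k$-particle geometric-jump TASEP.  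Hence $(x-y+\h_{u_n}(x,y))n$ has the same law as the number of particles past position $n(1-x)$ after $T'-T\approx n(\psi(x)-\varphi(y))$ steps of the \emph{standard} TASEP.  No curved boundaries, no new LPP theorem: one simply plugs ${\sf m}=x-y+h$ and ${\sf t}=\psi(x)-\varphi(y)$ into the existing TASEP asymptotics (Theorem~\ref{thm:TASEP}) and solves $\cc_p(x-y+h,\psi(x)-\varphi(y))=h$ for $h$, which gives exactly your simplified square formula for $\ff_p^{\varphi,\psi}$.

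Two smaller points.  First, there is no need to verify directly that $\hh_p^{\varphi,\psi}$ is a permuton height function; once you have the pointwise limit $\h_{u_n}\to\hh_p^{\varphi,\psi}$, this follows for free from Proposition~\ref{lem:permuton_convergence}.  Second, your description of the TASEP coupling (``each pipe as a particle'') is slightly off: the particles correspond to the pipes with labels in $[n+1-k,n]$, and a single column of the shape corresponds to one parallel-update time step, not one box to one step.
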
 

When $p=1$, there is no randomness involved in \cref{thm:NewMPPY}. However, when $0<p<1$, the permutations $u_n$ in that theorem are stochastic, and we can consider height function fluctuations. The next theorem describes these fluctuations asymptotically, showing that this model belongs to the famous Kardar--Parisi--Zhang (KPZ) universality class (see the survey \cite{Corwin} for more information). Let us define 
\begin{equation}\label{eq:K}
\mathscr{K}^{\varphi,\psi}=\{(x,y)\in[0,1]^2:\max\{0,y-x\}<\ff_p^{\varphi,\psi}(x,y)<\min\{1-x,y\}\}.
\end{equation} 
For ${\sf m},{\sf t}\in\mathbb R$, define 
\begin{equation}\label{eq:v} 
\vv_p({\sf m},{\sf t})=\frac{{\sf m}^{1/3}\left(\sqrt{{\sf t}/p}-\sqrt{{\sf m}}\right)^{2/3}\left(\sqrt{p{\sf t}}-\sqrt{{\sf m}}\right)^{2/3}}{\left(\sqrt{{\sf t}}-\sqrt{p{\sf m}}\right){\sf t}^{1/6}}. 
\end{equation}

\begin{theorem}\label{thm:fluctuations} 
Preserve the notation from \cref{thm:NewMPPY}. If $(\xx,\yy)\in\mathscr K^{\varphi,\psi}$, then for all $r\in\mathbb R$, we have
\[\lim_{n\to\infty}\mathbb P\left(\frac{\h_{u_n}(\xx,\yy)-\hh_p^{\varphi,\psi}(\xx,\yy)}{\vv_p\left(x+y-\hh_p^{\varphi,\psi}(x,y),\psi(x)-\varphi(y)\right)n^{-2/3}}\geq -r\right)=F_2(r),\] where $F_2$ is the cumulative distribution function of the Tracy--Widom GUE distribution. 
\end{theorem}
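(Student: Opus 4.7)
The plan is to reduce \cref{thm:fluctuations} to Johansson's classical Tracy--Widom GUE fluctuation theorem for geometric last passage percolation (LPP). The dictionary is the same one used to establish \cref{thm:NewMPPY}: the random pipe dream in $\sS(\Lambda_\swarrow^{(n)},\Lambda_\nearrow^{(n)})$ is coupled with a run of the TASEP with geometric jumps, which is in turn equivalent to a geometric LPP model through the standard TASEP--LPP correspondence. Under this coupling, $n\cdot\h_{u_n}(\xx,\yy)$ should be expressible, up to negligible boundary corrections, as a geometric LPP last-passage time across a subregion of the shape whose extent is dictated by the point $(\xx,\yy)$ and the boundary curves $C_\varphi$ and $C_\psi$.

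I would first argue that for a fixed $(\xx,\yy)\in\mathscr{K}^{\varphi,\psi}$, the effective LPP domain grows linearly in $n$ with dimensions governed by the two parameters $\mathsf{m}=\xx+\yy-\hh_p^{\varphi,\psi}(\xx,\yy)$ and $\mathsf{t}=\psi(\xx)-\varphi(\yy)$. The defining condition of $\mathscr{K}^{\varphi,\psi}$ in \eqref{eq:K} is precisely the condition that $(\mathsf{m},\mathsf{t})$ lies strictly inside the rarefaction-fan regime of geometric LPP, away from the Gaussian edges and the frozen regions where the limiting distribution would degenerate or change type. Since the underlying shapes $\sS(\Lambda_\swarrow^{(n)},\Lambda_\nearrow^{(n)})$ are only order-convex rather than rectangular, a slow-decorrelation argument is needed: fluctuations at the interior point $(\xx,\yy)$ depend, to leading order in $n$, only on the local linear approximations of $\varphi$ and $\psi$ near that point, so one can replace the curved boundaries by their tangent lines and invoke Johansson's rectangular LPP theorem verbatim. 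Rescaling the resulting $n^{1/3}$ passage-time fluctuations by the factor of $n$ needed to convert back to the height function yields the $n^{-2/3}$ scaling in the statement.

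The main obstacle is the algebraic matching between Johansson's explicit fluctuation constant for geometric LPP and the expression $\vv_p(\mathsf{m},\mathsf{t})$ in \eqref{eq:v}. This requires a two-step change of variables: first from the permuton coordinates $(\xx,\yy)$ to the natural LPP rectangle dimensions (via implicit differentiation of the relations defining $\ff_p^{\varphi,\psi}$ and $\hh_p^{\varphi,\psi}$ in \eqref{eq:f} and \eqref{eq:h}), and then from Johansson's canonical fluctuation constant to $\vv_p(\mathsf{m},\mathsf{t})$. Checking that the centering coincides with $\hh_p^{\varphi,\psi}(\xx,\yy)$ is automatic by consistency with \cref{thm:NewMPPY}; the genuinely new input is verifying that the characteristic cube-root and sixth-root combinations of $\sqrt{\mathsf{t}/p}-\sqrt{\mathsf{m}}$, $\sqrt{p\mathsf{t}}-\sqrt{\mathsf{m}}$, and $\sqrt{\mathsf{t}}-\sqrt{p\mathsf{m}}$ appearing in \eqref{eq:v} arise precisely from the second-order expansion of the Johansson limit shape at the base point together with the chain rule for the change of variables. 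I expect this bookkeeping, rather than any abstract probabilistic input, to be the delicate step of the proof.
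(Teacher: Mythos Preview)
Your overall direction---reduce to a known Tracy--Widom fluctuation result via the TASEP/LPP dictionary---is correct, but you have misidentified where the work lies, and the paper's actual argument is considerably shorter than what you outline.

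The key point you miss is that the coupling between the pipe dream and the TASEP, as set up in the proof of \cref{thm:NewMPPY}, gives an \emph{exact} distributional identity, not merely an approximation: for fixed $(\xx,\yy)$, the quantity $n\h_{u_n}(\xx,\yy)$ has the same law (up to the trivial truncation at $\min\{1-\xx,\yy\}$, which is irrelevant inside $\mathscr{K}^{\varphi,\psi}$) as a TASEP particle-position observable at time $T'-T$, where $T'$ depends only on the value $\psi(\xx)$ and $T$ depends only on the value $\varphi(\yy)$. The curved boundaries of the order-convex shape enter \emph{only} through these two scalar values; there is no dependence on the local slopes of $\varphi$ or $\psi$, and hence no need whatsoever for a slow-decorrelation argument, tangent-line replacement, or any appeal to local linearization. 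Your proposed step of ``replacing the curved boundaries by their tangent lines'' is solving a nonexistent problem.

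Consequently, the paper does not go through Johansson's LPP theorem at all. It simply invokes the already-packaged TASEP fluctuation statement (\cref{thm:TASEP}, quoted from \cite{MPPY}) with the substitutions ${\sf m}=\xx-\yy+\hh_p(\xx,\yy)$ and ${\sf t}=\psi(\xx)-\varphi(\yy)$, obtaining
\[
\lim_{n\to\infty}\mathbb{P}\bigl(\h_{u_n}(\xx,\yy)\geq \hh_p(\xx,\yy)-Rn^{-2/3}\bigr)=F_2\!\left(R\bigl(\tfrac{1}{{\sf q}_p({\sf m},{\sf t})}+\tfrac{1}{{\sf r}_p({\sf m},{\sf t})}\bigr)\right),
\]
and then checks the elementary algebraic identity $\tfrac{1}{{\sf q}_p}+\tfrac{1}{{\sf r}_p}=\tfrac{1}{\vv_p}$ from \eqref{eq:qr} and \eqref{eq:v}. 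There is no implicit differentiation and no second-order expansion of a limit shape; the ``delicate bookkeeping'' you anticipate collapses to a one-line simplification of radicals. Your route via LPP would eventually yield the same answer, but it reproves \cref{thm:TASEP} from scratch and introduces an approximation step that the exact coupling makes unnecessary.
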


When $\varphi(z)=0$ and $\psi(z)=z$ for all $z\in[0,1]$, \cref{thm:NewMPPY,thm:fluctuations} recover a theorem from \cite{MPPY}. 

For a particular (reasonably nice) choice of $\DD$, one can describe the geometric properties of the permuton limit $\zeta_p^{\DD}$ from \cref{thm:NewMPPY} quite explicitly. We will illustrate this with some examples in \cref{subsec:gems,subsec:malts}. In \cref{subsec:gems}, we consider the case where $\DD$ is a rectangle. This leads to a new class of permutons that we call \emph{peridot permutons}; see \cref{fig:rectangle_example_plot,fig:gems}. In \cref{subsec:malts}, we consider the case where $\varphi$ and $\psi$ are linear functions so that $\DD$ is a trapezoid. This leads to a new class of permutons that we call \emph{Polyphemus permutons}; see \cref{fig:trapezoid_example_plot}.  

We can actually generalize the permuton limit result in \cref{thm:NewMPPY} further by considering even more exotic shapes. Roughly speaking, one could use our techniques to compute permuton limits whenever the scaling limits of the shapes can be ``decomposed'' into finitely many disjoint pieces in $\RR$. However, in practice, the resulting formulas become complicated quickly. In \cref{subsec:penguins}, we will discuss this method and provide examples illustrating some of the permuton shapes that it can produce (\emph{Platyhelminthes permutons} and \emph{pointy peanut permutons}).  

\begin{remark}
Here, we consider the Demazure product of a random subword of a fixed starting word associated to a shape. In \cite{DefantStoned}, the author studied a similar procedure, but in an affine Weyl group rather than the symmetric group.   
\end{remark}

\subsection{Bubble-Sort Permutons}\label{subsec:intro_Bubble} 

In the previous subsection, we considered a random permutation obtained by taking the Demazure product of a random subword of a fixed starting word. We now flip the randomness on its head and apply a deterministic sequence of operators to a random starting permutation. 

Recall the operators defined in \eqref{eq:tau}. Given a word $\mathbf{u}=(i_1,\ldots,i_r)$, let $\tau_{\mathbf{u}}=\tau_{i_r}\circ\cdots\circ\tau_{i_1}$. The operator $\tau_{(1,2,\ldots,n-1)}$ is the classical \dfn{bubble-sort map}. 

Suppose we fix $\alpha\in[0,1]$ and consider the permutation $\tau_{(1,2,\ldots,n-1)}^{\left\lfloor\alpha n\right\rfloor}(u)$, where $u$ is a uniformly random permutation in $\SSS_n$ and $n$ is large. As one can see from \cref{fig:BubbleSort}, the plot of this permutation seems to have a definitive shape bounded below by a curve defined by some function ${f_\alpha\colon[0,1]\to[0,1]}$. DiFranco \cite{DiFranco} found that \[f_\alpha(x)=\begin{cases}
    \frac{\alpha}{x+\alpha} & \text{if } x\leq 1-\alpha; \\
    1-x & \text{if } x\geq 1-\alpha.\end{cases}\]  
However, DiFranco's argument does not describe the density of the points above the bounding curve. We call the associated permutons the \dfn{standard bubble-sort permutons}. We will give a new proof of DiFranco's result that determines this density precisely. In fact, the following theorem is far more general because it deals with operators of the form $\tau_{\mathbf{w}(\sS)}$, where $\sS$ is an arbitrary order-convex shape. 

\begin{figure}[]
  \begin{center}
\includegraphics[height=5cm]{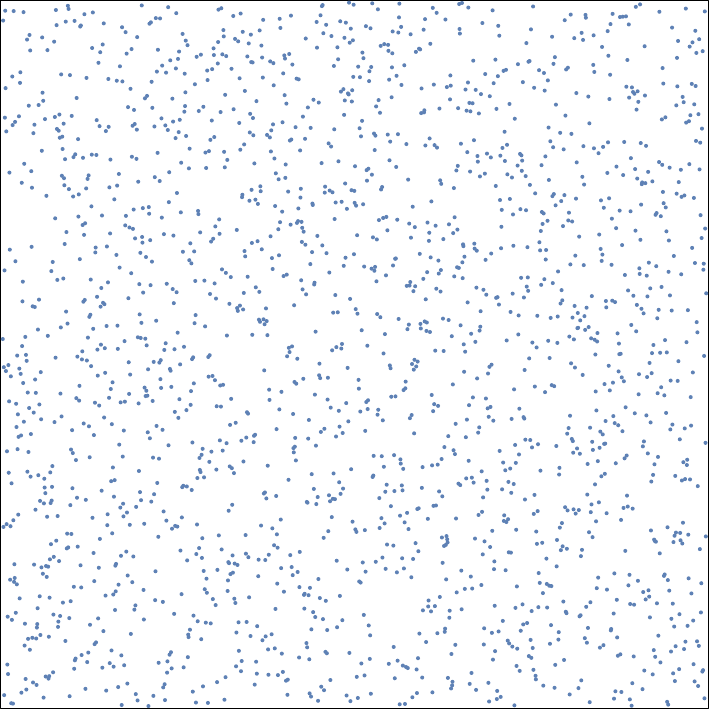}\quad\includegraphics[height=5cm]{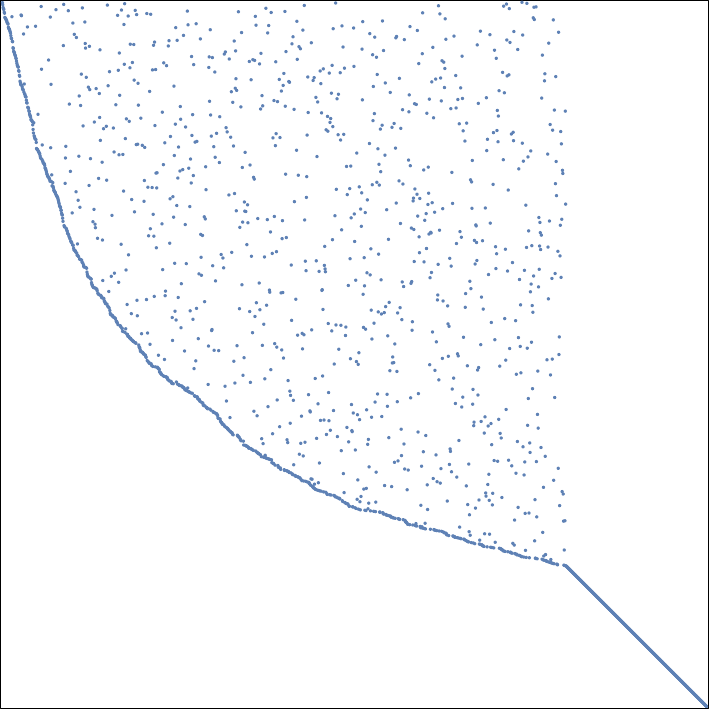}\quad\includegraphics[height=5cm]{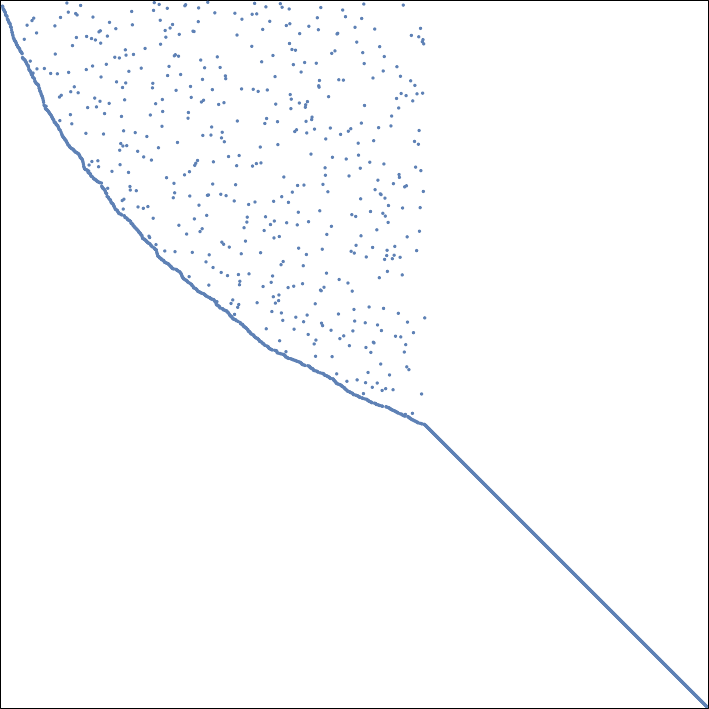} \\ \vspace{0.3cm} 
    \includegraphics[height=5cm]{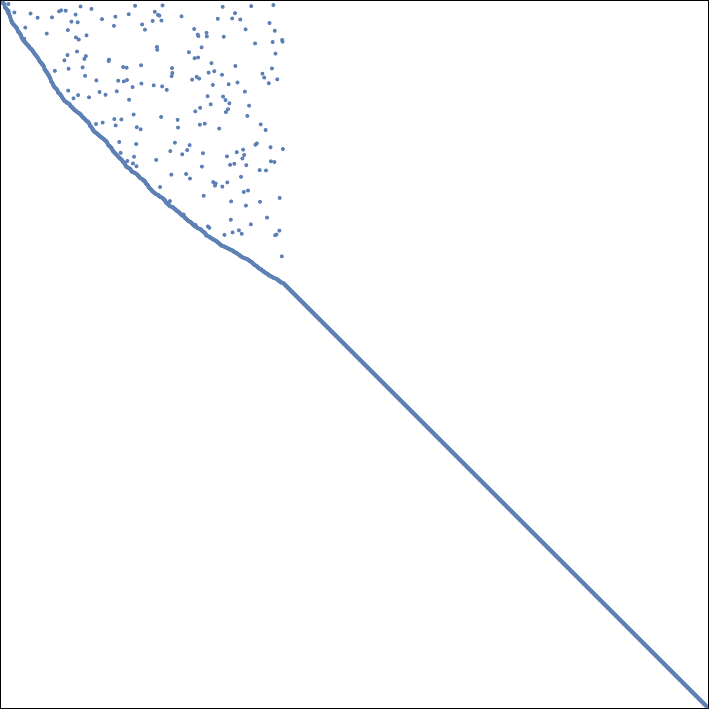}\quad\includegraphics[height=5cm]{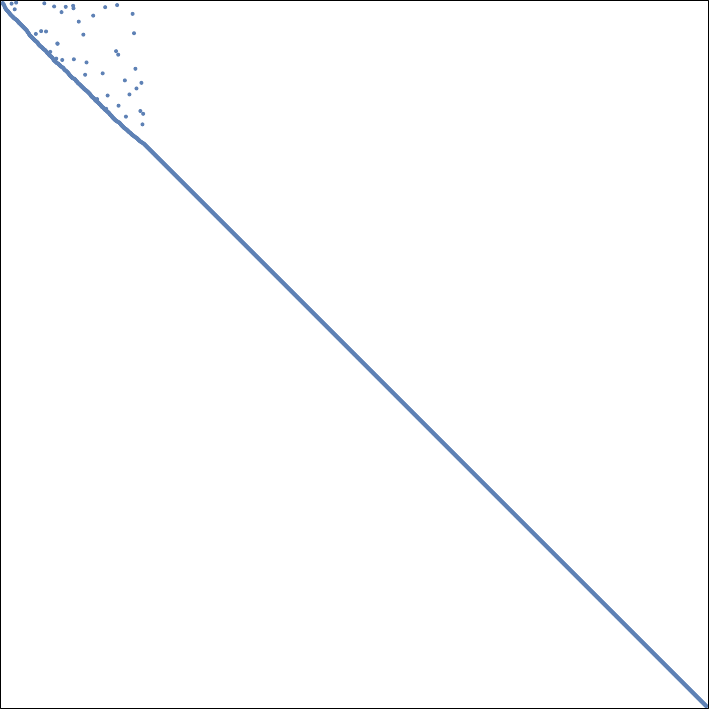}\quad\includegraphics[height=5cm]{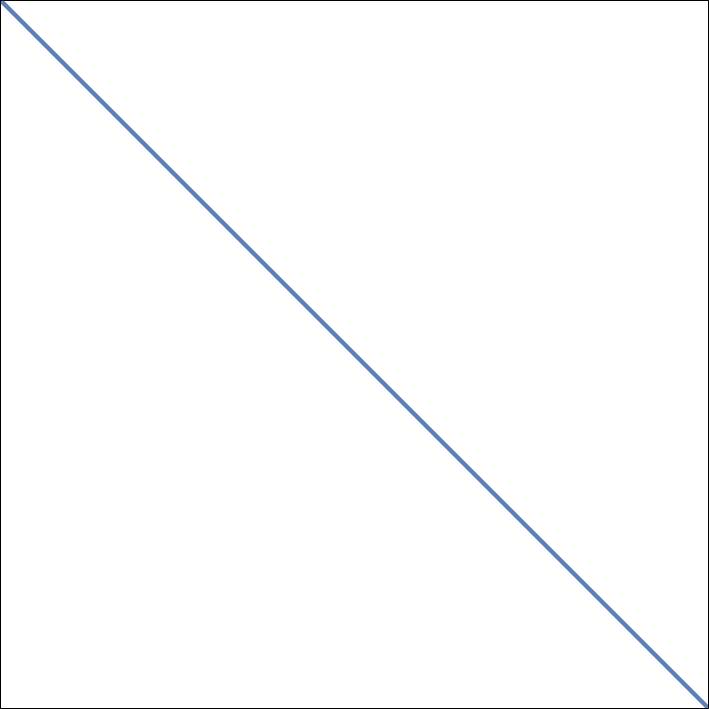} 
  \end{center}
\caption{The plots of $\tau_{(1,2,\ldots,1999)}^{400k}(u)$ for $k=0,1,2,3,4,5$, where $u$ is a uniformly random permutation in $\SSS_{2000}$.}\label{fig:BubbleSort} 
\end{figure}

\begin{theorem}\label{thm:BubbleGeneral}
Fix $\varphi,\psi\in\mathfrak B$ such that $\varphi(z)\leq\psi(z)$ for all $z\in[0,1]$. For each $n\geq 1$, choose lattice paths $\Lambda_{\swarrow}^{(n)}$ and $\Lambda_{\nearrow}^{(n)}$ in $\boldsymbol{\Lambda}_n$ such that the former lies weakly below the latter. Assume that $\theta_{\Lambda_{\swarrow}^{(n)}}$ and $\theta_{\Lambda_{\nearrow}^{(n)}}$ converge pointwise to $\varphi$ and $\psi$, respectively. Let $v_n\in\SSS_n$, and assume $(\pi_{v_n})_{n\geq 1}$ converges weakly to a permuton $\mu$. Let $u_n=\tau_{\bw(\sS^{(n)})}(v_n)$, where $\sS^{(n)}=\sS(\Lambda_{\swarrow}^{(n)},\Lambda_{\nearrow}^{(n)})$. For each ${(\xx,\yy)\in[0,1]^2}$, we have \[\lim_{n\to\infty}\h_{u_n}(\xx,\yy)=\min_{0\leq\gamma\leq 1}\left(\h_\mu(\gamma,y)+\hh_1^{\varphi,\psi}(\xx,\gamma)\right)\] with probability $1$. In particular, if $(v_n)_{n\geq 1}$ is a sequence of independent permutations such that $v_n$ is chosen uniformly at random from $\SSS_n$, then \[\lim_{n\to\infty}\h_{u_n}(\xx,\yy)=\min_{0\leq\gamma\leq 1}\left((1-\gamma)y+\hh_1^{\varphi,\psi}(\xx,\gamma)\right)\] with probability $1$.
\end{theorem}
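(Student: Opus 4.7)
The starting observation is that the operators $\tau_i$ are exactly right Demazure multiplications: $\tau_i(u)=u\star s_i$ for every $u\in\SSS_n$ and $i\in[n-1]$, directly from \eqref{eq:tau} and the definition of $\star$. Associativity of $\star$ then gives, by a one-line induction on word length,
\[
\tau_{(i_1,\ldots,i_r)}(v)=v\star s_{i_1}\star\cdots\star s_{i_r}.
\]
Specializing to the word $\bw(\sS^{(n)})$, and noting that its Demazure product equals $\Delta_1(\sS^{(n)})$ (since at $p=1$ nothing is deleted), we conclude
\[
u_n=v_n\star\Delta_1(\sS^{(n)}).
\]
The theorem thus reduces to computing the permuton limit of a right-Demazure translate of $v_n$.

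Next I would invoke the Demazure product on permutons set up earlier in the paper via the Chan--Pflueger min-plus formulation, whose action on height functions is
\[
\h_{\mu_1\star\mu_2}(x,y)=\min_{0\le\gamma\le 1}\bigl(\h_{\mu_1}(\gamma,y)+\h_{\mu_2}(x,\gamma)\bigr).
\]
Applying this identity to $\pi_{v_n}$ and $\pi_{\Delta_1(\sS^{(n)})}$ expresses $\h_{u_n}$ as exactly such a min-plus convolution. By \cref{thm:NewMPPY} at $p=1$ (where the statement is deterministic), $\h_{\Delta_1(\sS^{(n)})}\to\hh_1^{\varphi,\psi}$ pointwise, and by hypothesis $\h_{v_n}\to\h_\mu$ pointwise (almost surely, in case $v_n$ is itself random).

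The only genuinely analytic step is interchanging $\lim_n$ with $\min_{\gamma\in[0,1]}$. Since every permuton height function is $1$-Lipschitz in each coordinate---an immediate consequence of the uniform marginal condition---both sequences of height functions are equicontinuous and uniformly bounded on the compact square $[0,1]^2$. Arzel\`a--Ascoli therefore promotes pointwise convergence to uniform convergence, which lets the minimum pass through and yields the first displayed formula of the theorem with probability $1$.

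For the final assertion, a uniformly random $v_n\in\SSS_n$ satisfies $\pi_{v_n}\to\mu_{\mathrm{unif}}$ almost surely, where $\mu_{\mathrm{unif}}$ is Lebesgue measure on $[0,1]^2$: for each fixed axis-aligned rectangle the empirical mass is a sum of indicators that concentrates on its expectation by the law of large numbers, and a Borel--Cantelli argument over a countable determining family of rectangles upgrades this to almost sure weak convergence of permutons. Since $\h_{\mu_{\mathrm{unif}}}(\gamma,y)=(1-\gamma)y$, substituting into the general formula yields the claim. The main obstacle throughout, I expect, is the equicontinuity/limit-interchange step; everything else reduces to bookkeeping once the min-plus description of the permuton Demazure product is in hand.
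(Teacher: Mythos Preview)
Your proposal is correct and follows essentially the same route as the paper: rewrite $u_n=v_n\star\Delta_1(\sS^{(n)})$, invoke \cref{thm:NewMPPY} at $p=1$, and pass to the limit via the min-plus permuton Demazure product. The only difference is packaging---the paper bundles your equicontinuity/limit-interchange step into \cref{prop:Demazure_permuton} and cites it, whereas you reprove that step inline via Arzel\`a--Ascoli, and the paper simply asserts that uniformly random permutations converge to the uniform permuton rather than sketching the Borel--Cantelli argument.
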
 

To illustrate \cref{thm:BubbleGeneral}, let $\mathbf{c}_n$ be a word over $[n-1]$ that uses each letter exactly once. Such a word is called a \dfn{Coxeter word}. One can think of $\tau_{\mathbf{c}_n}$ as a ``permuted bubble-sort map'' because it applies the operators $\tau_1,\ldots,\tau_{n-1}$ in a permuted order. As illustrated in \cref{fig:Coxeter_word}, the word $\mathbf{c}_n$ can be represented (modulo commutation equivalence) by a lattice path $\Lambda_{\swarrow}^{(n)}\in\boldsymbol{\Lambda}_n$ that starts at $(0,0)$, begins with a south step, and ends with an east step. Let $\Lambda_{\nearrow}^{(n)}$ be the lattice path obtained from $\Lambda_{\swarrow}^{(n)}$ by changing the first step to an east step, changing the last step to a south step, and then translating $\left\lfloor\alpha n\right\rfloor-1$ steps north and $\left\lfloor\alpha n\right\rfloor-1$ steps east. Fix a real number $\alpha>0$, and consider the operator $\tau_{\mathbf{c}_n}^{\left\lfloor\alpha n\right\rfloor}$. This operator is equal to $\tau_{\bw(\sS^{(n)})}$, where $\sS^{(n)}$ is the order-convex shape whose southwest boundary is $\Lambda_{\swarrow}^{(n)}$ and whose northeast boundary is $\Lambda_{\nearrow}^{(n)}$. Suppose the functions $\theta_{\Lambda_{\swarrow}^{(n)}}$ converge pointwise to some function $\varphi\in\mathfrak B$ as $n\to\infty$. Then the functions $\theta_{\Lambda_{\nearrow}^{(n)}}$ converge pointwise to the function $\psi\in\mathfrak B$ defined by $\psi(z)=\varphi(z)+\alpha$. Let $v_n$ be a uniformly random permutation in $\SSS_n$. In this case, \cref{thm:BubbleGeneral} guarantees that the permutons associated to the permutations $\tau_{\mathbf{c}_n}^{\left\lfloor\alpha n\right\rfloor}(v_n)$ converge to a limiting permuton that can, in theory, be computed from $\varphi$. In practice, explicitly computing this limiting permuton can be cumbersome, so we will only do so in some special cases. 

\begin{figure}[]
  \begin{center}
  \includegraphics[height=5.714cm]{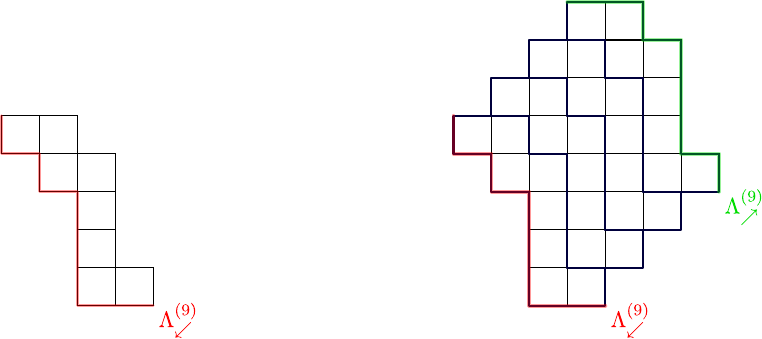}
  \end{center}
\caption{On the left is a shape representing the commutation class of the Coxeter word ${\bf c}_9=(1,3,2,7,6,5,4,8)$ for $\SSS_9$. This shape is uniquely determined by the lattice path $\Lambda_{\swarrow}^{(9)}$. On the right is the shape $\sS^{(9)}$ such that $\tau_{{\bf w}(\sS^{(9)})}=\tau_{{\bf c}_9}^4$.}\label{fig:Coxeter_word} 
\end{figure}

If our Coxeter word $\mathbf{c}_n$ is $(1,2,\ldots,n-1)$, then $\tau_{\mathbf{c}_n}$ is the classical bubble-sort map, and the limiting function $\varphi$ is given by $\varphi(z)=z$ for all $z\in[0,1]$. Another natural choice for $\mathbf{c}_n$ is the \dfn{bipartite Coxeter word} $(1,3,5,\ldots,2\left\lfloor n/2\right\rfloor-1,2,4,6,\ldots,2\left\lceil n/2\right\rceil-2)$, which lists the odd elements of $[n-1]$ before the even ones. With this choice, the limiting function $\varphi$ is given by $\varphi(z)=z/2$. 

The next theorem provides a one-parameter family of examples that includes the two mentioned in the preceding paragraph. To state it, consider $\alpha>0$ and $\beta\in[0,1]$; we will define a particular permuton $\nu^{\alpha,\beta}$. First, define curves 
\[
\mathscr{C}_{\searrow}^{\alpha,\beta}=\left\{\left(x,1-\frac{\alpha}{(1-\beta)(1-x)+\alpha}\right):\max\left\{\frac{\alpha}{1-\beta},1-\frac{\alpha}{\beta}\right\}\leq x\leq 1\right\}
\] and 
\[
\mathscr{C}_{\nwarrow}^{\alpha,\beta}=\left\{\left(x,\frac{\alpha}{\beta x+\alpha}\right):0\leq x\leq \min\left\{\frac{\alpha}{1-\beta},1-\frac{\alpha}{\beta}\right\}\right\}.
\] 
Next, define 
\begin{equation}\label{eq:UU1}
\mathscr{U}_{\searrow}^{\alpha,\beta}=\left\{\left(x,y\right)\in\left[\frac{\alpha}{1-\beta},1\right]\times\left[0,\min\left\{1-\beta,1-\frac{\alpha}{1-\beta}\right\}\right]:y\leq 1-\frac{\alpha}{(1-\beta)(1-x)+\alpha}\right\}
\end{equation} and 
\begin{equation}\label{eq:UU2}
\mathscr{U}_{\nearrow}^{\alpha,\beta}=\left\{\left(x,y\right)\in\left[0,1-\frac{\alpha}{\beta}\right]\times\left[\max\left\{1-\beta,\frac{\alpha}{\beta}\right\},1\right]:y\geq \frac{\alpha}{\beta x+\alpha}\right\}.
\end{equation} 
Consider also the (possibly empty) line segment 
\begin{equation}\label{eq:LL}
\mathscr{L}^{\alpha,\beta}=\left\{(x,1-x)\in[0,1]^2:1-\frac{\alpha}{\beta}\leq x\leq\frac{\alpha}{1-\beta}\right\}.
\end{equation} (In all of these definitions, we interpret $\alpha/0$ as $\infty$.) See the top of \cref{fig:c-Bubble} for illustrations of these curves and regions. Finally, define the permuton $\nu^{\alpha,\beta}$ so that 
\begin{align*}
\nu^{\alpha,\beta}(A)\!=\!\iint_{A\cap\left(\mathscr{U}_{\searrow}^{\alpha,\beta}\cup\mathscr{U}_{\nwarrow}^{\alpha,\beta}\right)}\!\mathrm{d}x\,\mathrm{d}y +\!\int_{A\cap\mathscr{C}_{\searrow}^{\alpha,\beta}}\!\left(x-\frac{\alpha}{1-\beta}\right)\!\mathrm{d}x+\!\int_{A\cap\mathscr{C}_{\nwarrow}^{\alpha,\beta}}\!\left(1-\frac{\alpha}{\beta}-x\right)\!\mathrm{d}x+\!\int_{A\cap\mathscr{L}^{\alpha,\beta}}\!\mathrm{d}x
\end{align*}
for every measurable set $A\subseteq[0,1]^2$. 

\begin{figure}[]
   \begin{center}
  \includegraphics[height=5cm]{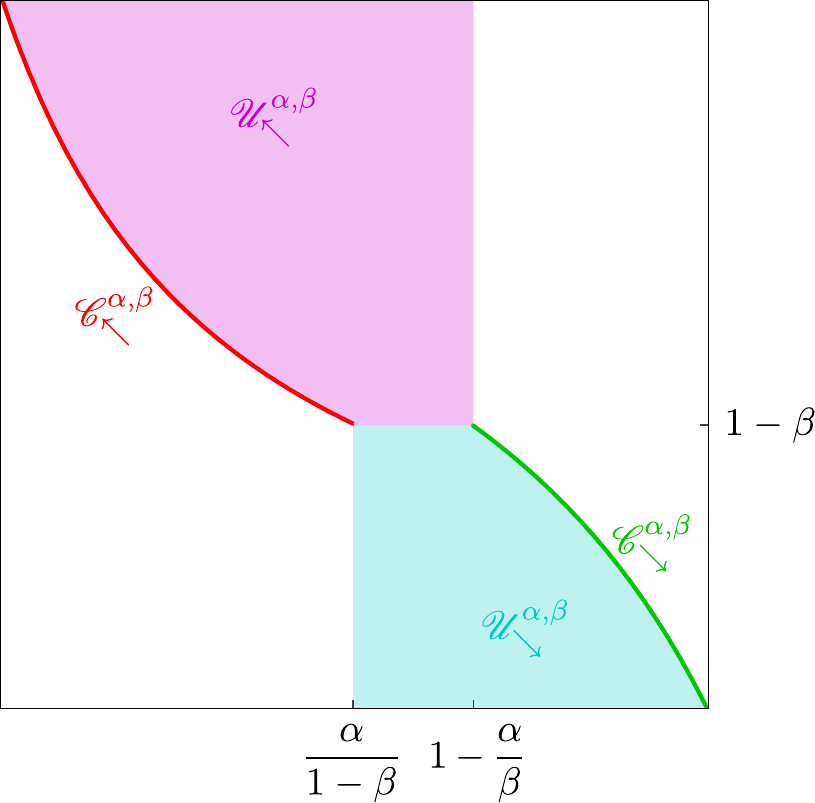}\quad\includegraphics[height=5cm]{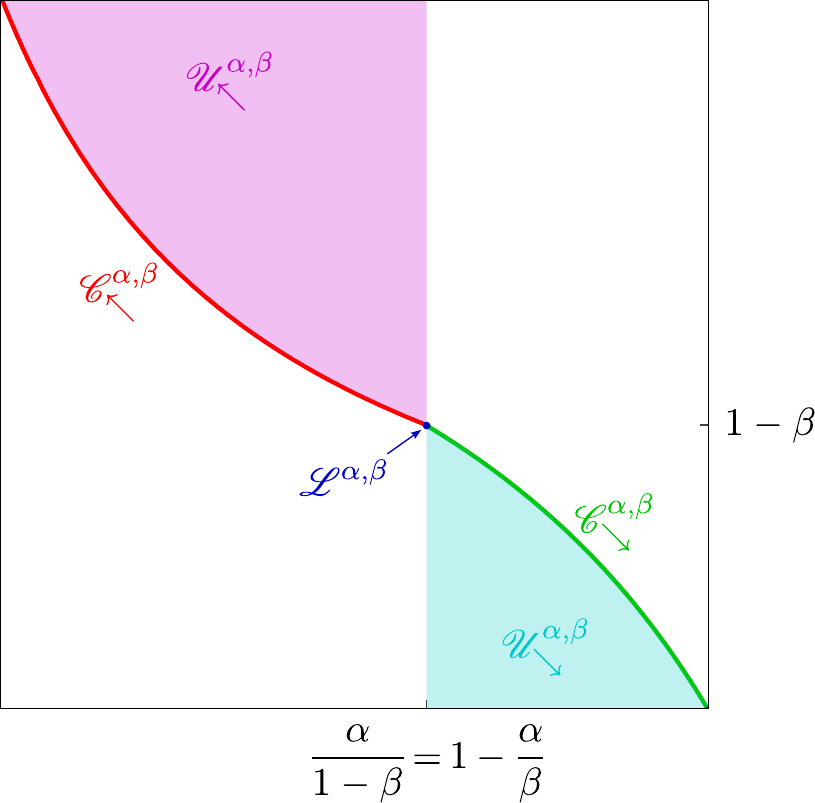}\quad\includegraphics[height=5cm]{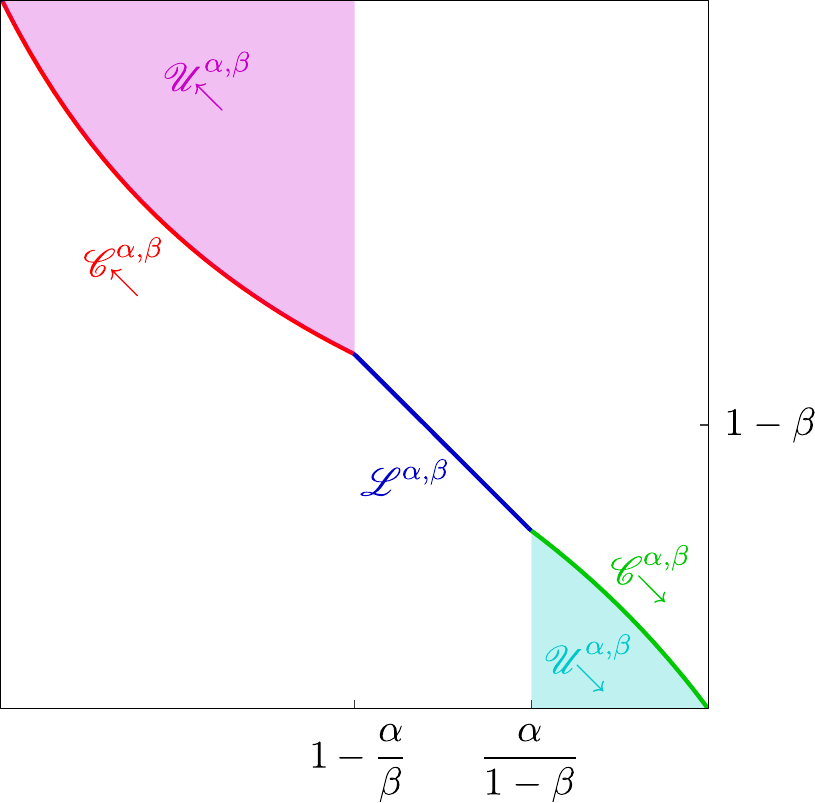} \\ \vspace{0.3cm} 
    \includegraphics[height=5cm]{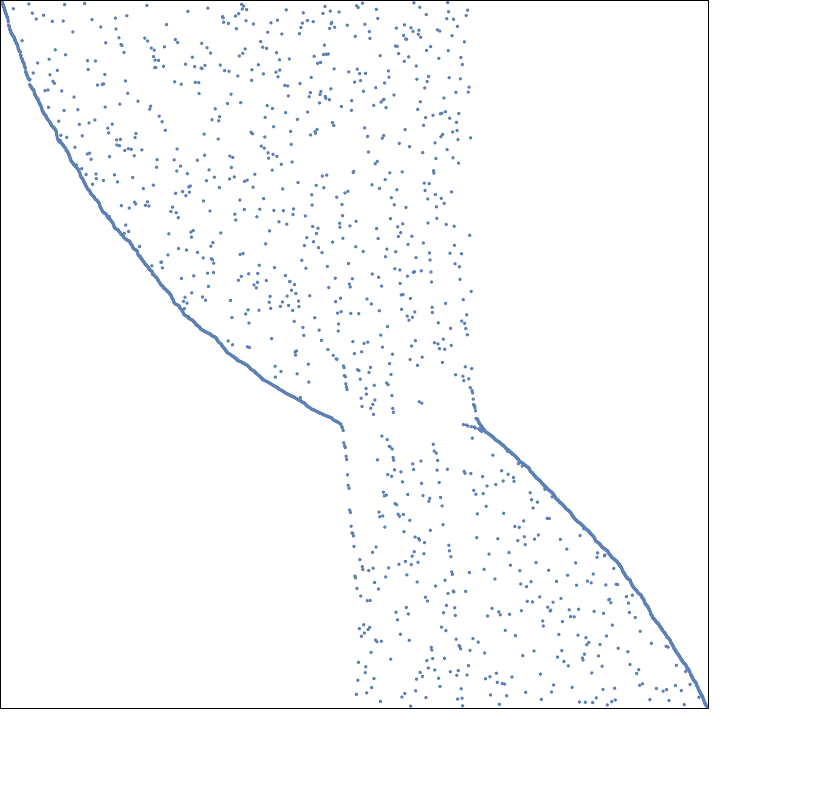}\quad\includegraphics[height=5cm]{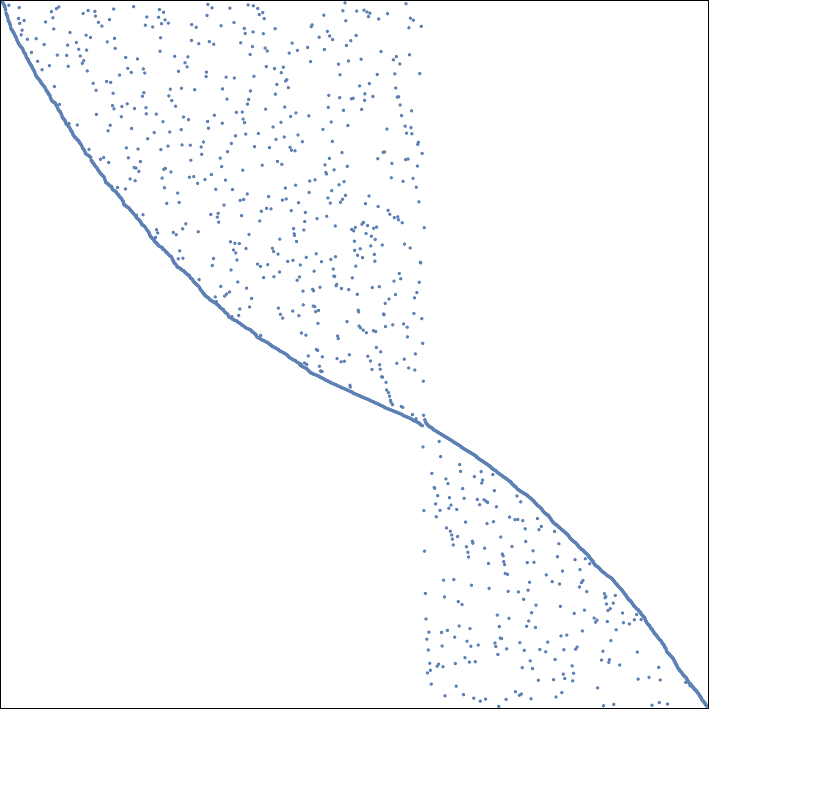}\quad\includegraphics[height=5cm]{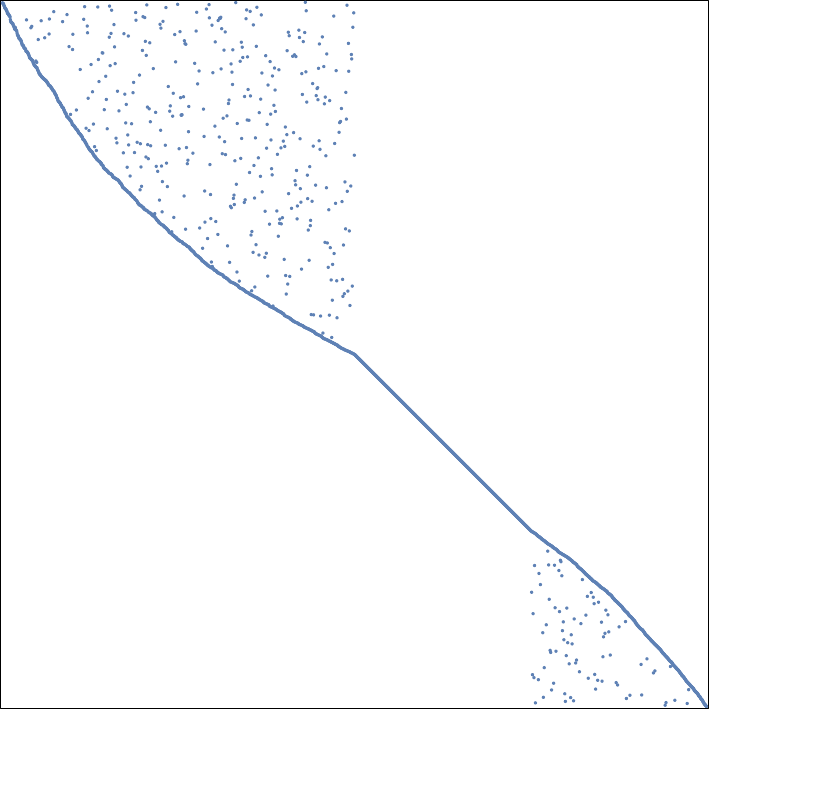} 
  \end{center}
\caption{On top are illustrations of the permutons $\nu^{\alpha,\beta}$ with $\beta=0.6$ and with $\alpha=0.2$ (left), $\alpha=0.24$ (middle), and $\alpha=0.3$ (right). Directly below each permuton is a plot of a random permutation $u_{2400}\in\SSS_{2400}$, as defined in \cref{thm:c-Bubble_Line} with the same values of $\alpha$ and $\beta$. }\label{fig:c-Bubble} 
\end{figure}

\begin{theorem}\label{thm:c-Bubble_Line} 
Fix real numbers $\alpha>0$ and $\beta\in[0,1]$. For each $n\geq 1$, let ${\bf c}_n$ be a Coxeter word over $[n-1]$ corresponding to a lattice path $\Lambda_{\swarrow}^{(n)}\in\boldsymbol{\Lambda}_n$. Suppose $\theta_{\Lambda_{\swarrow}^{(n)}}$ converges pointwise to the function $\varphi\in\mathfrak B$ given by $\varphi(z)=\beta z$. Let $v_n$ be a permutation chosen uniformly at random from $\SSS_n$, and let $u_n=\tau_{{\bf c}_n}^{\left\lfloor\alpha n\right\rfloor}(v_n)$. The sequence $(\pi_{u_n})_{n\geq 1}$ of permutons converges weakly to $\nu^{\alpha,\beta}$.  
\end{theorem}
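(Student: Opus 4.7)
The strategy is to invoke \cref{thm:BubbleGeneral} with the sequence of shapes $\sS^{(n)}$ constructed in the paragraph preceding this theorem, and then reduce the resulting infimum to the explicit height function of $\nu^{\alpha,\beta}$. Since $\tau_{\mathbf{c}_n}^{\lfloor\alpha n\rfloor}=\tau_{\bw(\sS^{(n)})}$ and $\theta_{\Lambda_{\nearrow}^{(n)}}$ converges pointwise to $\psi(z)=\beta z+\alpha$, \cref{thm:BubbleGeneral} with $\varphi(z)=\beta z$ gives, with probability $1$,
\[
\lim_{n\to\infty}\h_{u_n}(x,y)=\h(x,y)\coloneq\min_{0\le\gamma\le 1}\left((1-\gamma)y+\hh_1^{\varphi,\psi}(x,\gamma)\right)
\]
for every $(x,y)\in[0,1]^2$. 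The task thus reduces to computing $\h$ and identifying it as the height function of $\nu^{\alpha,\beta}$.

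Setting $p=1$ in \eqref{eq:f} kills the square root and produces $\ff_1^{\varphi,\psi}(x,\gamma)=\psi(x)-\varphi(\gamma)+\gamma-x=(1-\beta)(\gamma-x)+\alpha$, so
\[
\hh_1^{\varphi,\psi}(x,\gamma)=\min\Big\{\max\{0,\gamma-x,(1-\beta)(\gamma-x)+\alpha\},\,1-x,\,\gamma\Big\}.
\]
For fixed $x\in[0,1]$, the inner maximum is piecewise linear in $\gamma$ with slopes $0,1-\beta,1$ on the (possibly empty) intervals $\gamma-x\le-\alpha/(1-\beta)$, $-\alpha/(1-\beta)\le\gamma-x\le\alpha/\beta$, and $\gamma-x\ge\alpha/\beta$, so $\hh_1^{\varphi,\psi}(x,\,\cdot\,)$ is piecewise linear in $\gamma$ with slopes drawn from $\{0,1-\beta,1\}$. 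Consequently $G(\gamma)\coloneq(1-\gamma)y+\hh_1^{\varphi,\psi}(x,\gamma)$ has slopes drawn from $\{-y,\,1-\beta-y,\,1-y\}$, and its minimum is attained at a breakpoint or endpoint. A case analysis, split according to the sign of $y-(1-\beta)$ and to which of the caps $\gamma$ and $1-x$ is active, yields an explicit piecewise formula for $\h(x,y)$ with distinct expressions on $\mathscr{U}_\searrow^{\alpha,\beta}$, on $\mathscr{U}_\nwarrow^{\alpha,\beta}$, on the complementary strip that separates them, and on the two shadow regions where a boundary cap binds.

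It then remains to verify $\h=\h_{\nu^{\alpha,\beta}}$. On the open regions $\mathscr{U}_\searrow^{\alpha,\beta}$ and $\mathscr{U}_\nwarrow^{\alpha,\beta}$, the formula for $\h$ is quadratic and satisfies $-\partial^2\h/(\partial x\,\partial y)=1$, reproducing the Lebesgue part of $\nu^{\alpha,\beta}$. Across the boundary curves $\mathscr{C}_\searrow^{\alpha,\beta}$ and $\mathscr{C}_\nwarrow^{\alpha,\beta}$, the gradient of $\h$ has a prescribed jump whose magnitude, computed from the two neighboring pieces, matches the linear singular densities $x-\alpha/(1-\beta)$ and $1-\alpha/\beta-x$ in the definition of $\nu^{\alpha,\beta}$. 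When $\mathscr{L}^{\alpha,\beta}$ is nonempty, an analogous crease of $\h$ along the anti-diagonal produces the uniform singular mass $\int_{A\cap\mathscr{L}^{\alpha,\beta}}\mathrm{d}x$. Confirming the boundary values $\h(0,y)=y$, $\h(x,0)=0$, $\h(1,y)=0$, and $\h(x,1)=1-x$ then identifies $\h$ with $\h_{\nu^{\alpha,\beta}}$, from which weak convergence $\pi_{u_n}\to\nu^{\alpha,\beta}$ follows.

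The main obstacle is the case analysis in the minimization: several regimes arise from the three slopes of $\hh_1^{\varphi,\psi}(x,\,\cdot\,)$ interacting with the two caps $\gamma$ and $1-x$ and with the orderings among $\alpha/\beta$, $\alpha/(1-\beta)$, $1-x$, and $y$. Keeping track of how these pieces fit together, particularly in the degenerate cases $\beta\in\{0,1\}$ and in the regime where $\mathscr{L}^{\alpha,\beta}$ is nonempty so that $\mathscr{U}_\searrow^{\alpha,\beta}$ and $\mathscr{U}_\nwarrow^{\alpha,\beta}$ are disjoint, is the bookkeeping-heavy core of the argument, though each individual case reduces to a short computation.
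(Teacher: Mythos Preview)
Your approach is correct and follows the same overall strategy as the paper: invoke \cref{thm:BubbleGeneral} to reduce to computing $\min_{0\le\gamma\le1}\bigl((1-\gamma)y+\hh_1^{\varphi,\psi}(x,\gamma)\bigr)$, then analyze this minimum via the piecewise-linear structure of $\hh_1^{\varphi,\psi}(x,\,\cdot\,)$. There are, however, two places where the paper organizes the computation more economically. First, the paper exploits the $180^\circ$-rotation symmetry $\nu^{\alpha,1-\beta}=\widehat{\nu^{\alpha,\beta}}$ together with $\upsilon\star\zeta_1^{\widehat\DD}=\widehat{\upsilon\star\zeta_1^{\DD}}$ to reduce to the case $y\le 1-\beta$; once $y\le 1-\beta$, all slopes of $\hh_1(x,\,\cdot\,)$ on the region where it is positive are at least $1-\beta\ge y$, so $Q(\gamma)$ is decreasing, then non-decreasing, then decreasing, and the minimum is attained either at the single breakpoint $q$ where $\hh_1$ first becomes positive or at $\gamma=1$. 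This collapses your three-slope case analysis into one short observation. Second, rather than reconstructing the measure from mixed partials and gradient jumps across $\mathscr C_\searrow^{\alpha,\beta}$, $\mathscr C_\nwarrow^{\alpha,\beta}$, and $\mathscr L^{\alpha,\beta}$, the paper computes $\h_{\nu^{\alpha,\beta}}(x,y)$ directly from the definition of $\nu^{\alpha,\beta}$ and checks that it equals $\min\{(1-q)y,1-x\}$; this sidesteps the delicate tracking of singular parts that you flag as the bookkeeping-heavy core.
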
 

\cref{thm:c-Bubble_Line} is illustrated in \cref{fig:BubbleSort} for $\beta=1$ and in \cref{fig:c-Bubble} for $\beta=0.6$. 

For our next concrete application of \cref{thm:BubbleGeneral}, we consider the case where the scaling limit region $\DD$ is a rectangle. 

\begin{theorem}\label{thm:rectangle} 
Fix $\beta\in[0,1]$, and let $\sS^{(n)}$ be a rectangle shape of height $\left\lfloor(1-\beta)n\right\rfloor$ and width $\left\lceil\beta n\right\rceil$. Let $v_n$ be a permutation chosen uniformly at random from $\SSS_n$, and let $u_n=\tau_{{\bf w}(\sS^{(n)})}(v_n)$. The sequence $(\pi_{u_n})_{n\geq 1}$ of permutons converges weakly to $\nu^{\beta(1-\beta),\beta}$. 
\end{theorem}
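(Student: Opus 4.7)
The plan is to deduce \cref{thm:rectangle} directly from \cref{thm:BubbleGeneral}. First, I would identify the limiting boundary functions for the rectangle shape. With the natural choice of $\Lambda_\swarrow^{(n)}$ consisting of $\lfloor(1-\beta)n\rfloor$ south steps followed by $\lceil\beta n\rceil$ east steps (and $\Lambda_\nearrow^{(n)}$ reversing this order), a direct computation shows that $\theta_{\Lambda_\swarrow^{(n)}}\to\varphi$ and $\theta_{\Lambda_\nearrow^{(n)}}\to\psi$ pointwise, where
\[
\varphi(z)=\max\{0,\,z-(1-\beta)\} \qquad\text{and}\qquad \psi(z)=\min\{z,\,\beta\}.
\]
Since the $v_n$ are uniform, the second assertion of \cref{thm:BubbleGeneral} then yields
\[
\h_{u_n}(x,y)\to H(x,y)\coloneq\min_{0\le\gamma\le 1}\bigl[(1-\gamma)y+\hh_1^{\varphi,\psi}(x,\gamma)\bigr]
\]
almost surely at every $(x,y)\in[0,1]^2$, so it suffices to verify that $H(x,y)=\h_{\nu^{\beta(1-\beta),\beta}}(x,y)$.

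The next step is to compute $\hh_1^{\varphi,\psi}$ explicitly. Setting $p=1$ in \eqref{eq:f} kills the square-root term, so $\ff_1^{\varphi,\psi}(x,\gamma)=\psi(x)-\varphi(\gamma)+\gamma-x$. Substituting the piecewise formulas for $\varphi,\psi$ and applying the truncations in \eqref{eq:h} makes $\hh_1^{\varphi,\psi}(x,\gamma)$ piecewise linear in $\gamma$ for each fixed $x$, with breakpoints lying in $\{0,\,x-\beta,\,1-\beta,\,1-\beta+x,\,1\}\cap[0,1]$.

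I would then minimize $G_{x,y}(\gamma)\coloneq(1-\gamma)y+\hh_1^{\varphi,\psi}(x,\gamma)$ over $\gamma\in[0,1]$. Because $G_{x,y}$ is piecewise linear in $\gamma$, its minimum is attained at an endpoint of $[0,1]$ or at a breakpoint; carrying out the case analysis produces an explicit piecewise formula for $H$. Using $\alpha=\beta(1-\beta)$, which gives $\alpha/(1-\beta)=\beta$ and $1-\alpha/\beta=\beta$, one sees that the transitions between cases occur exactly along the curves
\[
\Bigl\{\bigl(x,\,1-\tfrac{\beta}{1-x+\beta}\bigr):\beta\le x\le 1\Bigr\}=\mathscr{C}_\searrow^{\beta(1-\beta),\beta}
\quad\text{and}\quad
\Bigl\{\bigl(x,\,\tfrac{1-\beta}{x+1-\beta}\bigr):0\le x\le\beta\Bigr\}=\mathscr{C}_\nwarrow^{\beta(1-\beta),\beta}.
\]
Note that with $\alpha=\beta(1-\beta)$ the segment $\mathscr{L}^{\alpha,\beta}$ collapses to the single point $(\beta,1-\beta)$ where these two curves meet.

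Finally, I would verify $H=\h_{\nu^{\beta(1-\beta),\beta}}$ by computing $\nu^{\beta(1-\beta),\beta}([x,1]\times[0,y])$ directly from the definition of $\nu^{\alpha,\beta}$ in each region cut out by $\mathscr{C}_\searrow^{\beta(1-\beta),\beta}\cup\mathscr{C}_\nwarrow^{\beta(1-\beta),\beta}$. The linear weights $x-\alpha/(1-\beta)$ and $1-\alpha/\beta-x$ on the two curves simplify to $x-\beta$ and $\beta-x$, and the bulk regions $\mathscr{U}_\searrow^{\alpha,\beta}$ and $\mathscr{U}_\nwarrow^{\alpha,\beta}$ contribute unit Lebesgue density. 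The hard part will be the bookkeeping in this final comparison: I need the kinks of $H$ (where $\nabla H$ has a jump) to align exactly with $\mathscr{C}_\searrow^{\beta(1-\beta),\beta}\cup\mathscr{C}_\nwarrow^{\beta(1-\beta),\beta}$, so that the singular parts of $\nu^{\beta(1-\beta),\beta}$ correspond precisely to the gradient discontinuities of $H$ forced by the variational formula, while the mixed second derivative $-\partial^2 H/\partial x\,\partial y$ must equal $1$ in the interiors of the bulk regions and $0$ outside them.
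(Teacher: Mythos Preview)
Your plan is correct and follows essentially the same route as the paper: apply \cref{thm:BubbleGeneral} with these $\varphi,\psi$, compute $\hh_1^{\varphi,\psi}(x,\gamma)$ as a piecewise-linear function of $\gamma$, minimize $(1-\gamma)y+\hh_1^{\varphi,\psi}(x,\gamma)$ over its breakpoints, and compare the result with $\h_{\nu^{\beta(1-\beta),\beta}}$.

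Two execution differences are worth noting. First, the paper invokes the $180^\circ$ symmetry $\widehat{\upsilon\star\zeta_1^{\Rect^\beta}}=\upsilon\star\zeta_1^{\Rect^{1-\beta}}$ together with $\widehat{\nu^{\beta(1-\beta),\beta}}=\nu^{\beta(1-\beta),1-\beta}$ to reduce to the case $x\le\beta$; this halves the casework and lets the entire computation fit in a few lines (only one nontrivial breakpoint, $\gamma=x+1-\beta$, appears). Second, for the final comparison the paper simply writes down $\h_{\nu^{\beta(1-\beta),\beta}}(x,y)=\min\{y,\,(\beta-x)y+1-\beta\}$ for $x\le\beta$ and checks it equals $\min\{Q(0),Q(x+1-\beta)\}$. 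Your proposed verification via gradient jumps and mixed second partials would also work, but it is more laborious than directly equating the two piecewise formulas for the height function.
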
 

\begin{remark}\label{rem:doppelgangers}  
It is surprising that the permuton limit in \cref{thm:c-Bubble_Line} is the same as the permuton limit in \cref{thm:rectangle} when $\alpha=\beta(1-\beta)$. When $\sS^{(n)}$ is as in \cref{thm:rectangle}, the limit of the scaled shapes $\frac{1}{n}\sS^{(n)}$ is an axis-parallel rectangle of height $1-\beta$ and width $\beta$. On the other hand, the scaling limit of the shapes involved in \cref{thm:c-Bubble_Line} is a parallelogram with two sides of slope $1$ and length $\beta(1-\beta)\sqrt{2}$ and two other sides of slope $-\frac{1-\beta}{\beta}$. It is not clear why there should be any relationship between this rectangle and this parallelogram.  
\end{remark} 


\cref{thm:rectangle} has the following interpretation as a model of memory (in \cite{AES}, Alon, Elboim, and Sly studied a related model). Fix $\beta\in(0,1)$, and let $n$ be a large positive integer. Let $k=\left\lfloor (1-\beta) n\right \rfloor$. Dory is capable of storing exactly $k$ facts in her mind. Each fact has some \dfn{relevance factor}, which is a number in $[0,1]$ that indicates how relevant it is to Dory's life. On Day $0$, Dory knows $k$ facts, whose relevance factors are independent uniformly random elements of $[0,1]$. On each subsequent day, Dory learns a new fact whose relevance factor is a uniformly random element of $[0,1]$ that is independent of all other facts she has ever known; she then forgets the least relevant fact she knows. Let $r_j$ denote the relevance factor of the fact that Dory forgets on Day $j$. The collection of points $(1-(i-1)/n,r_i)$ for $1\leq i\leq n-k$ closely approximates the plot of the random permutation $u_n$ from \cref{thm:rectangle}, restricted to $[1-\beta,1]\times [0,1]$. We will make this more precise in \cref{subsec:diary}.  

\subsection{Demazure Products of Permutons} 

The method that we use to prove \cref{thm:BubbleGeneral} relies on an alternative formulation of the Demazure product via matrix multiplication in the min-plus tropical semiring. This formulation, due to Chan and Pflueger \cite{CP}, naturally leads us to introduce the Demazure product $\mu\star\nu$ of two permutons $\mu$ and $\nu$, which is defined so that 
\begin{equation}\label{eq:intro_h}
\h_{\mu\star\nu}(x,y)=\min_{0\leq \gamma\leq 1}\left(\h_{\mu}(\gamma,y)+\h_{\nu}(x,\gamma)\right)
\end{equation} for all $(x,y)\in[0,1]^2$. Suppose $(u_n)_{n\geq 1}$ and $(v_n)_{n\geq 1}$ are sequences of permutations with $u_n,v_n\in\SSS_n$ for all $n$. In \cref{prop:Demazure_permuton}, we prove that if $\pi_{u_n}$ and $\pi_{v_n}$ converge to permutons $\mu$ and $\nu$, respectively, then $\pi_{u_n\star v_n}$ converges to $\mu\star\nu$. 

The Demazure product on permutons allows us to give a simple proof of the following theorem, which is not obvious if we use the original definition of the Demazure product.  

\begin{theorem}\label{thm:random_star_random}
For each $n\geq 1$, choose permutations $u_n,v_n\in\SSS_n$ independently and uniformly at random. The expected number of inversions of $u_n\star v_n$ satisfies  
\[\mathbb E[\ell(u_n\star v_n)]=\binom{n}{2}(1-o(1)).\]
\end{theorem}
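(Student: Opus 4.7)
The upper bound $\mathbb E[\ell(u_n\star v_n)]\leq\binom{n}{2}$ is immediate since $u_n\star v_n\in\SSS_n$, so the task reduces to proving the matching lower bound. The plan is to pass to the permuton limit using \cref{prop:Demazure_permuton}, which asserts that the Demazure product on permutons is continuous under weak convergence: if $\pi_{u_n}\to\mu$ and $\pi_{v_n}\to\nu$, then $\pi_{u_n\star v_n}\to\mu\star\nu$. First, I would invoke the standard fact that a uniformly random permutation in $\SSS_n$ converges almost surely (in the weak permuton topology) to the uniform permuton $\mu_{\mathrm{unif}}$ with constant density $1$; this follows from an $L^2$ concentration bound on pattern densities together with the characterization of weak convergence via pattern densities from \cite{HKMRS}. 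Since $u_n$ and $v_n$ are independent, \cref{prop:Demazure_permuton} then gives $\pi_{u_n\star v_n}\to\mu_{\mathrm{unif}}\star\mu_{\mathrm{unif}}$ almost surely.

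Next, I would compute this product of permutons directly using the height function $\h_{\mu_{\mathrm{unif}}}(x,y)=(1-x)y$ and the formula \eqref{eq:intro_h}. The integrand $(1-\gamma)y+(1-x)\gamma=y+\gamma(1-x-y)$ is affine in $\gamma$ with slope $1-x-y$, so the minimum over $\gamma\in[0,1]$ is attained at an endpoint, yielding
\[\h_{\mu_{\mathrm{unif}}\star\mu_{\mathrm{unif}}}(x,y)=\min(y,1-x).\]
This is precisely the height function of the anti-diagonal permuton $\mu_{\mathrm{anti}}$ supported on $\{(x,1-x):x\in[0,1]\}$, equivalently the weak permuton limit of the reverse permutations $w_0^{(n)}\in\SSS_n$.

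To finish, I would observe that $\ell(u_n\star v_n)/\binom{n}{2}$ equals the density of the pattern $21$ in $u_n\star v_n$. Since pattern densities are continuous on the space of permutons \cite{HKMRS}, this ratio converges almost surely to the $21$-density of $\mu_{\mathrm{anti}}$; two independent samples $(X_1,1-X_1),(X_2,1-X_2)$ from $\mu_{\mathrm{anti}}$ almost surely form a $21$-pattern (as $Y_1>Y_2$ is equivalent to $X_1<X_2$ and $\mathbb P(X_1=X_2)=0$), so this density equals $1$. Because the ratio lies in $[0,1]$, bounded convergence delivers $\mathbb E[\ell(u_n\star v_n)]/\binom{n}{2}\to 1$, which is the required lower bound. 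The main obstacle in this argument is really absorbed into \cref{prop:Demazure_permuton}: the hardest part is establishing that the Demazure product on permutons is continuous under weak convergence; once that is granted, the height-function computation and the appeal to pattern-density continuity are routine.
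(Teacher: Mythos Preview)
Your proposal is correct and follows essentially the same route as the paper: compute $\h_{\upsilon\star\upsilon}$ via \cref{prop:Demazure_permuton}, identify it as the anti-diagonal permuton, and then invoke continuity of pattern densities from \cite{HKMRS}. If anything, you are slightly more explicit than the paper in spelling out the almost-sure mode of convergence and the bounded convergence step needed to pass from $\ell(u_n\star v_n)/\binom{n}{2}\to 1$ a.s.\ to convergence of expectations.
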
 

Suppose $u$ and $v$ are independent uniformly random permutations in $\SSS_n$, and let $\mathbf{u}$ and $\mathbf{v}$ be reduced words for $u$ and $v$, respectively. To compute $u\star v$ using words, we use the relations of the $0$-Hecke monoid to transform the concatenated word $\mathbf{u}\mathbf{v}$ into a reduced word. The length of $\mathbf{u}\mathbf{v}$ is $\ell(u)+\ell(v)$, which is concentrated around $\binom{n}{2}$. Therefore, \cref{thm:random_star_random} tells us that the process of transforming $\mathbf{u}\mathbf{v}$ into a reduced word does not shorten the word by much. That is to say, $\mathbf{u}\mathbf{v}$ is already fairly close to a reduced word. It would be interesting to have more refined asymptotics for the expected value (or other distributional information) of $\ell(u\star v)$.   

The Demazure product on permutons could have further applications. Let us briefly mention one potential avenue that we will not pursue here. Fix $\alpha>0$, and let \[z_n=(\tau_{i_{\left\lfloor\alpha n\right\rfloor}}\circ\cdots\circ\tau_{i_2}\circ\tau_{i_1})(\mathrm{id}_n)=s_{i_1}\star s_{i_2}\star \cdots\star s_{i_{\left\lfloor\alpha n\right\rfloor}},\] where $i_1,i_2,\ldots,i_{\left\lfloor\alpha n\right\rfloor}$ are independent uniformly random elements of $[n-1]$ and $\mathrm{id}_n$ is the identity permutation in $\SSS_n$. The permutation $z_n$ is essentially\footnote{The oriented swap process is typically defined using continuous time, whereas we work in discrete time.} the same as a state at time $\left\lfloor\alpha n\right\rfloor$ in the \dfn{oriented swap process}, which was introduced by Angel, Holroyd, and Romik \cite{AHR} (see also \cite{BCGR,BGR,Zhang}). These authors found that $(\pi_{z_n})_{n\geq 1}$ almost surely converges weakly to a particular permuton $\lambda^\alpha$, and they provided an explicit formula for the height function $\h_{\lambda^\alpha}$. Now suppose we instead consider the oriented swap process with an initial state $v_n$ other than the identity permutation. That is, we consider $(\tau_{i_{\left\lfloor\alpha n\right\rfloor}}\circ\cdots\tau_{i_2}\circ\tau_{i_1})(v_n)=v_n\star z_n$. If we further assume that $(\pi_{v_n})_{n\geq 1}$ almost surely converges to some permuton $\mu$ (for instance, we could take $\mu$ to be the uniform measure on $[0,1]^2$), then it will follow from \cref{prop:Demazure_permuton} that $(\pi_{v_n\star z_n})_{n\geq 1}$ almost surely converges weakly to the permuton $\mu\star\lambda^{\alpha}$, whose height function can be computed directly from those of $\mu$ and $\lambda^{\alpha}$ via \eqref{eq:intro_h}. 

\subsection{Outline} 
In \cref{sec:preliminaries}, we provide additional background information about permutons. 
In \cref{sec:Demazure_permutons}, we define the Demazure product on the set of permutons and establish its basic properties (\cref{prop:Demazure_permuton}). This allows us to painlessly prove \cref{thm:random_star_random}; in fact, we will deduce this theorem from a more general result about pattern densities in the Demazure product of two large uniformly random permutations (\cref{thm:random_star_random_patterns}). \cref{sec:TASEP} discusses the TASEP with geometric jumps and proves \cref{thm:NewMPPY,thm:fluctuations}, which deal with permutons associated to Demazure products coming from random pipe dreams. \cref{sec:TASEP} also provides more thorough analyses of families of permutons arising in special cases. In \cref{sec:Bubble}, we prove \cref{thm:BubbleGeneral} and then derive \cref{thm:c-Bubble_Line,thm:rectangle}. \cref{sec:conclusion} provides ideas for future directions along with more pictures of large random permutations. 

\section{Preliminaries}\label{sec:preliminaries}  

Recall that $\pi_u$ is the permuton obtained from the plot of a permutation $u\in\SSS_n$ by changing each point into a square of side length $1/n$ and density $n$. We write $\h_u$ instead of $\h_{\pi_u}$ for the height function of $\pi_u$. It is straightforward to check that the height function of a permuton $\mu$, as defined in \eqref{eq:height}, is $\sqrt{2}$-Lipschitz and, therefore, continuous. A sequence $(\mu_n)_{n\geq 1}$ of permutons converges weakly to a permuton $\mu$ if and only if the sequence $(\h_{\mu_n})_{n\geq 1}$ converges pointwise to $\h_\mu$. Moreover, since height functions are $\sqrt{2}$-Lipschitz, the sequence $(\h_{\mu_n})_{n\geq 1}$ converges pointwise if and only if it converges uniformly.  

\begin{proposition}[\cite{HKMRS}]\label{lem:permuton_convergence}
If $\mu$ is a permuton, then there exists a sequence $(u_n)_{n\geq 1}$ with $u_n\in\SSS_n$ such that the sequence $(\h_{u_n})_{n\geq 1}$ converges pointwise to $\h_\mu$. Moreover, if $(v_n)_{n\geq 1}$ is a sequence of permutations with $v_n\in\SSS_n$ such that $(v_n)_{n\geq 1}$ converges pointwise to a function $\h\colon[0,1]^2\to[0,1]$, then $\h$ is the height function of a permuton. 
\end{proposition}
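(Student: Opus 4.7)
The plan is to handle the two assertions by separate methods: a random sampling construction for the existence direction, and a compactness argument for the limit identification.

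For the first assertion, I would draw iid samples $(X_1,Y_1),\ldots,(X_n,Y_n)$ from $\mu$. Since $\mu$ has uniform marginals, the $X_i$'s and the $Y_i$'s are each uniform on $[0,1]$ and almost surely all distinct. Let $u_n\in\SSS_n$ be the permutation defined by sorting the $X_i$'s in increasing order and recording the resulting $Y_i$-ranks. For each fixed $(x,y)\in[0,1]^2$, the strong law of large numbers gives
\[
\frac{1}{n}\,\#\{i:X_i>x,\,Y_i\leq y\}\xrightarrow{\text{a.s.}}\mu((x,1]\times[0,y])=\h_\mu(x,y).
\]
A routine combinatorial check (using that the empirical quantiles of uniform samples converge uniformly to their values at the Glivenko--Cantelli rate) shows that this empirical count differs from $\h_{u_n}(x,y)$ by a quantity tending to $0$ almost surely. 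Taking the intersection of the resulting a.s.\ events over a countable dense subset $D\subseteq[0,1]^2$ and using that $\h_\mu$ and every $\h_{u_n}$ are $\sqrt{2}$-Lipschitz, a standard three-term $\varepsilon/3$ estimate upgrades this to a.s.\ pointwise convergence everywhere on $[0,1]^2$. Any realization on this probability-one event yields the desired deterministic sequence.

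For the second assertion, I would invoke the sequential compactness of the space of Borel probability measures on the compact square $[0,1]^2$ in the weak topology. Extract a weakly convergent subsequence $\pi_{v_{n_k}}\to\nu$. Since uniform marginals are preserved under weak convergence, $\nu$ has uniform marginals and is therefore a permuton. Each rectangle $[x,1]\times[0,y]$ is a $\nu$-continuity set, because its boundary is a union of horizontal and vertical line segments, each of which is $\nu$-null thanks to the absolute continuity of the marginals of $\nu$. Hence $\h_{v_{n_k}}(x,y)\to\h_\nu(x,y)$ at every $(x,y)\in[0,1]^2$. Combining this with the assumed pointwise limit $\h_{v_n}\to\h$, we obtain $\h=\h_\nu$. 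In particular, $\h_\nu$ does not depend on the choice of subsequence, so the full sequence $\pi_{v_n}$ converges weakly to $\nu$, and $\h$ is the height function of the permuton $\nu$.

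The main point requiring care is the $o(1)$ error estimate in the first paragraph: one must verify that sorting uniform samples essentially replaces values by their normalized quantile ranks, so that the empirical rectangle count $\frac{1}{n}\#\{i:X_i>x,\,Y_i\leq y\}$ and the grid-point count $\h_{u_n}(x,y)$ agree in the limit. This is routine once one applies the one-dimensional Glivenko--Cantelli theorem to the marginals; if the estimate became troublesome one could instead give a deterministic construction by discretizing $\mu$ into an $n\times n$ doubly stochastic matrix and applying Birkhoff--von Neumann to extract a nearby permutation matrix.
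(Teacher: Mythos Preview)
The paper does not supply its own proof of this proposition; it is quoted without argument from \cite{HKMRS}. Your proposal is correct and is essentially the standard argument one finds in that reference: the existence direction via $\mu$-random sampling and the law of large numbers, and the limit identification via Prokhorov compactness together with the Portmanteau theorem applied to rectangles (which are continuity sets for any permuton). The only step you flag as needing care---that the empirical rectangle count $\tfrac{1}{n}\#\{i:X_i>x,\,Y_i\le y\}$ agrees with $\h_{u_n}(x,y)$ up to $o(1)$---is indeed handled by the one-dimensional Glivenko--Cantelli theorem applied to each marginal, exactly as you indicate; the alternative Birkhoff--von Neumann route you mention also works and appears elsewhere in the literature.
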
 

A permuton $\mu$ can be used to generate random permutations as follows. First, given a sequence $r_1,\ldots,r_n$ of $n$ distinct real numbers, there is a unique permutation $u\in\SSS_n$ such that for all $j,j'\in[n]$, we have $r_j<r_{j'}$ if and only if $u(j)<u(j')$; we say the sequence $r_1,\ldots,r_n$ has the \dfn{same relative order} as $u$. Let us choose $n$ points in $[0,1]^2$ independently at random according to $\mu$. With probability $1$, no two of these points will lie on the same vertical or horizontal line. Thus, we can order these points as $(x_1,y_1),\ldots,(x_n,y_n)$, where $x_1<\cdots<x_n$. The sequence $y_1,\ldots,y_n$ has the same relative order as a unique permutation $w$. We call $w$ a \dfn{$\mu$-random permutation} of size $n$. 

One of the original motivations for the introduction of permutons in \cite{HKMRS} came from the study of \emph{pattern densities}. Let $k\leq n$ be positive integers, and let $u\in\SSS_n$ and $v\in\SSS_k$. We say a tuple $(i_1,\ldots,i_k)$ of integers with $1\leq i_1<\cdots<i_k\leq n$ is a \dfn{pattern occurrence} of $v$ in $u$ if the sequence $u(i_1),\ldots,u(i_k)$ has the same relative order as $v$. Let $\mathcal{N}_v(u)$ be the number of pattern occurrences of $v$ in $u$, and let $\mathcal{D}_v(u)=\frac{1}{\binom{n}{k}}\mathcal{N}_v(u)$. For example, $\mathcal{N}_{21}(u)$ is equal to $\ell(u)$, the number of inversions of $u$. Let us also write $\mathcal{D}_v(\mu)$ for the probability that a $\mu$-random permutation of size $k$ equals~$v$. 

\begin{proposition}[\cite{HKMRS}]\label{prop:patterns} 
Let $(u_n)_{n\geq 1}$ be a sequence of permutations with $u_n\in\SSS_n$ for all $n$. Let $\mu$ be a permuton. Then $(\pi_{u_n})_{n\geq 1}$ converges weakly to $\mu$ if and only if 
\[\lim_{n\to\infty}\mathcal{D}_v(u_n)=\mathcal D_v(\mu)\] for every permutation $v$. 
\end{proposition}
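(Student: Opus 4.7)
The plan is to prove both implications separately. The forward direction is a direct application of the continuous mapping theorem, while the backward direction reduces, via a compactness argument, to a uniqueness statement that is the main technical content. For the forward direction, I first bridge the combinatorial quantity $\mathcal{D}_v(u_n)$ and the probabilistic quantity $\mathcal{D}_v(\pi_{u_n})$: when $k$ i.i.d.\ points are drawn from $\pi_{u_n}$, the probability that they all fall in distinct unit squares is at least $1-\binom{k}{2}/n$, and conditional on this event their induced pattern is distributed exactly as the pattern of a uniform random size-$k$ subset of positions in $u_n$. Hence $|\mathcal{D}_v(u_n) - \mathcal{D}_v(\pi_{u_n})| = O(k^2/n)$. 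On the other hand, weak convergence $\pi_{u_n} \to \mu$ implies weak convergence of product measures $\pi_{u_n}^{\otimes k} \to \mu^{\otimes k}$. The relative-order map on $([0,1]^2)^k$ is continuous off the coordinate diagonals, which form a $\mu^{\otimes k}$-null set because $\mu$ has uniform marginals. The portmanteau theorem then gives $\mathcal{D}_v(\pi_{u_n}) \to \mathcal{D}_v(\mu)$, and combining yields $\mathcal{D}_v(u_n) \to \mathcal{D}_v(\mu)$.

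For the backward direction, I would invoke the weak compactness of the space of permutons, which follows from the fact that permutons are a weakly closed subset of the Borel probability measures on the compact space $[0,1]^2$. Hence every subsequence of $(\pi_{u_n})_{n \geq 1}$ has a further subsequence converging weakly to some permuton $\mu'$. Applying the forward direction along such a subsequence gives $\mathcal{D}_v(\mu') = \lim_n \mathcal{D}_v(u_n) = \mathcal{D}_v(\mu)$ for every permutation $v$. The task then reduces to the uniqueness statement: a permuton is uniquely determined by its pattern densities.

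This uniqueness is the main obstacle. The approach is to show that knowing all pattern densities $\mathcal{D}_v(\mu)$ determines the $\mu^{\otimes k}$-integral of the indicator of every order-cell $E_v \subseteq ([0,1]^2)^k$ (the set of $k$-tuples whose induced pattern is $v$), for every $k$. By combining these indicators through linear operations while refining the sample size, I would argue that one can recover the $\mu^{\otimes k}$-measure of every box of the form $\prod_{i=1}^{k} R_i$ with each $R_i$ a dyadic sub-rectangle of $[0,1]^2$; the key trick is an inclusion-exclusion on patterns that effectively ``localizes'' sample points into prescribed rectangles by conditioning on the relative order with a fixed reference grid of auxiliary sample points. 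Taking $k=1$ then recovers $\mu(R)$ for all dyadic rectangles $R \subseteq [0,1]^2$, and outer regularity determines $\mu$ as a Borel measure. With this uniqueness in hand, the subsequential compactness argument upgrades to the desired full weak convergence $\pi_{u_n} \to \mu$, completing the proof.
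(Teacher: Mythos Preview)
The paper does not give its own proof of this proposition; it is quoted from \cite{HKMRS} and used as a black box. So there is no in-paper argument to compare against, only the original one in that reference.

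Your strategy is essentially the one in \cite{HKMRS}. The forward direction is correct as you state it: the $O(k^2/n)$ coupling between $\mathcal D_v(u_n)$ and $\mathcal D_v(\pi_{u_n})$, weak convergence of product measures, and the portmanteau theorem applied to the relative-order map (continuous off a $\mu^{\otimes k}$-null set thanks to the uniform marginals) together give $\mathcal D_v(u_n)\to\mathcal D_v(\mu)$. The backward direction is also correctly reduced, via sequential compactness of permutons, to the uniqueness statement that a permuton is determined by its pattern densities.

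Your uniqueness sketch works in spirit but is phrased imprecisely: the ``fixed reference grid of auxiliary sample points'' is not fixed---those points are themselves i.i.d.\ from $\mu$. What actually makes the argument go through is that their $x$- and $y$-coordinates are each i.i.d.\ Uniform$[0,1]$ (by the marginal condition), so by Glivenko--Cantelli their order statistics concentrate near the deterministic grid $i/m$; only then does the rank of a distinguished sample point relative to the others pin down its location. Packaged this way, the argument is exactly the \emph{sampling lemma} of \cite{HKMRS}: if $\sigma_n$ is a $\mu$-random permutation of size $n$, then $\pi_{\sigma_n}\to\mu$ weakly in probability. Since the law of $\sigma_n$ depends only on $(\mathcal D_v(\mu))_v$, two permutons with the same pattern densities generate identically distributed samples and hence coincide as weak limits. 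This formulation is cleaner than the inclusion--exclusion you outline and avoids having to recover dyadic-box measures explicitly, but the underlying content is the same.
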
 

Given a statement $\mathrm{P}$, we let 
\[\boldsymbol{1}_{\mathrm{P}}=\begin{cases}
    1 & \text{if P is true}; \\
    0 & \text{if P is false}. 
\end{cases}\]

\section{Demazure Products of Permutons}\label{sec:Demazure_permutons} 

A sketch of a proof of the following result was given by Chan and Pflueger in \cite{CP}; a full proof was later provided by Pflueger in \cite{Pflueger}. The result was phrased in the language of permutations rather than permutons, but the translation from one to the other is straightforward. 

\begin{theorem}[\cite{CP,Pflueger}]\label{thm:CP} 
Let $u,v\in\SSS_n$. For all $x,y\in[0,1]$, we have 
\[\h_{u\star v}(x,y)=\min_{0\leq \gamma\leq 1}(\h_u(\gamma,y)+\h_v(x,\gamma)).\]
\end{theorem}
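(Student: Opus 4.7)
My plan is to reduce the identity to an integer-valued statement at lattice points, prove that version via tropical matrix multiplication in the $(\min, +)$-semiring (this is the Chan--Pflueger viewpoint), and finally extend to all of $[0,1]^2$.

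At a lattice point $(x,y) = (i/n, j/n)$, the rescaled height $n \cdot \h_w(i/n, j/n)$ equals the rank $r_w(i,j) := \#\{k \in [n] : k > i,\ w(k) \leq j\}$. After multiplying by $n$, the theorem at lattice points becomes
\[
r_{u \star v}(i,j) \;=\; \min_{0 \leq k \leq n}\bigl(r_u(k,j) + r_v(i,k)\bigr),
\]
which is a $(\min, +)$-matrix product identity. I would prove this by induction on $\ell(v)$. For the base case $v = e$, one has $r_e(i,k) = \max(0, k - i)$, and the $1$-Lipschitz property of $r_u(\cdot, j)$ --- equivalent to the uniform marginal of $\pi_u$ --- forces the minimum over $k$ to be attained at $k = i$, returning $r_u(i,j)$.

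For the inductive step, I would write $v = v' s_l$ with $\ell(v') = \ell(v) - 1$, so that $u \star v = \tau_l(u \star v')$. The key structural fact is that $\tau_l$ modifies rank matrices in a sparse, explicit way: setting $w = u \star v'$, if $w(l) < w(l+1)$ then $r_{\tau_l(w)}$ differs from $r_w$ by the single-cell indicator $\boldsymbol{1}\{i = l,\ w(l) \leq j < w(l+1)\}$, and otherwise $r_{\tau_l(w)} = r_w$. A parallel formula relates $r_{v' s_l}$ to $r_{v'}$. Combining these perturbation rules with the inductive identity for $v'$ and tracking how the optimal index $k$ in the min shifts produces the identity for $v$; equivalently, this verifies that $w \mapsto r_w$ intertwines the Demazure product with tropical matrix multiplication of rank matrices.

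To extend from lattice points to all $(x, y) \in [0,1]^2$, I would observe that for fixed $(x, y)$, the function $\gamma \mapsto \h_u(\gamma, y) + \h_v(x, \gamma)$ is continuous and piecewise linear with breakpoints in $\{0, 1/n, \ldots, 1\}$, so its minimum over $[0, 1]$ is already attained at a lattice value of $\gamma$. Both sides of the identity are then piecewise-polynomial functions on the $1/n$-grid, and agreement at the lattice corners combined with the $\sqrt{2}$-Lipschitz and uniform marginal structure of $\h$ propagates to equality on each sub-square. The main obstacle will be the inductive step: the interaction between the single-cell perturbation of $r_w$ under $\tau_l$ and the optimum in the tropical product requires a careful case analysis depending on whether the optimizing $k$ lies above, within, or below the perturbation strip $\{j : w(l) \leq j < w(l+1)\}$.
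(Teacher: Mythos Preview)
The paper does not give its own proof of this statement; it is quoted from \cite{CP,Pflueger}, which work in the language of permutations (integer rank functions), and the paper simply remarks that the translation to permuton height functions is straightforward. Your strategy for the integer identity $r_{u\star v}(i,j)=\min_{0\le k\le n}\bigl(r_u(k,j)+r_v(i,k)\bigr)$ by induction on $\ell(v)$ is exactly the Chan--Pflueger argument, and both your base case and your single-entry perturbation formula for $r_{\tau_l(w)}$ are correct.

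The genuine gap is in your extension from lattice points to all of $[0,1]^2$, and it cannot be repaired: the identity is \emph{only} true for $x,y\in\{0,1/n,\dots,1\}$ and fails at generic points. Take $n=2$ and $u=v=s_1$, so that $u\star v=s_1$, and evaluate at $(x,y)=(1/4,3/4)$. A direct computation gives $\h_{s_1}(1/4,3/4)=5/8$, whereas $\min_{\gamma}\bigl(\h_{s_1}(\gamma,3/4)+\h_{s_1}(1/4,\gamma)\bigr)=3/4$. Structurally, your propagation argument is blocked because on a grid cell containing a box of $\pi_{u\star v}$ the left side is genuinely bilinear, while the right side is a minimum of functions that are affine on that cell and hence concave there; these cannot agree on the interior while matching at the four corners. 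Even the base case $v=e$ fails off the grid, since $\pi_e$ is not the diagonal permuton and $\h_e(x,x)>0$ for non-lattice $x$. None of this harms the paper's applications---in the proof of \cref{prop:Demazure_permuton} one sends $n\to\infty$, and the lattice identity together with the $\sqrt{2}$-Lipschitz bound (an $O(1/n)$ correction at the nearest grid point) is all that is required---but the correct target for your proof is the rank identity at grid points, and the phrase ``for all $x,y\in[0,1]$'' in the statement should be read accordingly.
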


The preceding theorem naturally leads us to the notion of a Demazure product for permutons. 

\begin{proposition}\label{prop:Demazure_permuton}
Let $\mu,\nu$ be permutons. There is a unique permuton $\mu\star\nu$ such that 
\[\h_{\mu\star\nu}(x,y)=\min_{0\leq\gamma\leq 1}(\h_\mu(\gamma,y)+\h_\nu(x,\gamma))\] for all $x,y\in[0,1]$. Moreover, if $(u_n)_{n\geq 1}$ and $(v_n)_{n\geq 1}$ are sequences with $u_n,v_n\in\SSS_n$ such that $(\pi_{u_n})_{n\geq 1}$ converges weakly to $\mu$ and $(\pi_{v_n})_{n\geq 1}$ converges weakly to $\nu$, then the sequence $(\pi_{u_n\star v_n})_{n\geq 1}$ converges weakly to $\mu\star\nu$. 
\end{proposition}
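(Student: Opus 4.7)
The plan is to deduce the proposition from \cref{thm:CP} (the Chan--Pflueger identity) together with the observation that pointwise convergence of permuton height functions is automatically uniform; this follows from the $\sqrt{2}$-Lipschitz bound on every height function, which makes the family equicontinuous on the compact square $[0,1]^2$. The proof then proceeds by first establishing the convergence assertion (the ``moreover'' part) and then extracting existence from it.

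For the convergence statement, I would fix sequences $(u_n)_{n \geq 1}$ and $(v_n)_{n \geq 1}$ with $u_n, v_n \in \SSS_n$ such that $\pi_{u_n} \to \mu$ and $\pi_{v_n} \to \nu$ weakly. By the discussion in \cref{sec:preliminaries}, this is equivalent to $\h_{u_n} \to \h_\mu$ and $\h_{v_n} \to \h_\nu$ uniformly on $[0,1]^2$. Setting $g_n(x,y,\gamma) = \h_{u_n}(\gamma,y) + \h_{v_n}(x,\gamma)$ and $g(x,y,\gamma) = \h_\mu(\gamma,y) + \h_\nu(x,\gamma)$, uniform convergence of the summands yields $g_n \to g$ uniformly on $[0,1]^3$, and hence $|\min_\gamma g_n(x,y,\gamma) - \min_\gamma g(x,y,\gamma)| \leq \|g_n - g\|_\infty \to 0$ for every $(x,y) \in [0,1]^2$ (the minima over $\gamma$ are attained because the functions are continuous on the compact interval $[0,1]$). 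By \cref{thm:CP}, $\min_\gamma g_n(x,y,\gamma) = \h_{u_n \star v_n}(x,y)$, so $\h_{u_n \star v_n}$ converges pointwise to the proposed min-formula.

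For existence, I would apply the first half of \cref{lem:permuton_convergence} to produce \emph{some} approximating sequences for $\mu$ and $\nu$; the argument above shows that $\h_{u_n \star v_n}$ converges pointwise to $\min_{0 \leq \gamma \leq 1}(\h_\mu(\gamma,y) + \h_\nu(x,\gamma))$, and the second half of \cref{lem:permuton_convergence} ensures that this pointwise limit is the height function of a permuton, which I would name $\mu \star \nu$. Uniqueness is immediate since a permuton is determined by its height function: the identity $\mu([x,1]\times[0,y]) = \h_\mu(x,y)$ specifies $\mu$ on a $\pi$-system generating the Borel $\sigma$-algebra on $[0,1]^2$. The weak convergence claim for arbitrary approximating sequences then follows at once from the previous paragraph.

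The proof is mostly bookkeeping; the one genuine analytic input is the upgrade of pointwise to uniform convergence for height functions, which is what allows the limit to be commuted with the infimum over $\gamma$. I do not foresee any substantive obstacle.
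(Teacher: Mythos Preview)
Your proposal is correct and follows essentially the same route as the paper's proof: both arguments use \cref{lem:permuton_convergence} to produce approximating sequences, upgrade pointwise to uniform convergence of height functions via the $\sqrt{2}$-Lipschitz bound, and then pass the limit through the minimum over $\gamma$ using \cref{thm:CP}. Your packaging of the last step as $|\min_\gamma g_n-\min_\gamma g|\le\|g_n-g\|_\infty$ is just a cleaner way of writing the paper's explicit $\pm 2\epsilon$ sandwich, and the slight reordering (proving convergence first, then existence) makes no substantive difference.
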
 

\begin{proof}
For $x,y\in[0,1]$, let \[\h(x,y)=\min_{0\leq\gamma\leq 1}(\h_\mu(\gamma,y)+\h_\nu(x,\gamma)).\] By \cref{lem:permuton_convergence}, there exist sequences $(u_n)_{n\geq 1}$ and $(v_n)_{n\geq 1}$ such that $(\h_{u_n})_{n\geq 1}$ converges pointwise to $\h_\mu$ and $(\h_{v_n})_{n\geq 1}$ converges pointwise to $\h_\nu$. We will prove that $(\h_{u_n\star v_n})_{n\geq 1}$ converges pointwise to $\h$. Since weak convergence of a sequence of permutons is equivalent to pointwise convergence of the corresponding sequence of height functions, this will complete the proof. 

A sequence of height functions converges pointwise if and only if it converges uniformly (because height functions are $\sqrt{2}$-Lipschitz). Thus, $(\h_{u_n})_{n\geq 1}$ and $(\h_{v_n})_{n\geq 1}$ converge uniformly to $\h_\mu$ and $\h_\nu$, respectively. Choose $\epsilon>0$. There is a positive integer $N$ such that \[|\h_{u_n}(x,y)-\h_\mu(x,y)|<\epsilon\quad\text{and}\quad|\h_{v_n}(x,y)-\h_\nu(x,y)|<\epsilon\] for all $(x,y)\in[0,1]^2$ and all integers $n\geq N$. For $(x,y)\in[0,1]^2$ and $n\geq N$, we can use \cref{thm:CP} to find that 
\begin{align*}
\h_{u_n\star v_n}(x,y)&=\min_{0\leq\gamma\leq 1}\left(\h_{u_n}(\gamma,y)+\h_{v_n}(x,\gamma)\right) \\ 
&\geq\min_{0\leq\gamma\leq 1}\left(\h_{\mu}(\gamma,y)-\epsilon+\h_{\nu}(x,\gamma)-\epsilon\right) \\ 
&=\h(x,y)-2\epsilon. 
\end{align*}
A similar computation shows that $\h_{u_n\star v_n}(x,y)\leq\h(x,y)+2\epsilon$. As $\epsilon$ was arbitrary, this proves the desired convergence.  
\end{proof}

Given permutons $\mu$ and $\nu$, we of course call $\mu\star\nu$ the \dfn{Demazure product} of $\mu$ and $\nu$. Note that the Demazure product is an associative binary operation on the set of permutons. Indeed, this follows from \cref{prop:Demazure_permuton} and the fact that the usual Demazure product on $\SSS_n$ is associative. The \dfn{identity permuton} is the permuton supported on the line segment $\{(x,x):x\in[0,1]\}$; one can think of it as the scaling limit of identity permutations. The set of permutons forms a monoid under the Demazure product. 

Let us write $\delta_k$ for the decreasing permutation in $\SSS_k$. We now have the tools to effortlessly prove the following theorem, which contains \cref{thm:random_star_random} as a special case since the number of inversions of a permutation $u$ is the same as $\mathcal{N}_{\delta_2}(u)$. 

\begin{theorem}\label{thm:random_star_random_patterns} 
Fix a permutation $w$ of size $k$. Let $(u_n)_{n\geq 1}$ and $(v_n)_{n\geq 1}$ be independent sequences of independent permutations, where $u_n$ and $v_n$ are chosen uniformly at random from $\SSS_n$. We have 
\[\lim_{n\to\infty}\mathcal{D}_w(u_n\star v_n)=\boldsymbol{1}_{w=\delta_k}.\] 
\end{theorem}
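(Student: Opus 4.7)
The plan is to leverage \cref{prop:Demazure_permuton} together with \cref{prop:patterns} to reduce the problem to a deterministic computation of the Demazure product of two copies of the uniform permuton on $[0,1]^2$.

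First, I would identify the limiting permuton of $(u_n)_{n \geq 1}$. Let $\mu_\square$ denote the uniform permuton, i.e., the Lebesgue measure on $[0,1]^2$; its height function is $\h_{\mu_\square}(x,y) = (1-x)y$. It is a standard fact (which follows immediately from \cref{prop:patterns} together with the law of large numbers applied to pattern densities of a uniformly random permutation) that $(\pi_{u_n})_{n \geq 1}$ converges weakly to $\mu_\square$ almost surely, and similarly for $(\pi_{v_n})_{n \geq 1}$. Since the two sequences are independent of each other, on an event of probability $1$ both convergences hold simultaneously, so \cref{prop:Demazure_permuton} yields that $(\pi_{u_n \star v_n})_{n \geq 1}$ converges weakly to $\mu_\square \star \mu_\square$ almost surely.

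Next, I would compute $\mu_\square \star \mu_\square$ explicitly. Using the defining formula,
\begin{equation*}
\h_{\mu_\square \star \mu_\square}(x,y) = \min_{0 \leq \gamma \leq 1}\bigl((1-\gamma)y + (1-x)\gamma\bigr) = \min_{0 \leq \gamma \leq 1}\bigl(y + \gamma(1-x-y)\bigr) = \min\{y, 1-x\}.
\end{equation*}
The right-hand side is precisely the height function of the \emph{reverse permuton} supported on the antidiagonal $\{(x, 1-x) : x \in [0,1]\}$; one can verify this directly or note that it arises as the pointwise limit of $\h_{\delta_n}$, so by \cref{lem:permuton_convergence} it is indeed a height function of a permuton, and the reverse permuton is characterized by having all its mass on this antidiagonal.

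Finally, I would apply \cref{prop:patterns} to convert permuton convergence into pattern-density convergence: almost surely, $\mathcal{D}_w(u_n \star v_n) \to \mathcal{D}_w(\mu_\square \star \mu_\square)$. To finish, I observe that a $\mu_\square \star \mu_\square$-random permutation of size $k$ is constructed by sampling $k$ points on the antidiagonal with uniform and distinct $x$-coordinates; reading off the relative order of the $y$-coordinates yields the decreasing permutation $\delta_k$ with probability $1$. Therefore $\mathcal{D}_w(\mu_\square \star \mu_\square) = \boldsymbol{1}_{w = \delta_k}$, which gives the claim (in the almost-sure sense, and hence in any weaker mode such as in probability or in expectation).

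The only nonroutine step is the minimization giving $\h_{\mu_\square \star \mu_\square} = \min\{y, 1-x\}$, but since the objective is linear in $\gamma$ the optimum is attained at an endpoint and there is no real obstacle; the rest is bookkeeping. The cleanest aspect of this approach is that the associativity and continuity machinery of \cref{prop:Demazure_permuton} completely bypass any direct combinatorial analysis of reduced words for $u_n \star v_n$.
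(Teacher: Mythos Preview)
Your proposal is correct and follows essentially the same approach as the paper: identify the uniform permuton as the almost-sure limit of each sequence, apply \cref{prop:Demazure_permuton} to pass to the Demazure product, compute $\h_{\mu_\square\star\mu_\square}(x,y)=\min\{y,1-x\}$ to recognize the antidiagonal permuton, and conclude via \cref{prop:patterns}. The paper's proof is the same argument with the notation $\upsilon$ in place of your $\mu_\square$.
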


\begin{proof}
Choose $u_n,v_n\in\SSS_n$ independently and uniformly at random. Each of the sequences $(\pi_{u_n})_{n\geq 1}$ and $(\pi_{v_n})_{n\geq 1}$ converges weakly to the uniform probability measure $\upsilon$ on $[0,1]^2$. By \cref{prop:Demazure_permuton}, the sequence $(\h_{u_n\star v_n})_{n\geq 1}$ converges pointwise to $\h_{\upsilon\star\upsilon}$. Note that $\h_{\upsilon}(x,y)=(1-x)y$ for all $x,y\in[0,1]$. By \cref{prop:Demazure_permuton}, we have 
\begin{align*}
\h_{\upsilon\star\upsilon}(x,y)&=\min_{0\leq \gamma\leq 1}((1-\gamma)y+(1-x)\gamma) \\ 
&=\begin{cases}
    y & \text{if } x+y\leq 1; \\
    1-x & \text{if } x+y>1. 
\end{cases}
\end{align*} 
It follows that $\upsilon\star\upsilon$ is the permuton whose support is the line segment $\{(x,1-x):x\in[0,1]\}$.  Consequently, an $(\upsilon\star\upsilon)$-random permutation of size $k$ is $\delta_k$ with probability $1$. In other words, $\mathcal{D}_w(\upsilon\star\upsilon)=\boldsymbol{1}_{w=\delta_k}$. The desired result now follows from \cref{prop:patterns}.    
\end{proof}

\section{Random Pipe Dreams and the TASEP}\label{sec:TASEP}  

Our first goal in this section is to prove \cref{thm:NewMPPY,thm:fluctuations}. To do so, we follow the approach of \cite{MPPY} by relating random pipe dreams to the totally asymmetric simple exclusion process (TASEP) with geometric jumps and parallel updates, which dates back to the work of Vershik and Kerov \cite{VK} and appears in other articles such as \cite{BF,DMO,DW,KPS}. The authors of \cite{MPPY} first related random pipe dreams to the stochastic $6$-vertex model and then appealed to the known connections between the stochastic $6$-vertex model and the TASEP. We will instead pass directly from random pipe dreams to the TASEP.  

When proving \cref{thm:NewMPPY}, we will make a simplifying assumption that $p\neq 1$. Essentially the same argument handles the case in which $p=1$, but including this case makes the notation a bit more cumbersome. In fact, the argument is much simpler when $p=1$ because there is no randomness involved. (One can think of the TASEP with geometric jumps with parameter $p=1$ as a deterministic process that moves one particle off to infinity at each time step until there are no particles left.) 

\subsection{The TASEP with Geometric Jumps} 
Fix $p\in(0,1)$. Consider the following discrete-time interacting particle system with $k$ particles occupying distinct positions on the positive integer lattice $\ZZ_{\geq 1}$. For $i\in[k]$ and $t\in\ZZ_{\geq 0}$, let $\xi_i(t)\in\ZZ_{\geq 1}$ denote the position of particle $i$ at time $t$. We use \dfn{step initial conditions}, meaning $\xi_i(0)=k+1-i$ for all $i\in[k]$ (so particle $1$ is farthest to the right). At each time step, the particles $k,k-1,\ldots,1$ jump in this order to the right by distances determined by independent geometric random variables with parameter $p$, subject to the conditions that no two particles can occupy the same site and that no particle can jump over another particle. To be more precise, let $(G_i(t))_{(i,t)\in[k]\times\ZZ_{\geq 1}}$ be a collection of independent geometric random variables, each with parameter $p$. Thus, \[\mathbb P(G_i(t)=r)=(1-p)p^r\] for all nonnegative integers $r\geq 0$. Let $\xi_0(t)=\infty$ for all $t\geq 0$. Then for all $t\geq 1$, we have
\[\xi_i(t)=\min(\xi_i(t-1)+G_i(t),\,\xi_{i-1}(t-1)-1).\] See \cref{fig:TASEP_jumps}. Let $\overline\xi_i(t)=\xi_i(t)-\xi_i(0)$.

\begin{figure}[]
  \begin{center}
  \includegraphics[height=3.35cm]{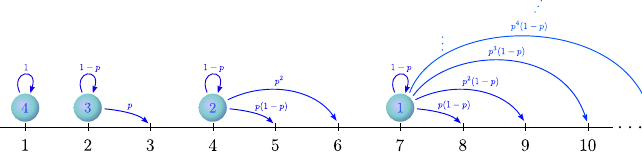}
  \end{center}
\caption{A state of the TASEP with geometric jumps and $4$ particles. In the next transition, each particle jumps to a position indicated by an arrow with the probability labeling the arrow. }\label{fig:TASEP_jumps}  
\end{figure}

The interacting particle system described above is the \dfn{TASEP with geometric jumps}. It is an integrable system that belongs to the KPZ universality class. To be more precise, we work in the hydrodynamic scale by fixing constants ${\sf m},{\sf t}\geq 0$ and letting 
\[
m=\left\lfloor L{\sf m}\right\rfloor\quad\text{and}\quad t=\left\lfloor L{\sf t}\right\rfloor, 
\]
where $L>0$ is a real parameter. The next result, which was proven in \cite{MPPY}, describes the asymptotics of quantities of the form $\mathbb P(\overline \xi_m(t)\geq L{\sf g})$ as $L\to\infty$. To state it, let us define 
\begin{equation}\label{eq:qr}
{\sf q}_p({\sf m},{\sf t})=\frac{\sqrt{p}{\sf m}^{1/3}}{{\sf t}^{1/6}}\frac{(\sqrt{{\sf t}/p}-\sqrt{\sf m})^{2/3}}{(\sqrt{p{\sf t}}-\sqrt{\sf m})^{1/3}}\quad\text{and}\quad{\sf r}_p({\sf m},{\sf t})=\frac{\sqrt{p}(\sqrt{p{\sf t}}-\sqrt{\sf m})^{2/3}(\sqrt{{\sf t}/p}-\sqrt{\sf m})^{2/3}}{({\sf mt})^{1/6}(1-p)}.
\end{equation} 
Let $F_2$ denote the cumulative distribution function of the Tracy--Widom GUE distribution. 
For $a,b\geq 0$, let 
\[
\cc_p(a,b)=\frac{(\sqrt{pb}-\sqrt{a})^2}{1-p}\boldsymbol{1}_{pb\geq a}. 
\] 

\begin{theorem}[{\cite[Theorem~3.7]{MPPY}}]\label{thm:TASEP}
We have \[\lim_{L\to\infty}L^{-1}\overline\xi_{\left\lfloor L{\sf m}\right\rfloor}(\left\lfloor L{\sf t}\right\rfloor)=\cc_p({\sf m},{\sf t}),\] where the convergence is in probability. 
Moreover, if $0<{\sf m}/p<{\sf t}$, then for all $\alpha,\beta\in\mathbb R$, we have 
\[\lim_{L\to\infty}\mathbb{P}\left(\overline{\xi}_{\left\lfloor L{\sf m}-L^{1/3}\alpha{\sf q}_p({\sf m},{\sf t})\right\rfloor}(\left\lfloor L{\sf t}\right \rfloor)\geq L\cc_p({\sf m},{\sf t})-L^{1/3}\beta{\sf r}_p({\sf m},{\sf t})\right)=F_2(\alpha+\beta).\]  
\end{theorem}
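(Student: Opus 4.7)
The plan is to reduce the statement to Johansson's shape and fluctuation theorems for last passage percolation (LPP) with geometric weights. The first step is to identify the random variable $\overline\xi_i(t)$ with a last passage time in an i.i.d.\ geometric environment. Concretely, I would show by induction on $t$ that, under the step initial condition,
\[
\overline\xi_i(t) = \max_{\pi} \sum_{(a,b)\in\pi} G_a(b),
\]
where the max runs over up-right lattice paths from $(1,1)$ to $(i,t)$, and the $G_a(b)$ are the i.i.d.\ $\mathrm{Geom}(p)$ increments driving the process. The inductive step uses the defining recursion $\xi_i(t)=\min(\xi_i(t-1)+G_i(t),\,\xi_{i-1}(t-1)-1)$: unwinding the minimum corresponds to taking either a vertical or a horizontal step in the path, which is exactly the Bellman recursion for LPP.

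Once this translation is in place, the law-of-large-numbers assertion follows from Johansson's shape theorem for geometric LPP, which gives an explicit formula for $L^{-1}G(\lfloor L{\sf m}\rfloor,\lfloor L{\sf t}\rfloor)$ in the ``fluid'' regime. Specializing that formula and normalizing produces exactly ${\sf c}_p({\sf m},{\sf t})=\frac{(\sqrt{p{\sf t}}-\sqrt{\sf m})^2}{1-p}$, with the indicator $\boldsymbol{1}_{p{\sf t}\geq {\sf m}}$ recording the frozen regime $p{\sf t}<{\sf m}$ where the rightmost particles have not had time to move.

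For the fluctuation part, I would invoke Johansson's theorem on Tracy--Widom GUE fluctuations for geometric LPP, which provides convergence of the one-parameter expression $(G(\lfloor L{\sf m}\rfloor,\lfloor L{\sf t}\rfloor)-L{\sf c}_p)/(\sigma_p L^{1/3})$ to $F_2$, for an explicit $\sigma_p$. The two-parameter statement involving $(\alpha,\beta)$ requires an additional ``slope-linearization'' step: on the $L^{1/3}$ transversal scale, the last-passage time is asymptotically linear as a function of the starting index $m$, with effective slope governed by $\partial_{\sf m}{\sf c}_p({\sf m},{\sf t})$. Thus shifting $m \mapsto m-L^{1/3}\alpha{\sf q}_p$ produces an additive correction of $\alpha{\sf r}_p L^{1/3}$ to the threshold, which combines with $\beta{\sf r}_p L^{1/3}$ to give $F_2(\alpha+\beta)$. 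The explicit expressions for ${\sf q}_p$ and ${\sf r}_p$ from \eqref{eq:qr} then emerge from computing $\partial_{\sf m}{\sf c}_p$ and the curvature scaling at $({\sf m},{\sf t})$; the constraint $0<{\sf m}/p<{\sf t}$ ensures we are strictly inside the fluid region, away from the edges where Johansson's asymptotics degenerate.

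The main obstacle is the slope-linearization: justifying that the dependence of the passage time on the endpoint can be replaced by a linear function on the $L^{1/3}$ scale is heuristic and must be made rigorous. One route is to appeal to known transversal fluctuation bounds and moderate-deviation estimates for the geometric LPP process, which control the local landscape of $G$ around $(m,t)$ uniformly in $L$. A more direct alternative is to bypass linearization entirely and carry out a joint steepest-descent analysis of the Fredholm determinant formula for the two-parameter law, reading off both ${\sf q}_p$ and ${\sf r}_p$ simultaneously from the expansion around the double critical point.
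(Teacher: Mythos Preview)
The paper does not prove this statement: it is quoted from \cite[Theorem~3.7]{MPPY} and used as a black-box input to the proofs of \cref{thm:NewMPPY,thm:fluctuations}. There is therefore no argument in the paper to compare your proposal against.

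That said, your proposal contains a genuine error. The claimed identity $\overline\xi_i(t)=\max_\pi\sum_{(a,b)\in\pi}G_a(b)$ is false for this model. Subtracting the step initial data from the defining recursion gives $\overline\xi_i(t)=\min\bigl(\overline\xi_i(t-1)+G_i(t),\,\overline\xi_{i-1}(t-1)\bigr)$, a \emph{min}-plus recursion, and unfolding it does not produce a last passage time. Concretely, one checks $\overline\xi_2(1)=\min(G_2(1),0)=0$ and $\overline\xi_2(2)=\min(G_2(2),G_1(1))$, whereas your formula would give $G_1(1)+G_2(1)$ and $G_1(1)+G_2(2)+\max(G_1(2),G_2(1))$, respectively. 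The actual link to Johansson's theorems is not an equality of random variables but an equality of events: one shows that $\{\overline\xi_m(t)\geq j\}$ coincides with a last-passage event of the form $\{L(m,j)\leq t\}$ for an appropriate geometric LPP $L$ (alternatively, as in \cite{MPPY}, one works directly with exact Fredholm determinant formulas). Once that coupling is in place, the constants $\cc_p$, ${\sf q}_p$, ${\sf r}_p$ do fall out by inverting and Taylor-expanding, much as in your slope-linearization paragraph; the overall strategy is sound, but the coupling step must be redone.
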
 

\subsection{TASEP from Pipe Dreams} 
Let $\mathscr{S}=\mathscr{S}(\Lambda_{\swarrow},\Lambda_{\nearrow})$ be the order-convex shape bounded by two paths $\Lambda_{\swarrow}$ and $\Lambda_{\nearrow}$ in $\boldsymbol{\Lambda}_n$ and the lines $\mathfrak{L}_0$ and $\mathfrak{L}_n$. Let $r$ be the number of columns of $\sS$. Let $a_j$ and $b_j$ be the contents of the northmost and southmost boxes, respectively, in the $j$-th column of $\sS$. Let $\omega_j=(b_j,b_{j}-1,\ldots,a_j)$ be the word that lists the integers in the interval $[a_j,b_j]$ in decreasing order. A simple yet crucial observation is that 
\begin{equation}\label{eq:a_j_b_j}
a_{j}\leq\max\{a_{j+1}-1,1\}\quad\text{and}\quad b_{j+1}\geq \min\{b_j+1,n-1\}
\end{equation}
for all $1\leq j\leq r-1$. 

Let \[{u=\Delta_p\left(\sS\left(\Lambda_{\swarrow},\Lambda_{\nearrow}\right)\right)}.\] We compute $u$ by choosing a linear extension of $\sS$, writing down the associated word $\mathbf{w}(\sS)$, randomly deleting letters to form the subword $\sub_p(\mathbf{w}(\sS))$, and then taking the Demazure product. Let us choose the linear extension that traverses each column from south to north, visiting the columns one by one from west to east. Then $\mathbf{w}(\sS)=\omega_1\cdots\omega_r$. Let $v_j=\Delta_p(\omega_1\cdots\omega_r)$. 

We wish to understand $\h_u(x,y)=\h_{v_r}(x,y)$, where $y=(n-k)/n$ and $x=k'/n$ for some integers $k,k'\in[0,n]$. To this end, let $T$ be the largest integer such that  $b_T\leq n-k-1$, and let $T'$ be the largest integer such that $a_{T'}\leq k'$.  Given a permutation $w\in\SSS_n$, we obtain a state $\iota(w)$ of the $k$-particle TASEP in which the positions occupied by the particles are the elements of the set \[n+1-w^{-1}([n+1-k,n])=\{n+1-w^{-1}(i):n+1-k\leq i\leq n\}.\] A straightforward computation shows that $(x-y+\h_w(x,y))n$ is equal to the number of particles in $\iota(w)$ that occupy positions at or to the right of $n-k'+1$. 

The boxes in columns $1,\ldots, T$ have no effect on the particles in our TASEP. More precisely, since the contents of the boxes in these columns are all at most $n-k-1$ (by the definition of $T$), we have $v_{T}^{-1}([n+1-k,n])=[n+1-k,n]$, so the particles occupy their initial positions $1,\ldots,k$ in the state $\iota(v_{T})$. 

If $T+1\leq T'$, then to obtain the state $\iota(v_{T+1})$ from the state $\iota(v_{T})$, we move particle $1$ by some geometrically-distributed (with parameter $p$) amount subject to the condition that the particle cannot move past position $n+1-a_{T+1}$; all other particles stay still. Since $a_{T+1}\leq k'$ (by the definition of $T'$), it follows that the number of particles at or to the right of position $n+1-k'$ in $\iota(v_{T+1})$ has the same distribution as the maximum of $k'$ and the number of particles at or to the right of position $n+1-k'$ in the $k$-particle TASEP with geometric jumps after $1$ time step. More generally, if $T+1\leq j\leq T'$, then we obtain the state $\iota(v_{j})$ from the state $\iota(v_{j-1})$ by allowing the particles in positions at or to the left of position $n+1-a_j$ to jump by some geometrically-distributed (with parameter $p$) amounts subject to the conditions that particles cannot hop over each other and that particles cannot move past position $n+1-a_j$. Since $a_{j}\leq k'$ (by the definition of $T'$), it follows that the number of particles at or to the right of position $n+1-k'$ in $\iota(v_{j})$ has the same distribution as the maximum of $k'$ and the number of particles at or to the right of position $n+1-k'$ in the $k$-particle TASEP with geometric jumps after $j-T$ time steps. In this analysis, we are crucially using \eqref{eq:a_j_b_j} to see that the particles sitting in their initial positions that are capable of moving when we transition from $\iota(v_{j-1})$ to $\iota(v_j)$ are the same as the particles sitting in their initial positions that are capable of moving in the $(j-T)$-th step of the TASEP. When we transform $\iota(v_{T'})$ into $\iota(v_r)$ through a sequence of transitions, no particles can move strictly to the right of position $n-k'$ (by the definition of $T'$). 

The preceding paragraphs tell us that $(x-y+\h_u(x,y))n$, which is the number of particles in $\iota(u)=\iota(v_{r})$ occupying positions at or to the right of $n+1-k'$, has the same distribution as the maximum of $k'$ and the number of particles in the TASEP in these positions at time step $T'-T$. Note that $\frac{1}{n}T'\approx\theta_{\Lambda_{\nearrow}}(x)$ and $\frac{1}{n}T\approx\theta_{\Lambda_{\swarrow}}(y)$. 

\begin{figure}[]
  \begin{center}
\includegraphics[height=3.864cm]{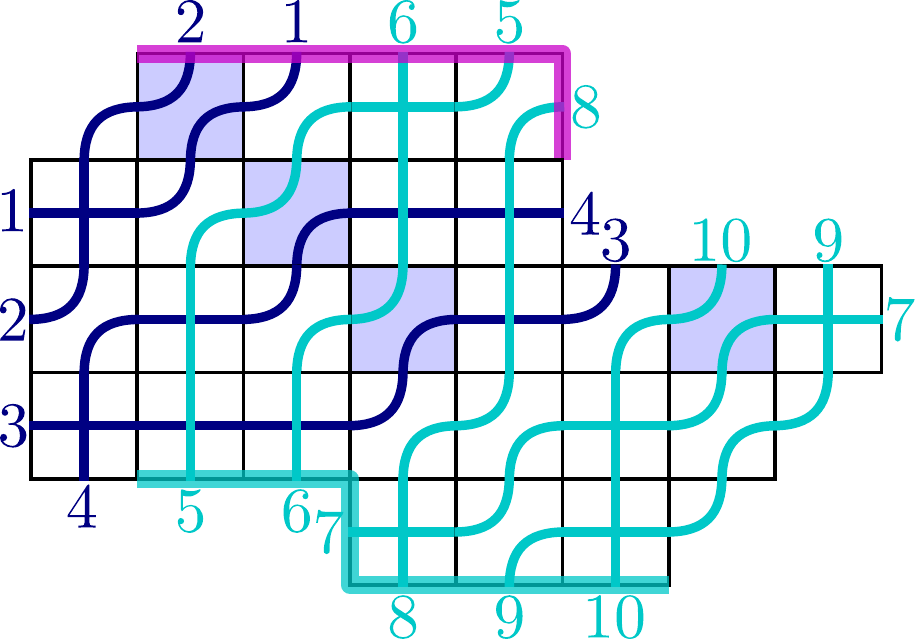}
  \end{center}
\caption{A shape filled with a random pipe dream. Pipes numbered $5,6,7,8,9,10$ are colored {\color{Teal}teal}. Shaded boxes are where crossings were resolved.  }\label{fig:TASEP_example_1}  
\end{figure} 

\begin{example}
Let $n=10$, and consider the shape shown in \cref{fig:TASEP_example_1}. We have filled this shape with a random pipe dream and resolved crossings in the shaded boxes to compute the Demazure product. We have \[(a_1,\ldots,a_8)=(1,1,2,3,4,7,8,9)\quad\text{and}\quad (b_1,\ldots,b_8)=(3,4,5,7,8,9,9,9).\]
Let $k=6$ and $k'=5$ so that $x=1/2$ and $y=2/5$. Then $T=1$ and $T'=5$. 

\cref{fig:TASEP_example_2} illustrates how to fill the shape one column at a time. At the beginning, all boxes are filled with bump tiles and colored {\color{Yellow}yellow}. At time $j$, we fill the boxes in column $j$ with the appropriate tiles from \cref{fig:TASEP_example_1} and remove the {\color{Yellow}yellow} shading from those boxes, resolving crosses and shading boxes when necessary in order to keep the pipe dream reduced. Thus, at time $j$, the columns to the right of column $j$ remain {\color{Yellow}yellow}. The pipes numbered with elements of $[n+1-k,n]=\{5,6,7,8,9,10\}$ are colored {\color{Teal}teal}. At time $j$, we draw the plot of the permutation $v_j$ with the points corresponding to the numbers in $\{5,6,7,8,9,10\}$ colored {\color{Teal}teal}; we also draw the TASEP state $\iota(v_j)$ just below the plot. As teal points move to the left in the plots, particles move to the right in the TASEP. In each TASEP state, the positions at or to the right of position $n+1-k'=6$ are shaded {\color{Magenta}magenta}. The particles occupying these positions correspond to the points in the plot lying in the {\color{Magenta}magenta} rectangular region. Note that the number of particles in these positions can only change at times $T+1=2,3,4,5=T'$. Moreover, the number of points in that rectangular region at time $j$ is $(x-y+\h_{v_j}(x,y))n$.  
\end{example} 

\begin{figure}[]
  \begin{center}
\includegraphics[width=\linewidth]{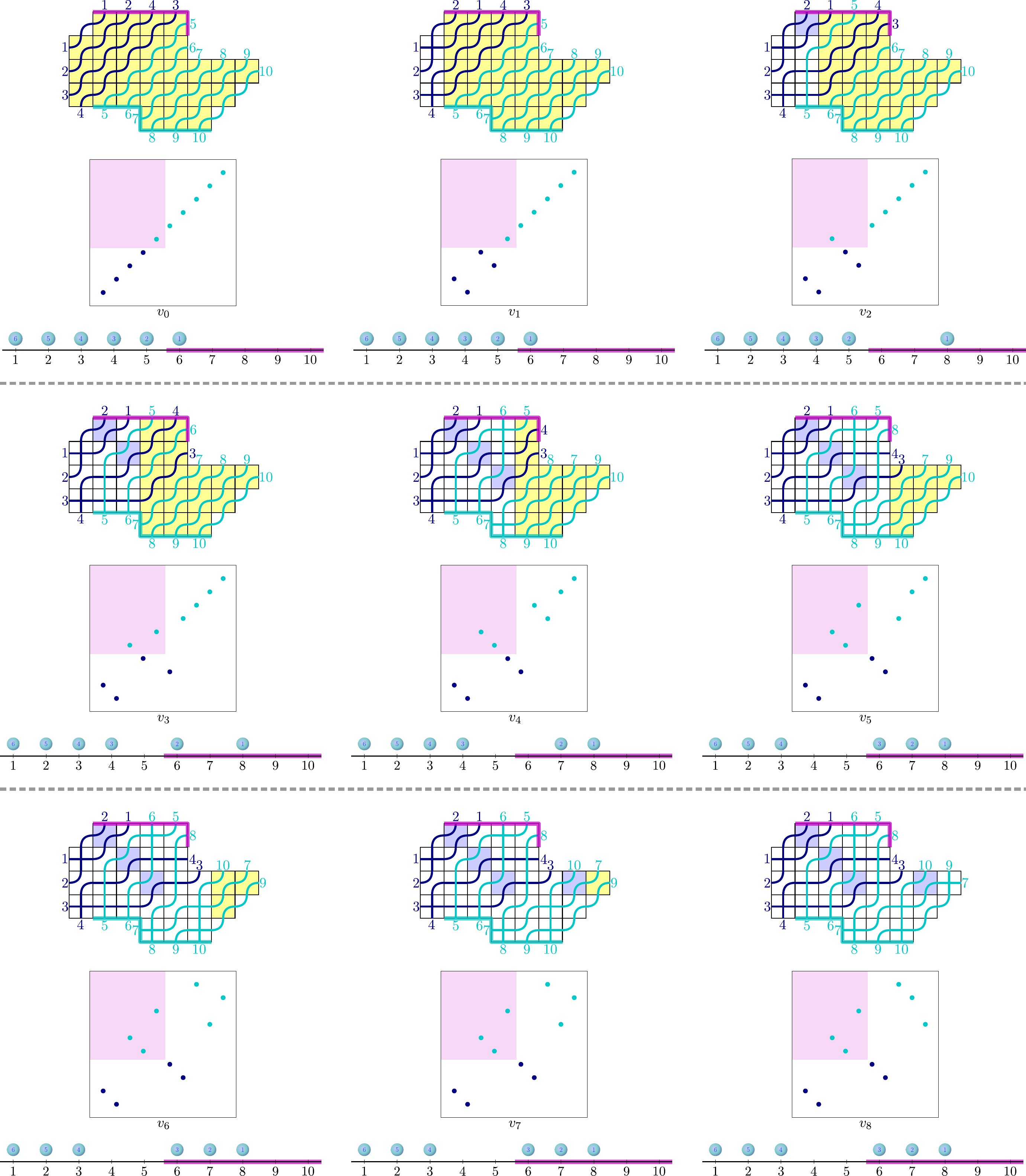}
  \end{center}
\caption{We compute the Demazure product from the shape in \cref{fig:TASEP_example_1} column by column. Under the plot of each permutation $v_j$ is the TASEP state $\iota(v_j)$. }\label{fig:TASEP_example_2}  
\end{figure}

Let us now consider the setup of \cref{thm:NewMPPY}. We have functions $\varphi,\psi\in\mathfrak B$ satisfying $\varphi(z)\leq\psi(z)$ for all $z\in[0,1]$. For simplicity, let us write $\hh_p=\hh_p^{\varphi,\psi}$. For each $n\geq 1$, we have lattice paths $\Lambda_{\swarrow}^{(n)}$ and $\Lambda_{\nearrow}^{(n)}$ in $\boldsymbol{\Lambda}_n$ such that the former lies weakly below the latter. We assume the functions  $\theta_{\Lambda_{\swarrow}^{(n)}}$ and $\theta_{\Lambda_{\nearrow}^{(n)}}$ converge pointwise to $\varphi$ and $\psi$, respectively. Let \[{u_n=\Delta_p\left(\sS\left(\Lambda_{\swarrow}^{(n)},\Lambda_{\nearrow}^{(n)}\right)\right)}.\]  

Let ${(\xx,\yy)\in[0,1]^2}$, and consider the TASEP with $\left\lfloor n(1-y)\right\rfloor$ particles and geometric jumps. For $h\in\mathbb R$, we are interested in the probability \[\mathbb P(\h_{u_n}(x,y)\geq h)=\mathbb P(x-y+\h_{u_n}(x,y)\geq x-y+h).\] This probability is $1$ if $h<0$, and it is $0$ if $h>\min\{1-x,y\}$. Now assume $h\in[0,\min\{1-x,y\}]$. Then the preceding analysis tells us that as $n\to\infty$, the desired probability is asymptotically equal to the probability that at least $(x-y+h)n$ particles have passed to the right of position $n(1-x)$ by time step $n(\psi(x)-\varphi(y))$. If $h<y-x$, this asymptotic probability is $1$; otherwise, we have  
\begin{align}
\nonumber \lim_{n\to\infty}\mathbb P(\h_{u_n}(x,y)\geq h)&=\lim_{n\to\infty}\mathbb{P}\left(\xi_{\left\lfloor n(x-y+h)\right\rfloor}(n(\psi(x)-\varphi(y)))\geq n(1-x)\right) \\ 
\nonumber &=\lim_{n\to\infty}\mathbb{P}\left(\overline \xi_{\left\lfloor n(x-y+h)\right\rfloor}(n(\psi(x)-\varphi(y)))\geq n(1-x)-\xi_{\left\lfloor n(x-y+h)\right\rfloor}(0)\right) \\ 
\label{eq:lim_x-y+h}&=\lim_{n\to\infty}\mathbb{P}\left(\overline \xi_{\left\lfloor n(x-y+h)\right\rfloor}(n(\psi(x)-\varphi(y)))\geq hn\right). 
\end{align} 
If $\cc_p(x-y+h,\psi(x)-\varphi(y))>h$, then \cref{thm:TASEP} implies that the limit in \eqref{eq:lim_x-y+h} is $1$. On the other hand, if $\cc_p(x-y+h,\psi(x)-\varphi(y))<h$, then the limit in \eqref{eq:lim_x-y+h} is $0$. If we fix $x$ and $y$ and increase $h$, then $\cc_p(x-y+h,\psi(x)-\varphi(y))$ decreases while $h$ increases. This implies that 
\[
\lim_{n\to\infty}\h_{u_n}(\xx,\yy)=\sup\{h\in[\max\{0,y-x\},\min\{1-x,y\}]:\cc_p(x-y+h,\psi(x)-\varphi(y))>h\}
\] with probability $1$. If this limit is some number $h^*$ strictly between $\max\{0,y-x\}$ and $\min\{1-x,y\}$, then we have 
\begin{equation}\label{eq:h*}
\cc_p(x-y+h^*,\psi(x)-\varphi(y))=h^*,
\end{equation} and one can solve this equation explicitly to find that $h^*=\ff_p(x,y)$, where we write $\ff_p$ instead of $\ff_p^{\varphi,\psi}$ for convenience (recall the definition from \eqref{eq:f}). This shows that 
\begin{equation}\label{eq:h_limit}
\lim_{n\to\infty}\h_{u_n}(x,y)=\min\{\max\{0,y-x,\ff_p(x,y)\},1-x,y\}=\hh_p(x,y)
\end{equation} with probability $1$, which completes the proof of the second statement in \cref{thm:NewMPPY}. The first statement concerning the existence of the permuton $\zeta_p^{\DD}$ follows immediately since a limit of permuton height functions is a permuton height function. 

We now wish to prove \cref{thm:fluctuations}, which concerns the fluctuations of $\h_{u_n}(x,y)$ as $n\to\infty$. Suppose $(x,y)$ belongs to the set $\mathscr K^{\varphi,\psi}$ defined in \eqref{eq:K}. This implies that \[\hh_p(x,y)=\cc_p(x-y+\hh_p(x,y),\psi(x)-\varphi(y))\] 
For $R\in\mathbb R$, it follows from \cref{thm:TASEP} and \eqref{eq:lim_x-y+h} that  
\begin{align*}
&\phantom{{}={}}\lim_{n\to\infty}\mathbb P\left(\h_{u_n}(x,y)\geq\hh_p(x,y)-Rn^{-2/3}\right) \\ 
&=\lim_{n\to\infty}\mathbb P\left(\overline\xi_{\left\lfloor n(x-y+\hh_p(x,y))-Rn^{1/3}\right\rfloor}\left( n(\psi(x)-\varphi(y))\right)\geq n\hh_p(x,y)-Rn^{1/3}\right) \\ 
&=\lim_{n\to\infty}\mathbb P\left(\overline\xi_{\left\lfloor n(x-y+\hh_p(x,y))-Rn^{1/3}\right\rfloor}\left( n(\psi(x)-\varphi(y))\right)\geq n\cc_p(x-y+\hh_p(x,y),\psi(x)-\varphi(y))-Rn^{1/3}\right) 
\\ &= F_2\left(R\left(\frac{1}{\mathsf{q}_p(x-y+\hh_p(x,y),\psi(x)-\varphi(y))}+\frac{1}{\mathsf{r}_p(x-y+\hh_p(x,y),\psi(x)-\varphi(y))}\right)\right) \\ 
&= F_2(R/\vv_p(x-y+\hh_p(x,y),\psi(x)-\varphi(y))), 
\end{align*}
where the last equality follows from \eqref{eq:v} and \eqref{eq:qr} (with ${\sf m}=x-y+\hh_p(x,y)$ and ${\sf t}=\psi(x)-\varphi(y)$) along with basic algebraic manipulations. Setting $r=R/\vv_p(x-y+\hh_p(x,y),\psi(x)-\varphi(y))$, this completes the proof of \cref{thm:fluctuations}. 

Note that $\mathscr K^{\varphi,\psi}$ is the set of points $(x,y)\in[0,1]^2$ such that the supremum in \eqref{eq:h_limit} is some number $h^*$ strictly between $\max\{0,y-x\}$ and $\min\{1-x,y\}$. It follows that \[[0,1]^2\setminus\mathscr K^{\varphi,\psi}=\PP_{\searrow}^{\varphi,\psi}\cup\PP_{\swarrow}^{\varphi,\psi}\cup\PP_{\nearrow}^{\varphi,\psi}\cup\PP_{\nwarrow}^{\varphi,\psi},\]
where 
\begin{align*}
\PP_{\searrow}^{\varphi,\psi}&:=\{(x,y)\in[0,1]^2:\cc_p(x-y,\psi(x)-\varphi(y))=0\}; \\ 
\PP_{\swarrow}^{\varphi,\psi}&:=\{(x,y)\in[0,1]^2:\cc_p(x,\psi(x)-\varphi(y))\geq y\}; \\ 
\PP_{\nearrow}^{\varphi,\psi}&:=\{(x,y)\in[0,1]^2:\cc_p(1-y,\psi(x)-\varphi(y))\geq 1-x\}; \\ 
\PP_{\nwarrow}^{\varphi,\psi}&:=\{(x,y)\in[0,1]^2:\cc_p(0,\psi(x)-\varphi(y))\leq y-x\}.  
\end{align*}
The first and last of these regions have even more explicit descriptions. Namely, 
\begin{equation}\label{eq:simplified_R}
\PP_{\searrow}^{\varphi,\psi}=\{(x,y)\in[0,1]^2:p(\psi(x)-\varphi(y))\leq x-y\}\end{equation} 
and 
\begin{equation}\label{eq:simplified_R2}
\PP_{\nwarrow}^{\varphi,\psi}=\{(x,y)\in[0,1]^2:p(\psi(x)-\varphi(y))/(1-p)\leq y-x\}.
\end{equation} 

\begin{remark}\label{rem:symmetry}
It will be helpful to record a certain symmetry. Given a permuton $\mu$, let $\widehat \mu$ be the permuton obtained by ``rotating'' $\mu$ by $180^\circ$. More precisely, $\widehat\mu$ is the permuton such that $\h_{\widehat\mu}(x,y)=x-y+\h_{\mu}(x,y)$ for all $(x,y)\in[0,1]^2$. Now suppose $\sS$ is a shape whose boxes have contents lying in $[n-1]$, and let $\widehat\sS$ be the shape obtained by reflecting $\sS$ through the line $\mathcal L_{n/2}$. If ${\bf w}(\sS)=(i_1,i_2,\ldots,i_k)$, then ${\bf w}(\widehat{\mathscr{S}}\,\,)$ is commutation equivalent to $(n-i_1,n-i_2,\ldots,n-i_k)$. This implies that $\Delta_p(\widehat \sS\,\,)$ has the same distribution as $\delta_n\Delta_p(\widehat{\sS}\,\,)\delta_n$, where $\delta_n$ is the decreasing permutation in $\SSS_n$. It follows from \cref{thm:NewMPPY} that this symmetry passes to limiting permutons when we take the scaling limits of shapes. More precisely, we have 
\[\zeta_p^{\widehat\DD}=\widehat{\zeta_p^{\DD}}\] for every $\DD\in\RR$, where $\widehat\DD$ is the region in $\RR$ obtained by reflecting $\DD$ through the line $\mathcal L_{1/2}$. 
\end{remark} 

\subsection{Rectangle Shapes and Peridot Permutons}\label{subsec:gems} 

Let us now work through an explicit example in detail. Fix $0<\beta<1$. We will take the scaling limit $\DD$ of our shapes to be an axis-parallel rectangle of height $1-\beta$ and width $\beta$, which we denote by $\Rect^\beta$. 
Thus, 
\[\varphi(z)=\begin{cases}
    0 & \text{if $0\leq z\leq1-\beta$}; \\
    z-1+\beta & \text{if $1-\beta\leq z\leq 1$} 
\end{cases}\quad\text{and}\quad \psi(z)=\begin{cases}
    z & \text{if $0\leq z\leq \beta$}; \\
    \beta & \text{if $\beta\leq z\leq 1$}.  
\end{cases}\]
We can take the path $\Lambda_{\swarrow}^{(n)}$ to consist of $\left\lfloor(1-\beta) n\right\rfloor$ south steps followed by $\left\lceil\beta n\right\rceil$ east steps. Similarly, we take the path $\Lambda_{\nearrow}^{(n)}$ to consist of $\left\lceil\beta n\right\rceil$ east steps followed by $\left\lfloor(1-\beta) n\right\rfloor$ south steps. 

For $(x,y)\in[0,1]^2$, we have 
\[\psi(x)-\varphi(y)=\begin{cases}
    x & \text{if $(x,y)\in[0,\beta]\times[0,1-\beta]$}; \\
    x-y+1-\beta & \text{if $(x,y)\in[0,\beta]\times[1-\beta,1]$}; \\
    \beta & \text{if $(x,y)\in[\beta,1]\times[0,1-\beta]$}; \\
    1-y & \text{if $(x,y)\in[\beta,1]\times[1-\beta,1]$}. 
\end{cases}\]
Note that $\psi(x)-\varphi(y)\leq \min\{x,1-y\}$. It follows that $\cc_p(x,\psi(x)-\varphi(y))=\cc_p(1-y,\psi(x)-\varphi(y))=0$, so \[\PP_{\swarrow}^{\varphi,\psi}=\{(x,y)\in[0,1]^2:y=0\}\quad\text{and}\quad \PP_{\nearrow}^{\varphi,\psi}=\{(x,y)\in[0,1]^2:x=1\}.\] 

By \eqref{eq:simplified_R}, the region $\PP_{\searrow}^{\varphi,\psi}$ is determined by the inequality $p(\psi(x)-\varphi(y))\leq x-y$. The northwest boundary of $\PP_{\searrow}^{\varphi,\psi}$ is the piecewise-linear curve that starts at $(0,0)$, travels through \[[0,\beta]\times[0,1-\beta]\] along a line segment of slope $1-p$, then travels through \[{([0,\beta]\times[1-\beta,1])\cup([\beta,1]\times[0,1-\beta])}\] along a line segment of slope $1$, and then travels through \[[\beta,1]\times[1-\beta,1]\] along a line segment of slope $1/(1-p)$ until reaching $(1,1)$. The middle line segment travels with slope $1$ through $[0,\beta]\times[1-\beta,1]$ if $\beta\geq 1/(2-p)$, and it travels with slope $1$ through $[\beta,1]\times[0,1-\beta]$ if $\beta\leq 1/(2-p)$. (If $\beta=1/(2-p)$, then the middle line segment is just the single point $(\beta,1-\beta)$.)  

By \eqref{eq:simplified_R2}, the region $\PP_{\nwarrow}^{\varphi,\psi}$ is determined by the inequality $p(\psi(x)-\varphi(y))/(1-p)\leq y-x$. The southeast boundary of $\PP_{\nwarrow}^{\varphi,\psi}$ is the piecewise-linear curve that starts at $(0,0)$, travels through \[{[0,\beta]\times[0,1-\beta]}\] along a line segment of slope $1/(1-p)$, then travels through \[{([0,\beta]\times[1-\beta,1])\cup([\beta,1]\times[0,1-\beta])}\] along a line segment of slope $1$, and then travels through \[[\beta,1]\times[1-\beta,1]\] along a line segment of slope $1-p$ until reaching $(1,1)$. The middle line segment travels with slope $1$ through $[0,\beta]\times[1-\beta,1]$ if $\beta\geq(1-p)/(2-p)$, and it travels with slope $1$ through $[\beta,1]\times[0,1-\beta]$ if $\beta\leq (1-p)/(2-p)$. (If $\beta=(1-p)/(2-p)$, then the middle line segment is just the single point $(\beta,1-\beta)$.) 

\begin{figure}[]
  \begin{center}
  \includegraphics[height=5cm]{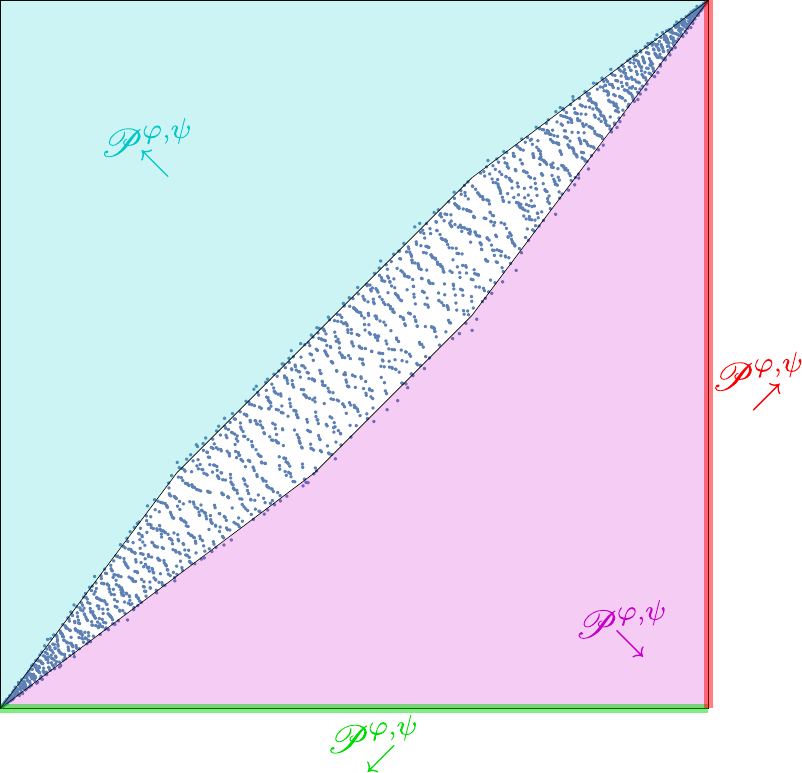}\quad\includegraphics[height=5cm]{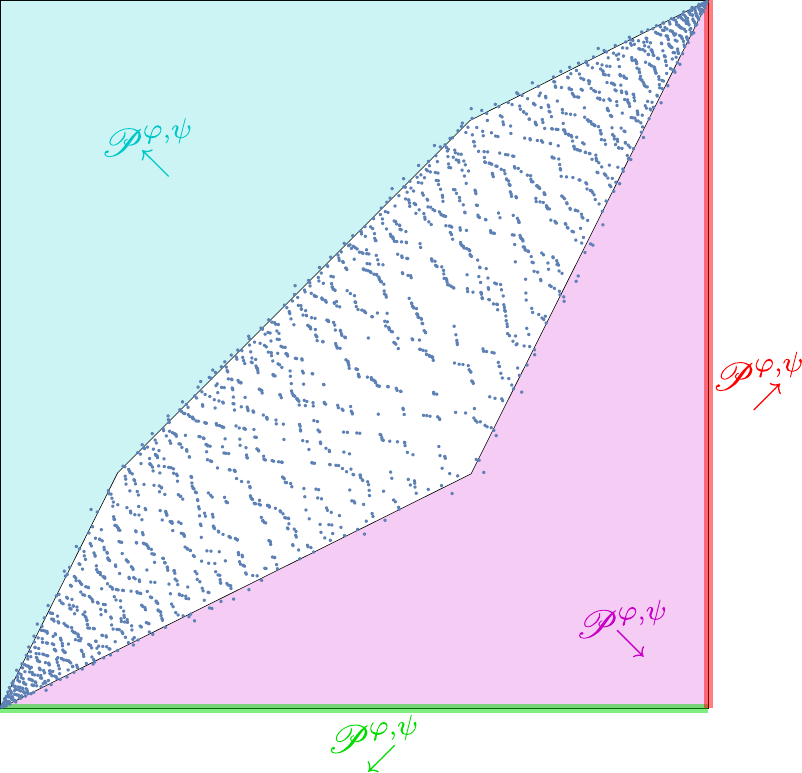}\quad\includegraphics[height=5cm]{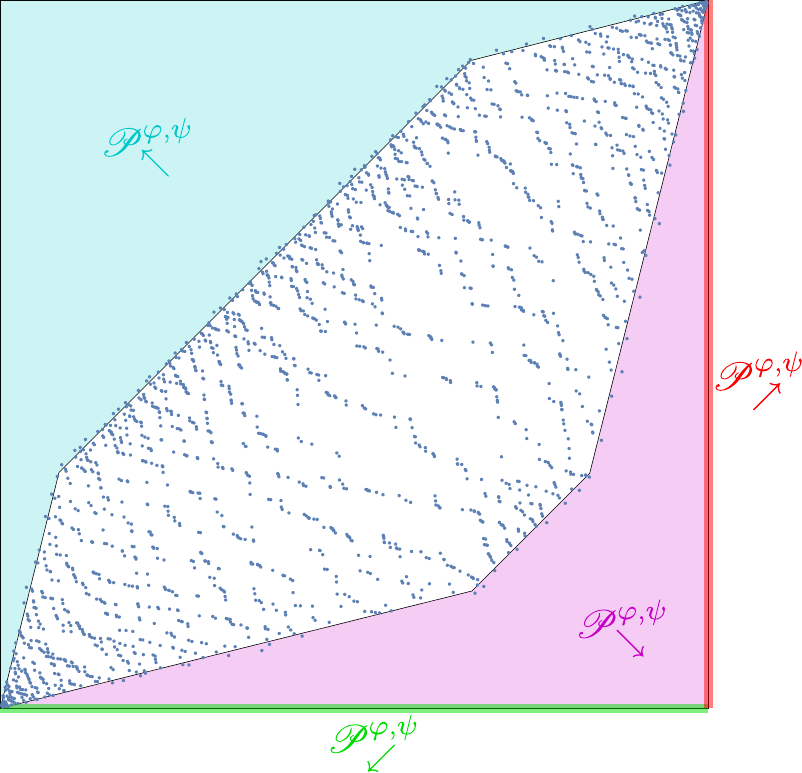}
  \end{center}
\caption{Plots of permutations $\Delta_{1/4}(\sS)$ (left), $\Delta_{1/2}(\sS)$ (middle), and $\Delta_{3/4}(\sS)$ (right) in $\SSS_{2400}$, where $\sS$ is an $800\times 1600$ rectangle shape. Each plot approximates a peridot permuton with parameter ${\beta=2/3}$. We have shaded the regions $\PP_{\searrow}^{\varphi,\psi},\PP_{\swarrow}^{\varphi,\psi},\PP_{\nearrow}^{\varphi,\psi},\PP_{\nwarrow}^{\varphi,\psi}$.}\label{fig:gems}  
\end{figure}

The support of the permuton $\zeta_p^{\Rect^\beta}$ is the closure of $\mathscr K^{\varphi,\psi}=[0,1]^2\setminus(\PP_{\searrow}^{\varphi,\psi}\cup\PP_{\swarrow}^{\varphi,\psi}\cup\PP_{\nearrow}^{\varphi,\psi}\cup\PP_{\nwarrow}^{\varphi,\psi})$. We call $\zeta_p^{\Rect^\beta}$ a \dfn{peridot permuton}; see \cref{fig:gems}.

\subsection{Trapezoid Shapes and Polyphemus Permutons}\label{subsec:malts} 
For our second explicit example, we take the scaling limit $\DD$ of our shapes to be a trapezoid with two opposite sides of slope $1$. Fix $\alpha,\beta\in[0,1]$ and $R\geq \max\{0,\beta-\alpha\}$, and define \[\varphi(z)=\alpha z\quad\text{and}\quad \psi(z)=\beta z+R.\]
The limiting trapezoidal shape $\DD$ has vertices $(0,0)$, $(R,R)$, $(\beta+R,\beta+R-1)$, and $(\alpha,\alpha-1)$. 

For $(x,y)\in[0,1]^2$, we have 
\[\psi(x)-\varphi(y)=\beta x-\alpha y+R.\]
By \eqref{eq:simplified_R}, the region $\PP_{\searrow}^{\varphi,\psi}$ is determined by the inequality $p(\psi(x)-\varphi(y))\leq x-y$, which simplifies to the linear inequality $(1-p\alpha)y+pR\leq (1-p\beta)x$. Similarly, by \eqref{eq:simplified_R2}, the region $\PP_{\nwarrow}^{\varphi,\psi}$ is determined by the inequality $p(\psi(x)-\varphi(y))/(1-p)\leq y-x$, which simplifies to the linear inequality $(1-p(1-\beta))x+pR\leq (1-p(1-\alpha))y$. 

The equation 
\[
(x-y+p(\beta x+(1-\alpha)y+R))^2=4px(\beta x-\alpha y+R)
\] defines a conic that is tangent to the $y$-axis and the $x$-axis at the points 
\begin{equation}\label{eq:tangent1}
\left(0,\frac{pR}{1-p(1-\alpha)}\right)\quad \text{and}\quad\left(\frac{pR}{1-p\beta},0\right),
\end{equation} respectively. Let $\mathscr{E}_{\swarrow}$ denote the concave-up arc of this conic whose endpoints are the points in \eqref{eq:tangent1}. (When $R=0$, the curve $\mathscr{E}_{\swarrow}$ degenerates to the single point $(0,0)$.) 
The region $\PP_{\swarrow}^{\varphi,\psi}$ consists of the points $(x,y)\in[0,1]^2$ that lie weakly below $\mathscr{E}_{\swarrow}$ or satisfy $y=0$. 

The equation 
\[
(x-y+p(1+R-(1-\beta) x-\alpha y))^2=4p(1-y)(\beta x-\alpha y+R)
\] defines a conic that is tangent to the lines $\{(1,y):y\in\mathbb R\}$ and $\{(x,1):x\in\mathbb R\}$ at the points 
\begin{equation}\label{eq:tangent2}
\left(1,\frac{1-p\beta-pR}{1-p\alpha}\right)\quad \text{and}\quad\left(\frac{1-p(1-\alpha)-pR}{1-p(1-\beta)},1\right),
\end{equation} respectively. Let $\mathscr{E}_{\nearrow}$ denote the concave-down arc of this conic whose endpoints are the points in \eqref{eq:tangent2}. (When $R=\alpha-\beta$, the curve $\mathscr{E}_{\nearrow}$ degenerates to the single point $(1,1)$.) 
The region $\PP_{\nearrow}^{\varphi,\psi}$ consists of the points $(x,y)\in[0,1]^2$ that lie weakly above $\mathscr{E}_{\nearrow}$ or satisfy $x=1$. 

If \[R\geq\frac{2-(1-\alpha+\beta)p+\sqrt{(1-\alpha-\beta)^2p^2+4(1-p)}}{2p},\] then $\mathscr{E}_{\swarrow}$ lies weakly above the line $\{(x,1-x):x\in\mathbb R\}$ while $\mathscr{E}_{\nearrow}$ lies weakly below this line. In this case, it follows that $\PP_{\swarrow}^{\varphi,\psi}\cap\PP_{\nearrow}^{\varphi,\psi}$ contains the entire line segment $\{(x,1-x):x\in[0,1]\}$, and it is straightforward to show that the permuton $\zeta_p^\DD$ is the uniform measure on this line segment (i.e., the permuton limit of decreasing permutations). 

Now assume \[R<\frac{2-(1-\alpha+\beta)p+\sqrt{(1-\alpha-\beta)^2p^2+4(1-p)}}{2p}.\] Let $\mathcal V_{\LLT}$ be the union of the sides of the triangle with vertices $(0,0)$, $(0,1)$, and $(1,0)$. Let $\mathcal V_{\URT}$ be the union of the sides of the triangle with vertices $(1,1)$, $(0,1)$, and $(1,0)$. 
The intersection $\mathscr E_{\swarrow}\cap\mathcal V_{\LLT}$ contains two points $({\sf x}_\swarrow^{(1)},{\sf y}_\swarrow^{(1)})$ and $({\sf x}_\swarrow^{(2)},{\sf y}_\swarrow^{(2)})$ satisfying ${\sf x}_{\swarrow}^{(1)}\leq {\sf x}_{\swarrow}^{(2)}$ and ${\sf y}_{\swarrow}^{(1)}\geq {\sf y}_{\swarrow}^{(2)}$; these points coincide if and only if $R=0$, in which case they are the point $(0,0)$. The intersection $\mathscr E_{\nearrow}\cap\mathcal V_{\URT}$ contains two points $({\sf x}_{\nearrow}^{(1)},{\sf y}_{\nearrow}^{(1)})$ and $({\sf x}_{\nearrow}^{(2)},{\sf y}_{\nearrow}^{(2)})$ satisfying ${\sf x}_{\nearrow}^{(1)}\leq {\sf x}_{\nearrow}^{(2)}$ and ${\sf y}_{\nearrow}^{(1)}\geq {\sf y}_{\nearrow}^{(2)}$; these points coincide if and only if $R=\alpha-\beta$, in which case they are the point $(1,1)$. If $pR>1-p(1-\alpha)$, then the points $({\sf x}_{\swarrow}^{(1)},{\sf y}_{\swarrow}^{(1)})$ and $({\sf x}_{\nearrow}^{(1)},{\sf y}_{\nearrow}^{(1)})$ coincide and lie on the line segment with endpoints $(0,1)$ and $(1,0)$; in this case, the permuton $\mu^{\varphi,\psi}$ has a singular part that is the uniform measure on the line segment with endpoints $(0,1)$ and $({\sf x}_{\swarrow}^{(1)},{\sf y}_{\swarrow}^{(1)})$. Similarly, if $pR>1-p\beta$, then the points $({\sf x}_{\swarrow}^{(2)},{\sf y}_{\swarrow}^{(2)})$ and $({\sf x}_{\nearrow}^{(2)},{\sf y}_{\nearrow}^{(2)})$ coincide and lie on the line segment with endpoints $(0,1)$ and $(1,0)$; in this case, the permuton $\mu^{\varphi,\psi}$ has a singular part that is the uniform measure on the line segment with endpoints $(1,0)$ and $({\sf x}_{\swarrow}^{(2)},{\sf y}_{\swarrow}^{(2)})$. 
The permuton $\zeta_p^\DD$ also has a singular part on 
\[\left(\mathscr{E}_{\swarrow}\cap\{(x,y)\in[0,1]^2:y\leq 1-x\}\right)\cup\left(\mathscr{E}_{\nearrow}\cap\{(x,y)\in[0,1]^2:y\geq 1-x\}\right).\] As in \cite[Propositions~4.2~and~4.4]{MPPY}, one can in principal compute the mass on this singular part explicitly by considering the partial derivatives of the height function $\hh_p=\h_{\zeta_p^\DD}$.  

When $\alpha=0$, $\beta=1$, and $R=0$, the curve $\mathscr E_{\swarrow}$ degenerates to the single point $(0,0)$, and the limiting permuton $\zeta_p^\DD$ is precisely the Grothendieck permuton (with parameter $p$) from \cite{MPPY}. In general, if we fix $p,\alpha,\beta$ and gradually increase $R$ (starting at $\min\{0,\alpha-\beta\}$), the shape of the permuton will transform from a shape resembling an ice cream cone to a shape resembling a closing eye. We call $\zeta_p^\DD$ a \dfn{Polyphemus permuton}; see \cref{fig:trapezoid_example_plot}.

\begin{figure}[]
  \begin{center}
  \includegraphics[height=5cm]{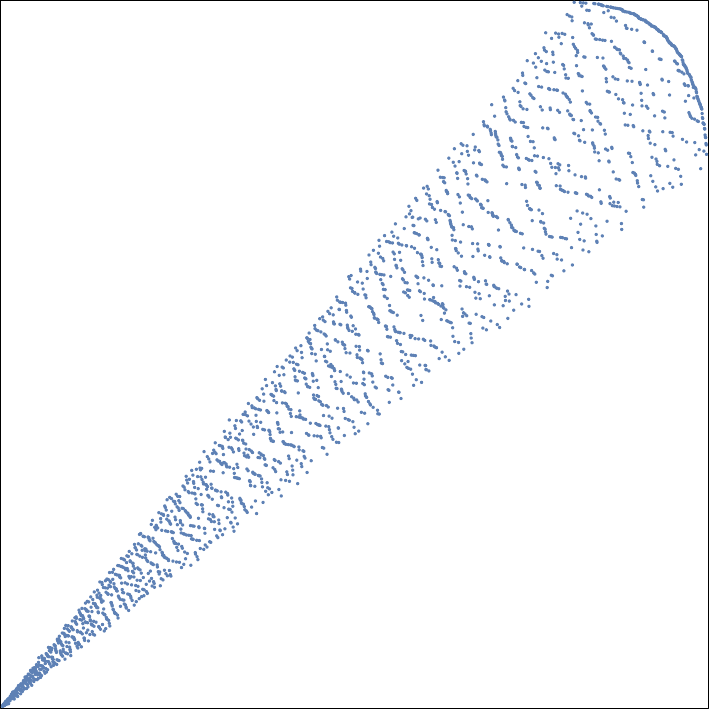}\quad\includegraphics[height=5cm]{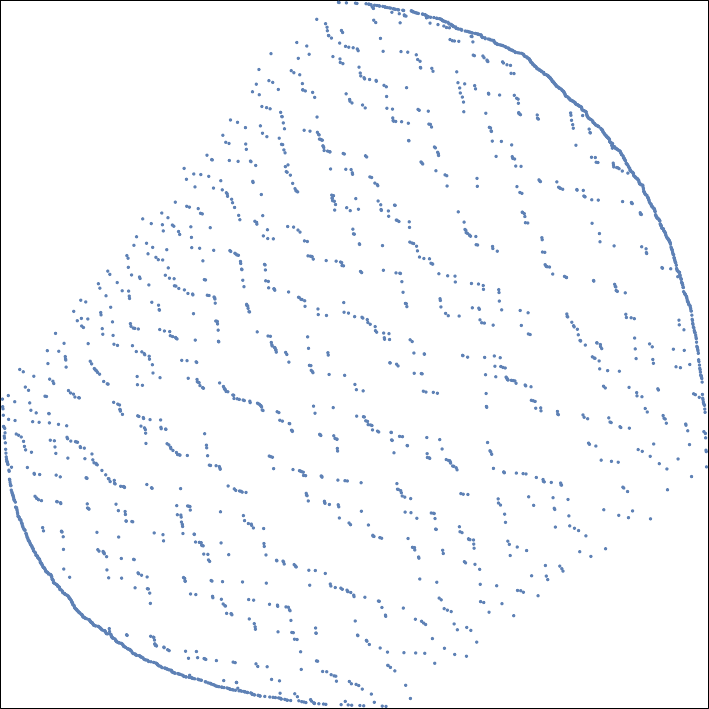}\quad\includegraphics[height=5cm]{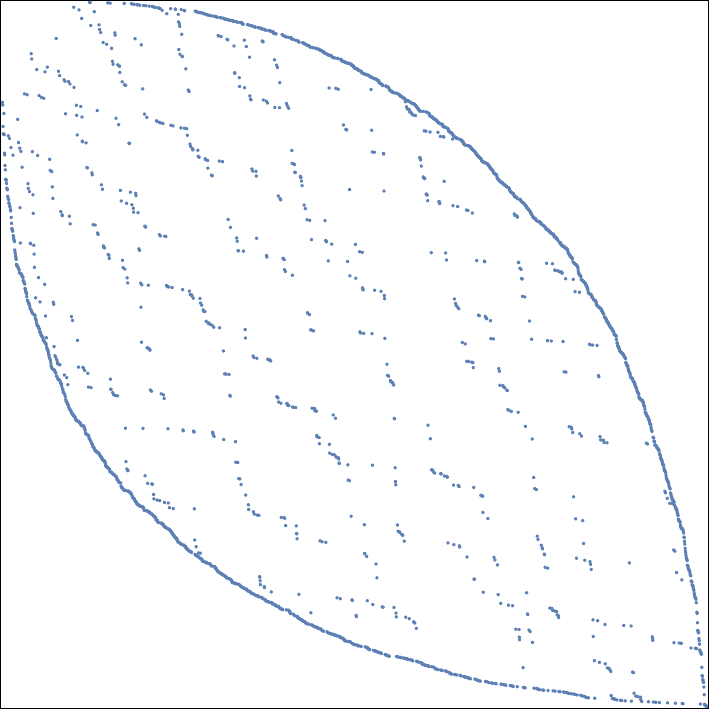} \\ \vspace{0.3cm}
    \includegraphics[height=5cm]{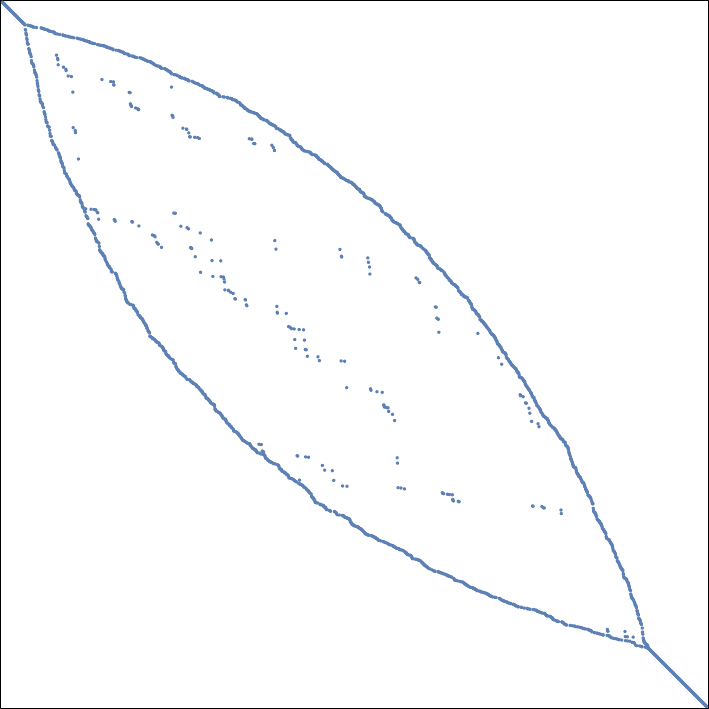}\quad\includegraphics[height=5cm]{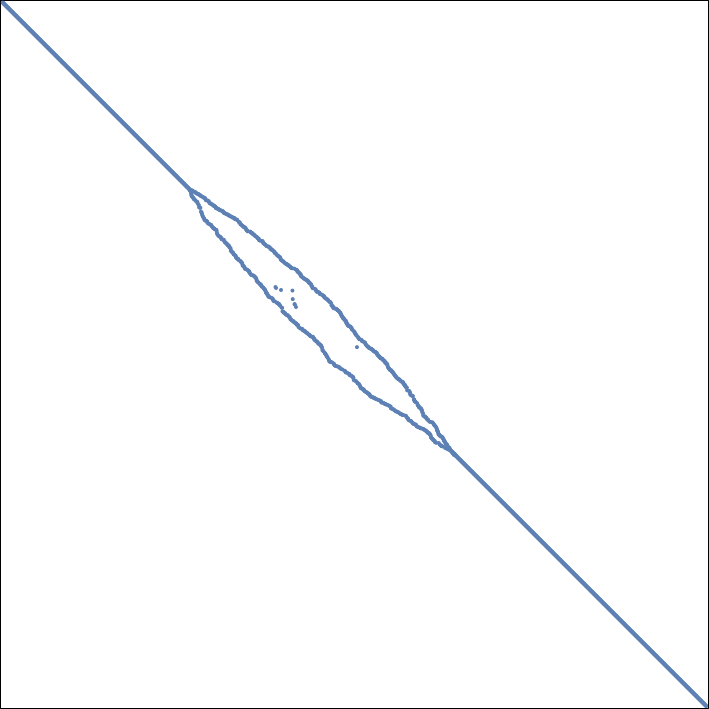}\quad\includegraphics[height=5cm]{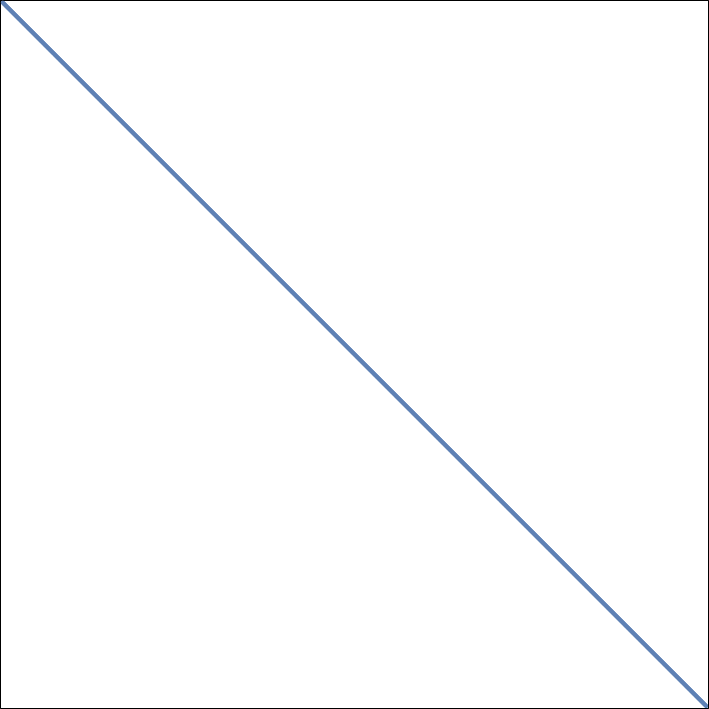}
  \end{center}
\caption{Plots of permutations in $\SSS_{2400}$ that approximate Polyphemus permutons with parameters $p=3/5$, $\alpha=1/2$, $\beta=3/4$. The values of $R$ in the top row are $0,1/2,1$, while the values of $R$ in the bottom row are $3/2,2,5/2$. }\label{fig:trapezoid_example_plot}  
\end{figure}

\subsection{Non-Order-Convex Shapes }\label{subsec:penguins} 

Suppose $\sS$ is a shape that is not order-convex. We can still consider the Demazure product $\Delta_p(\sS)$ of the random subword $\sub_p({\bf w}(\sS))$. This Demazure product can also be computed graphically using pipe dreams. As before, we fill the boxes in $\sS$ independently at random so that each box is filled with a cross tile $\Cross$ with probability $p$ and with a bump tile $\Bump$ with probability $1-p$. The difference is that we now need to add extra {\color{Gold}golden} bump tiles $\GoldBump$ (deterministically) in order to complete $\sS$ to an order-convex shape. This is illustrated in \cref{fig:nonconvex}. We once again resolve crossings when computing the Demazure product. 

\begin{figure}[]
  \begin{center}
  \includegraphics[height=6.375cm]{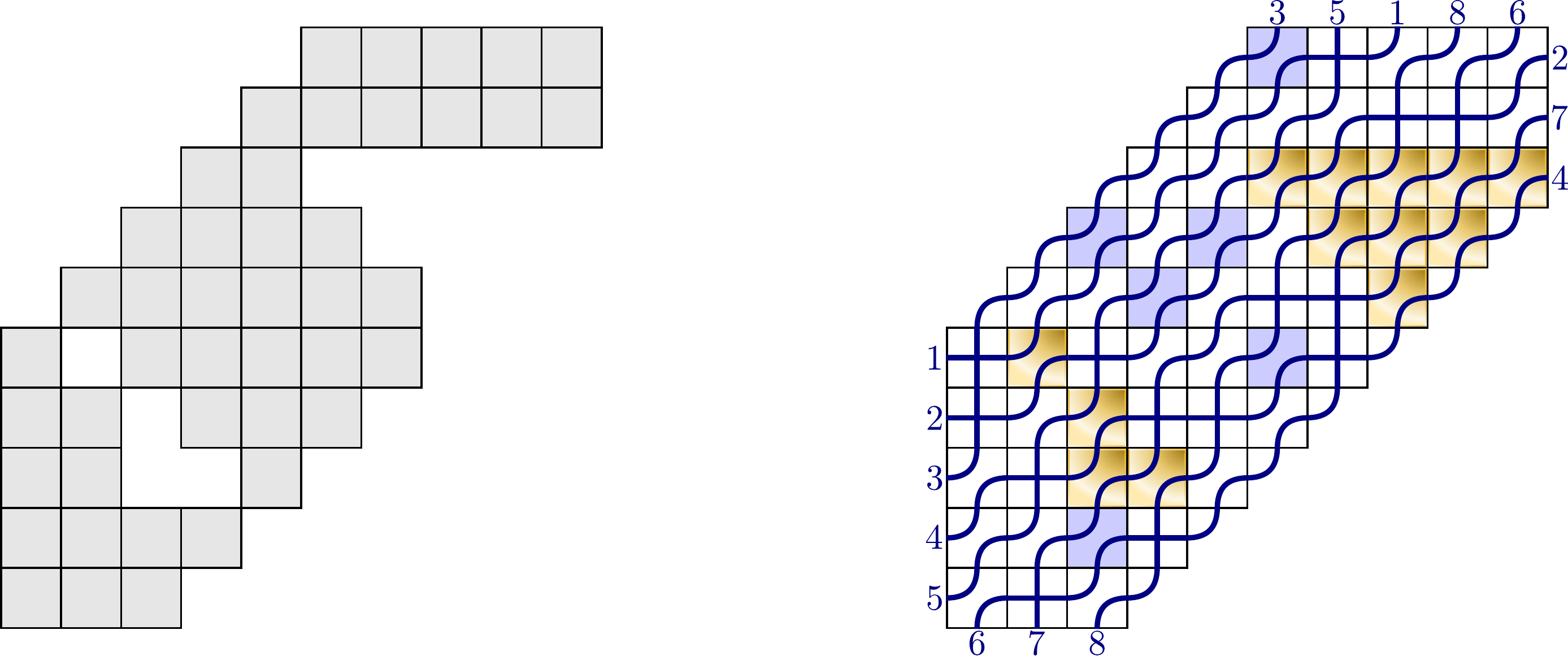}
  \end{center}
\caption{On the left is a shape that is not order-convex; its boxes are lightly shaded. On the right, we have randomly filled the boxes of the shape on the left with cross tiles and bump tiles. We have then added extra ({\color{Gold}golden}) bump tiles to form an order-convex shape. We have then resolved some of the crossings to compute the Demazure product, which is $35186274$. }\label{fig:nonconvex} 
\end{figure}

Now let $(\sS^{(n)})_{n\geq 1}$ be a sequence of shapes, where the contents of the boxes in $\sS^{(n)}$ belong to $[n-1]$. Suppose that $\frac{1}{n}\sS^{(n)}$ converges to some limit shape $\DD$. Let $u_n=\Delta_p(\sS^{(n)})$. We expect the height functions $\h_{u_n}$ to converge to the height function of some limiting permuton $\zeta_p^{\DD}$. However, we cannot directly compute this limiting permuton using the TASEP as we did in the proof of \cref{thm:NewMPPY}. To solve this issue, we employ the tools we developed in \cref{sec:Demazure_permutons}. Let us assume that $\DD$ can be written as $\DD_1\cup\cdots\cup\DD_r$, where the shapes $\DD_1,\ldots,\DD_r$ have disjoint interiors and each belong to the set $\RR$ defined in \eqref{eq:RR}. Assume also that for all $1\leq i<j\leq r$, no point in $\DD_i$ is northeast of a different point in $\DD_j$. By \cref{thm:NewMPPY}, each shape $\DD_i$ has an associated permuton $\zeta_p^{\DD_i}$. It follows from \cref{prop:Demazure_permuton} that \[\zeta_p^\DD=\zeta_p^{\DD_1}\star\cdots\star\zeta_p^{\DD_r}.\]
Since we can explicitly compute the height functions of the permutons $\zeta_p^{\DD_1},\ldots,\zeta_p^{\DD_r}$ using \cref{thm:NewMPPY}, this allows us to explicitly compute the height function of $\zeta_p^{\DD}$. We leave this task to the motivated reader, contenting ourselves here by simply presenting pictures that showcase some of the exotic permutons that can be created in this manner. 

\begin{figure}[]
  \begin{center}
  \includegraphics[height=3cm]{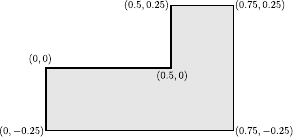} \qquad\qquad\qquad \includegraphics[height=2.778cm]{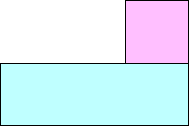} \\ \vspace{0.3cm}
  \includegraphics[height=5cm]{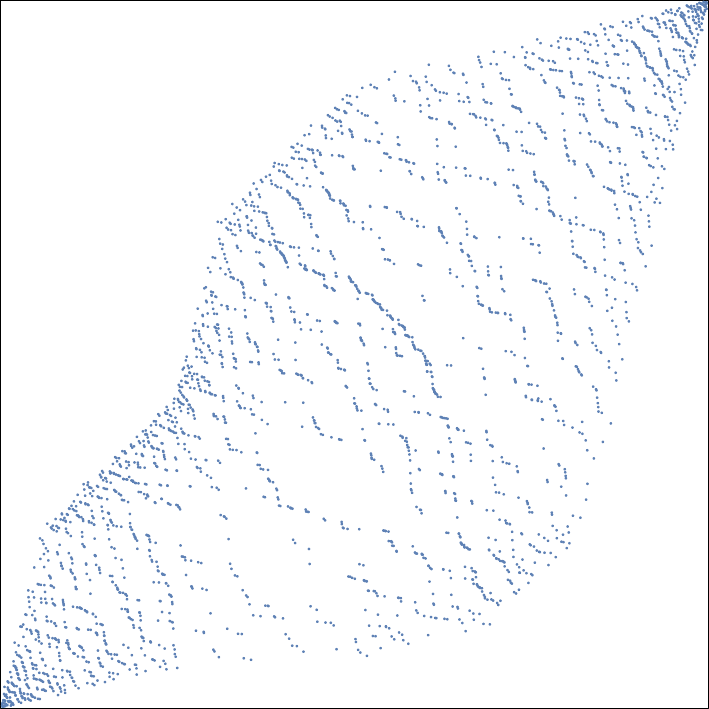}\quad\includegraphics[height=5cm]{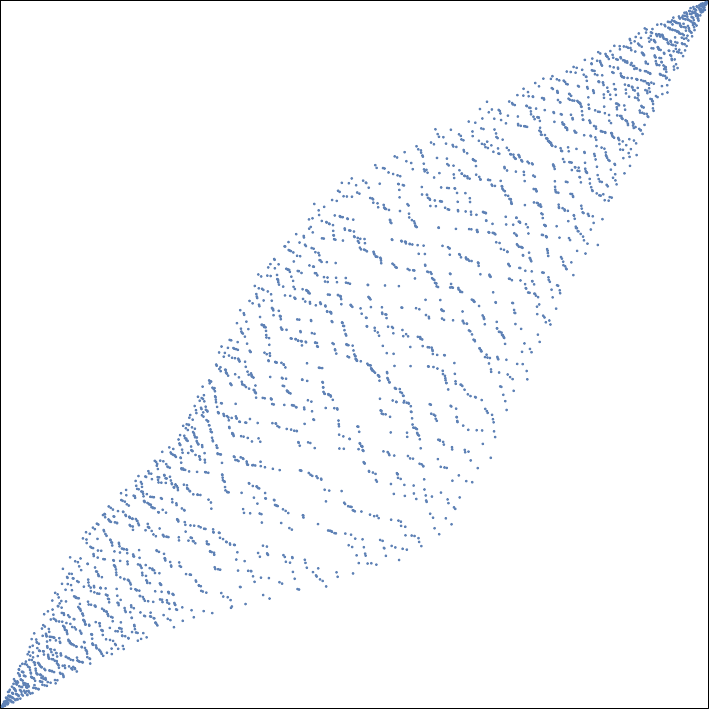}\quad\includegraphics[height=5cm]{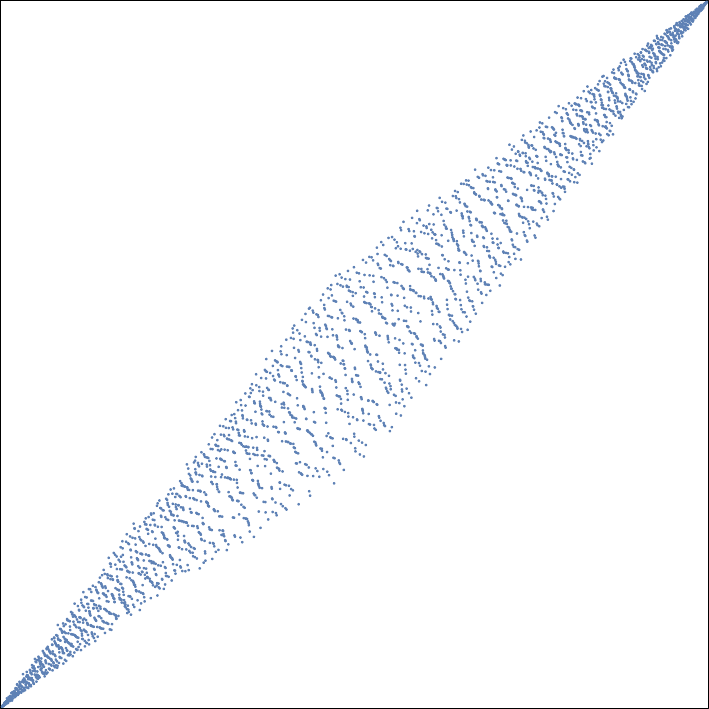} 
  \end{center}
\caption{On the top left is a limit shape $\Le$ that can be decomposed into two rectangles as illustrated on the top right. On the bottom are plots of permutations in $\SSS_{3200}$ that approximate the limiting permutons $\zeta_p^{\Le}$ for $p=1/4$ (left), $p=1/2$ (middle), and $p=3/4$ (right). We call these limiting permutons $\zeta_p^{\Le}$ \dfn{Platyhelminthes permutons}.}\label{fig:L-shape} 
\end{figure}

\begin{example}\label{exam:penguins}
Consider the limit shape $\Le$ shown on the top left of \cref{fig:L-shape}. 
We can decompose this shape into two rectangles as on the top right of \cref{fig:L-shape}. For each $p\in(0,1)$, one could compute the limiting permuton $\zeta_p^{\Le}$ by combining the explicit computations from \cref{subsec:gems} with \cref{prop:Demazure_permuton}. The bottom of \cref{fig:L-shape} shows plots of permutations in $\SSS_{3200}$ that approximate $\zeta_p^{\Le}$ for $p=1/4$, $p=1/2$, and $p=3/4$. 
\end{example}

\begin{figure}[]
  \begin{center}
  \includegraphics[height=2.772cm]{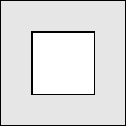} \qquad\qquad\qquad \includegraphics[height=2.778cm]{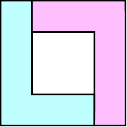} \\ \vspace{0.3cm} 
    \includegraphics[height=5cm]{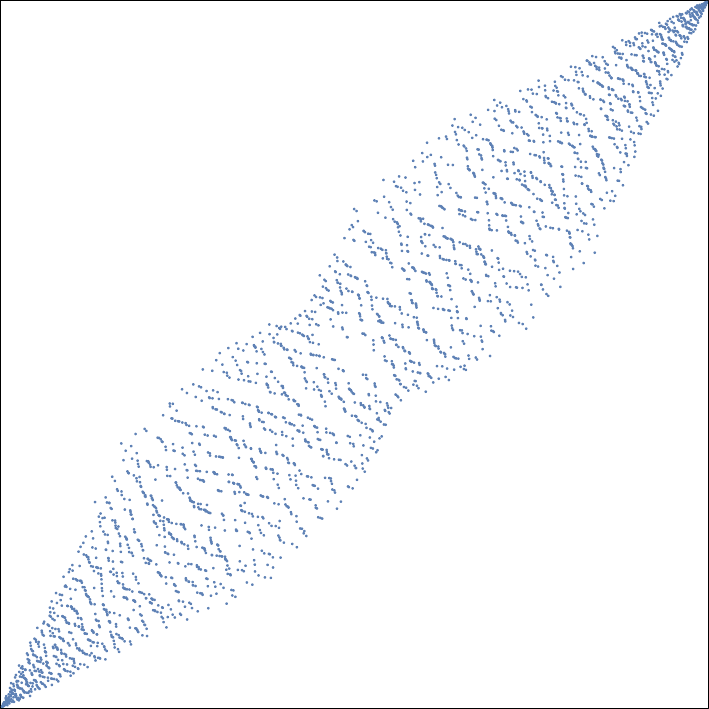}\quad\includegraphics[height=5cm]{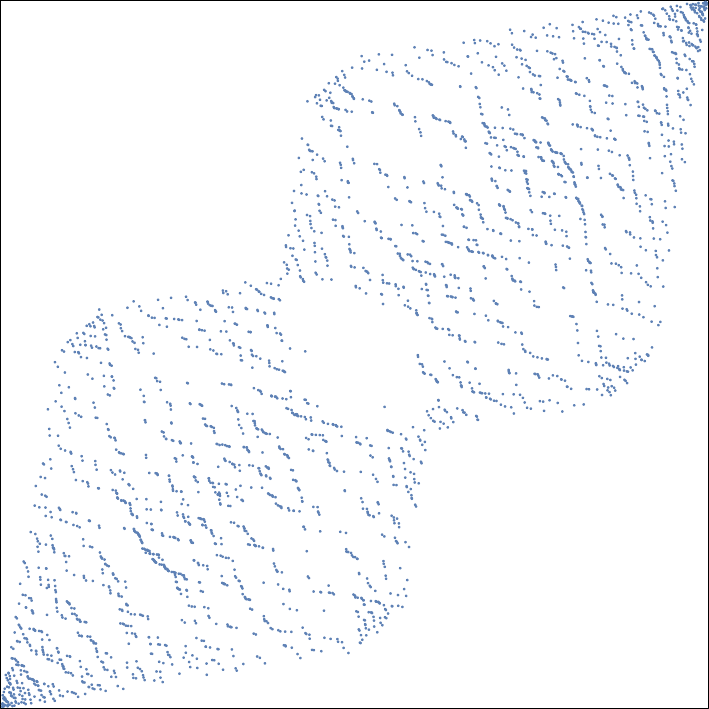}\quad\includegraphics[height=5cm]{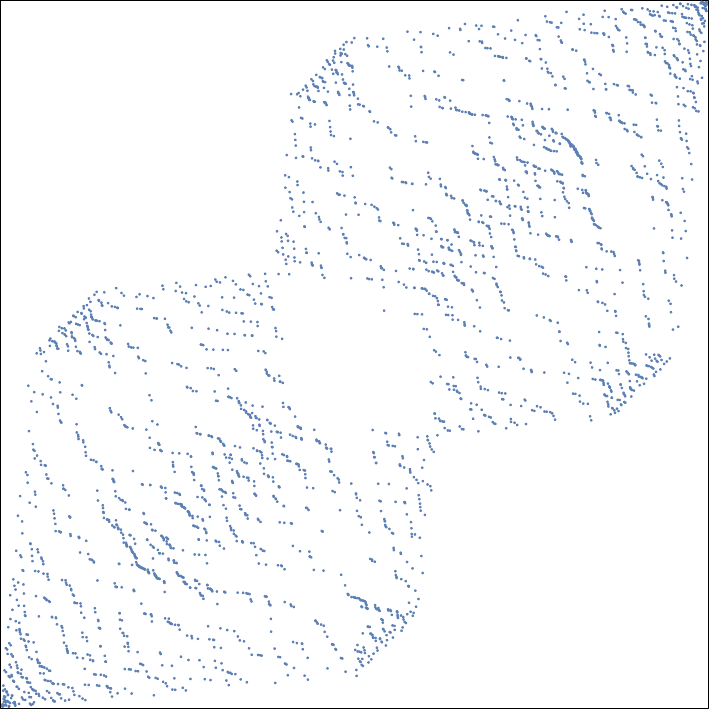} 
  \end{center}
\caption{On the top left is a limit shape $\Hole$ that can be decomposed into two order-convex regions as illustrated on the top right. On the bottom are plots of permutations in $\SSS_{3200}$ that approximate the limiting permutons $\zeta_p^{\Hole}$ for $p=1/2$ (left), $p=5/6$ (middle), and $p=9/10$ (right). We call these limiting permutons $\zeta_p^{\Hole}$ \dfn{pointy peanut permutons}. }\label{fig:hole} 
\end{figure}

\begin{example}\label{exam:knives}
Consider the limit shape $\Hole$ shown on the top left of \cref{fig:hole}. 
We can decompose this shape into two order-convex regions as on the top right of \cref{fig:hole}. For each $p\in(0,1)$, one could compute the limiting permuton $\zeta_p^{\Hole}$ by combining \cref{thm:NewMPPY,prop:Demazure_permuton} (alternatively, one could decompose $\Hole$ into four rectangles and appeal to the explicit computations from \cref{subsec:gems}). The bottom of \cref{fig:hole} shows plots of permutations in $\SSS_{3200}$ that approximate $\zeta_p^{\Hole}$ for $p=1/2$, $p=5/6$, and $p=9/10$. 
\end{example} 

\section{Bubble-Sort Permutons}\label{sec:Bubble} 

With the machinery we have developed so far, we can easily prove \cref{thm:BubbleGeneral}. Let us preserve the notation from the statement of that theorem. Note that $\tau_{{\bf w}(\sS^{(n)})}(v_n)=v_n\star\Delta_1(\sS^{(n)})$. \cref{thm:NewMPPY} tells us that for each point $(x,y)\in[0,1]^2$, we have $\lim_{n\to\infty}\h_{\Delta_1(\sS^{(n)})}(x,y)=\hh_1^{\varphi,\psi}(x,y)$ with probability $1$. The first statement in \cref{thm:BubbleGeneral} now follows directly from \cref{prop:Demazure_permuton}. To deduce the second statement, note that if $(v_n)_{n\geq 1}$ is a sequence of independent permutations such that $v_n$ is chosen uniformly at random from $\SSS_n$, then $\mu$ is the uniform permuton $\upsilon$. In this case, the permuton limit of $(\pi_{u_n})_{n\geq 1}$ is $\upsilon\star\zeta_1^\DD$, where $\DD=\mathscr{R}^{\varphi,\psi}\in\RR$. 

\begin{remark}\label{rem:symmetry2}
Recall from \cref{rem:symmetry} that we write $\widehat\mu$ for the permuton obtained by rotating the permuton $\mu$ by $180^\circ$. Note that the uniform measure $\upsilon$ satisfies $\upsilon=\widehat\upsilon$. For any permutons $\mu$ and $\nu$, we have 
\[\widehat\mu\star\widehat\nu=\widehat{\mu\star\nu}.\]
Therefore, it follows from \cref{rem:symmetry} that 
\[\upsilon\star\zeta_p^{\widehat\DD}=\widehat\upsilon\star\widehat{\zeta_p^\DD}=\widehat{\upsilon\star\zeta_p^\DD}\] for every $\DD\in\RR$ and $p\in(0,1]$.   
\end{remark} 

The remainder of this section is devoted to proving \cref{thm:c-Bubble_Line,thm:rectangle}, which illustrate \cref{thm:BubbleGeneral} in special cases.

\subsection{Parallelogram Shapes}\label{subsec:parallelogram} 

Fix $\alpha>0$ and $\beta\in[0,1]$, and preserve the notation from \cref{thm:c-Bubble_Line}. We have $\varphi(z)=\beta z$ and $\psi(z)=\beta z+\alpha$ for all $z\in[0,1]$. Let $\Paral^{\alpha,\beta}=\mathscr{R}^{\varphi,\psi}$, and let $\rho^{\alpha,\beta}=\upsilon\star\zeta_1^{\Paral^{\alpha,\beta}}$, where $\upsilon$ is the uniform measure on $[0,1]^2$. The height function of $\rho^{\alpha,\beta}$ is given by 
\[\h_{\rho^{\alpha,\beta}}(x,y)=\min_{0\leq\gamma\leq 1}((1-\gamma)y+\hh_1(x,y)),\] where we write $\hh_1$ instead of $\hh_1^{\varphi,\psi}$. To deduce \cref{thm:c-Bubble_Line} from \cref{thm:BubbleGeneral}, we must show that $\rho^{\alpha,\beta}=\nu^{\alpha,\beta}$. That is, we must show that 
\begin{equation}\label{eq:parallelogram}
\h_{\nu^{\alpha,\beta}}(x,y)=\min_{0\leq\gamma\leq 1}\left((1-\gamma)y+\hh_1(x,\gamma)\right)
\end{equation} for every $(x,y)\in[0,1]^2$. If $x=1$ or $y=0$, then $\h_{\nu^{\alpha,\beta}}(x,y)$ and $\h_{\rho^{\alpha,\beta}}(x,y)$ are clearly both equal to $0$, so we will assume in what follows that $x<1$ and $y>0$. 

In order to reduce the number of cases we must consider, we will employ the symmetry discussed in \cref{rem:symmetry2}. Note that the parallelogram $\Paral^{\alpha,1-\beta}$ is a translation of the parallelogram obtained by reflecting $\Paral^{\alpha,\beta}$ through the line $\mathfrak L_{1/2}$. It follows that $\rho^{\alpha,1-\beta}=\widehat{\rho^{\alpha,\beta}}$. Because $\nu^{\alpha,1-\beta}=\widehat{\nu^{\alpha,\beta}}$, we only need to check \eqref{eq:parallelogram} when $y\leq 1-\beta$.  

Fix $(x,y)\in[0,1]^2$ with $x<1$ and $0<y\leq 1-\beta$. Note that \[\hh_1(x,\gamma)=\min\{\max\{0,\gamma-x,(1-\beta)(\gamma-x)+\alpha\},1-x,\gamma\}.\]
Let $Q(\gamma)=(1-\gamma)y+\hh_1(x,\gamma)$. View $\hh_1(x,\gamma)$ and $Q(\gamma)$ as functions of $\gamma$. It is straightforward to compute that $\hh_1(x,0)=0$ and $\hh_1(x,1)=1-x$. Therefore, there exist $q,q'\in[0,1]$ with $q\leq q'$ such that 
\[\{\gamma\in[0,1]:\hh_1(x,\gamma)=0\}=[0,q]\quad\text{and}\quad\{\gamma\in[0,1]:\hh_1(x,\gamma)=1-x\}=[q',1].\] Moreover, the function $Q$ is decreasing on the open intervals $(0,q)$ and $(q',1)$, and it is non-decreasing on $(q,q')$. This implies that the minimum value of $Q(\gamma)$ is attained when $\gamma=q$ or when $\gamma=1$. In other words, \[\min_{0\leq \gamma\leq 1}((1-\gamma)y+\hh_1(x,\gamma))=\min\{(1-q)y+\hh_1(x,q),\hh_1(x,1)\}=\min\{(1-q)y,1-x\}.\] 
Hence, our goal is to show that $\h_{\nu^{\alpha,\beta}}(x,y)=\min\{(1-q)y,1-x\}$. 

Suppose first that $x\geq\alpha/(1-\beta)$. Then it is straightforward to verify that $q=x-\alpha/(1-\beta)$. If \[y\leq 1-\frac{\alpha}{(1-\beta)(1-x)+\alpha}\] (meaning $(x,y)$ is in the region $\mathscr U_{\searrow}^{\alpha,\beta}$ defined in \eqref{eq:UU1}), then we have \[\h_{\nu^{\alpha,\beta}}(x,y)=(1-x+\alpha/(1-\beta))y=(1-q)y\] and ${(1-x+\alpha/(1-\beta))y\leq 1-x}$, which completes the proof. On the other hand, if \[y\geq 1-\frac{\alpha}{(1-\beta)(1-x)+\alpha}\] (meaning $(x,y)$ is above the region $\mathscr U_{\searrow}^{\alpha,\beta}$), then we have \[\h_{\nu^{\alpha,\beta}}(x,y)=1-x\] and ${(1-x+\alpha/(1-\beta))y\geq 1-x}$, which completes the proof. 

Now suppose $x\leq\alpha/(1-\beta)$. In this case, we have $\h_{\nu^{\alpha,\beta}}(x,y)=\min\{y,1-x\}$, so it suffices to show that $q=0$. Thus, we must show that $\hh_1(x,\gamma)>0$ for every $\gamma\in(0,1]$. Choose some $\gamma\in(0,1]$. We have 
\[
\max\{0,\gamma-x,(\gamma-x)(1-\beta)+\alpha\}\geq (\gamma-x)(1-\beta)+\alpha 
\geq\gamma(1-\beta) 
\geq\gamma y 
>0. 
\] 
Hence, 
\[\hh_1(x,\gamma)=\min\{\max\{0,\gamma-x,(\gamma-x)(1-\beta)+\alpha\},1-x,\gamma\}>0.\]

\subsection{Rectangle Shapes and Memory}\label{subsec:diary}  
Let us fix $\beta\in[0,1]$ and preserve the notation from \cref{thm:rectangle}. We have \[\varphi(z)=\begin{cases}
    0 & \text{if $0\leq z\leq1-\beta$}; \\
    z-1+\beta & \text{if $1-\beta\leq z\leq 1$} 
\end{cases}\quad\text{and}\quad \psi(z)=\begin{cases}
    z & \text{if $0\leq z\leq \beta$}; \\
    \beta & \text{if $\beta\leq z\leq 1$}  
\end{cases}\] for all $z\in[0,1]$. Let $\Rect^{\beta}=\mathscr{R}^{\varphi,\psi}$, and let $\kappa^{\beta}=\upsilon\star\zeta_1^{\Rect^{\beta}}$, where $\upsilon$ is the uniform measure on $[0,1]^2$. The height function of $\kappa^{\beta}$ is given by 
\[\h_{\kappa^{\beta}}(x,y)=\min_{0\leq\gamma\leq 1}((1-\gamma)y+\hh_1(x,y)),\] where we write $\hh_1$ instead of $\hh_1^{\varphi,\psi}$. To deduce \cref{thm:rectangle} from \cref{thm:BubbleGeneral}, we must show that $\kappa^{\beta}=\nu^{\beta(1-\beta),\beta}$. That is, we must show that 
\begin{equation}\label{eq:rectangle}
\h_{\nu^{\beta(1-\beta),\beta}}(x,y)=\min_{0\leq\gamma\leq 1}\left((1-\gamma)y+\hh_1(x,\gamma)\right)
\end{equation} for every $(x,y)\in[0,1]^2$. 

Just as in \cref{subsec:parallelogram}, we can use the symmetry from \cref{rem:symmetry2}. Because $\Rect^{1-\beta}$ is a translation of the rectangle obtained by reflecting $\Rect^\beta$ through the line $\mathfrak L_{1/2}$, we have $\kappa^{1-\beta}=\widehat{\kappa^\beta}$. Since $\nu^{\beta(1-\beta),1-\beta}=\widehat{\nu^{\beta(1-\beta),\beta}}$, we only need to check \eqref{eq:rectangle} when $x\leq\beta$. 

Fix $(x,y)\in[0,1]^2$ with $x\leq \beta$. Then $(x,y)$ is either contained in or below the region $\mathscr{U}_{\nwarrow}^{\beta(1-\beta),\beta}$, and it follows from the definition of $\nu^{\beta(1-\beta),\beta}$ that \[\h_{\nu^{\beta(1-\beta)}}(x,y)=\min\{y,(\beta-x)y+1-\beta\}.\] 
Let $Q(\gamma)=(1-\gamma)y+\hh_1(x,\gamma)$. Note that 
\[\hh_1(x,\gamma)=\min\{\max\{0,\gamma-x,\gamma-\varphi(\gamma)\},1-x,\gamma\}.\] 
If $0\leq\gamma\leq 1-\beta$, then $\varphi(\gamma)=0$ and $\gamma\leq 1-x$, so 
$\hh_1(x,\gamma)=\gamma$. If $1-\beta\leq\gamma\leq x+1-\beta$, then $\varphi(\gamma)=\gamma-1+\beta$, so $\hh_1(x,\gamma)=1-\beta$. If $x+1-\beta\leq\gamma\leq 1$, then $\varphi(\gamma)=\gamma-1+\beta$, so $\hh_1(x,\gamma)=\gamma-x$. It follows that $Q$ is non-decreasing (with slope $1-y$) on the open interval $(0,1-\beta)$, non-increasing (with slope $-y$) on the open interval $(1-\beta,x+1-\beta)$, and non-decreasing on the open interval $(x+1-\beta,1)$. Therefore, the minimum value of $Q(\gamma)$ is attained when $\gamma=0$ or when $\gamma=x+1-\beta$. In other words, 
\begin{align*} 
\min_{0\leq\gamma\leq 1}((1-\gamma)y+\hh_1(x,\gamma))&=\min\{Q(0),Q(x+1-\beta)\} \\ 
&=\min\{y,(\beta-x)y+1-\beta\} \\ &=\h_{\nu^{\beta(1-\beta),\beta}}(x,y).
\end{align*} This completes the proof of \eqref{eq:rectangle} and, hence, the proof of \cref{thm:rectangle}. 

Let us now consider the relationship between the permuton\footnote{We call $\nu^{\beta(1-\beta),\beta}$ a \dfn{Paracanthurus permuton} because it resembles two fins.} $\nu^{\beta(1-\beta),\beta}$ and the model of Dory's memory described at the end of \cref{subsec:intro_Bubble}. Recall that Dory is capable of storing $k=\left\lfloor(1-\beta) n\right\rfloor$ in her mind, where $0<\beta<1$. Let $q_1,\ldots,q_k$ be the relevance factors of the facts that Dory knows on Day $0$, listed in an arbitrary order. For $k+1\leq j\leq n$, let $q_j$ be the relevance factor of the fact that Dory learns on Day $j-k$. Thus, $q_n,q_{n-1},\ldots,q_1$ is a sequence of independent real numbers chosen uniformly at random from $[0,1]$. We may assume that the numbers in this sequence are all distinct since this happens with probability $1$. Therefore, the sequence $q_n,q_{n-1},\ldots,q_1$ has the same relative order as some permutation $w^{(0)}$. The entry $w^{(0)}(i)\in[n]$ corresponds to a certain fact whose relevance factor is the number $q_{n+1-i}\in[0,1]$. It is well known that the distribution of $w^{(0)}$ is the uniform distribution on $\SSS_n$. 

For $1\leq i\leq n-k$, we consider the word ${\bf v}_i=(n-i-k+1,n-i-k+2,\ldots,n-i)$ of length $k$. Let $w^{(j)}=(\tau_{{\bf v}_j}\circ\tau_{{\bf v}_{j-1}}\circ\cdots\circ\tau_{{\bf v}_1})(w^{(0)})$. Observe that the concatenated word ${\bf v}_1{\bf v}_2\cdots{\bf v}_{n-k}$ is a word ${\bf w}(\sS^{(n)})$ corresponding to a linear extension of a $k\times (n-k)$ rectangle shape $\sS^{(n)}$. Thus, $w^{(n-k)}$ has the same distribution as the permutation $u_n$ in \cref{thm:rectangle}. 

The fact that Dory learns on Day $1$ corresponds to $w^{(0)}(n-k)$. When we apply $\tau_{{\bf v}_1}$ to $w^{(0)}$, the smallest element of $w^{(0)}([n-k,n])$ moves all the way to the right end of the permutation. This smallest element, which is $w^{(1)}(n)$, corresponds to the fact that Dory forgets on Day $1$. When we apply $\tau_{{\bf v}_2}$ to the permutation $w^{(1)}$, the smallest element of $w^{(1)}([n-k-1,n-1])$ moves to the right until settling in position $n-1$. This element, which is $w^{(2)}(n-1)$, corresponds to the fact that Dory forgets on Day $2$. Note that the element $w^{(2)}(n)=w^{(1)}(n)$ corresponds to the fact that Dory forgot on Day $1$. Continuing in this manner, we eventually find that the elements $w^{(n-k)}(n),w^{(n-k)}(n-1),\ldots,w^{(n-k)}(k+1)$ correspond to the facts that Dory forgets on Days $1,2,\ldots,n-k$, respectively. Recall that we denote the relevance factors of these facts by $r_1,r_2,\ldots,r_{n-k}$. Hence, for large $n$, the plot of the points $(1-(i-1)/n,r_i)$ for $1\leq i\leq n-k$ closely approximates the portion of the plot of $w^{(n-k)}$ lying in the region $[(k+1)/n,1]\times[0,1]$. Consequently, it also approximates the portion of the plot of the random permutation $u_n$ lying in this region. \cref{thm:rectangle} tells us that $\pi_{u_n}$ is close (in the weak topology) to the permuton $\nu^{\beta(1-\beta),\beta}$. 

\section{Concluding Remarks}\label{sec:conclusion} 

\subsection{Fluctuations}
In \cref{thm:fluctuations}, we computed the asymptotic fluctuations of the height functions of permutons arising from Demazure products of random pipe dreams in order-convex shapes. We have also discussed several other permuton limit results in this article; it would be very interesting to understand the fluctuations of the height functions in these other settings as well. As a possible approach, we seek a method that, given explicit permutons $\mu$ and $\nu$, would allow one to compute the fluctuations of $\h_{u\star v}(x,y)$ when $u$ is a $\mu$-random permutation of size $n$ and $v$ is a $\nu$-random permutation of size $n$. 

In a similar vein, we again highlight the problem of finding a more refined version of \cref{thm:random_star_random} (or \cref{thm:random_star_random_patterns}).   

\subsection{Doppelg\"angers} 
In \cref{rem:doppelgangers}, we noted the mysterious coincidence of two permuton limits arising from different shapes in $\RR$, a rectangle and a non-rectangular parallelogram. Specifically, we found that $\upsilon\star\zeta_1^{\Paral^{\beta(1-\beta),\beta}}=\upsilon\star\zeta_1^{\Rect^\beta}$, where $\upsilon$ is the uniform measure on $[0,1]^2$ and $\Paral^{\beta(1-\beta),
\beta}$ and $\Rect^\beta$ are as defined in \cref{subsec:parallelogram,subsec:diary}. It would be interesting to have a more conceptual explanation of this coincidence or additional examples of this phenomenon. 

\subsection{Stripes} 
Several of the plots of large random permutations depicted in this article exhibit noticeable ``stripes'' (see \cref{fig:rectangle_example_plot,fig:trapezoid_example_plot,fig:gems,fig:L-shape,fig:hole,fig:more}.) It would be very interesting to prove a rigorous statement explaining this phenomenon. In a related vein, one could study local limit properties of these large random permutations in the sense that Borga has recently developed~\cite{Borga}. 

\subsection{More Pictures} 
Many of the permutons discussed in this article are of the form $\mu\star\zeta_p^\DD$, where $p\in(0,1]$, $\mu$ is either the identity permuton or the uniform permuton, and $\DD\in\RR$. We end with \cref{fig:more}, which provides an array of plots of permutations in $\SSS_{2000}$ that approximate some other permutons of this form. We use the notation $\Paral^{\alpha,\beta}$ and $\Rect^\beta$ from \cref{subsec:parallelogram,subsec:diary}. We also let $\ULT\in\RR$ be the limit of scaled staircase shapes (i.e., the region $\mathscr{R}^{\varphi,\psi}$ with $\varphi(z)=0$ and $\psi(z)=z$), and we let $\LLT\in\RR$ be the region obtained by reflecting $\ULT$ through the horizontal line $\{(x,1/2):x\in\mathbb R\}$.  

\begin{figure}[h]
  \begin{center}
  \includegraphics[height=5cm]{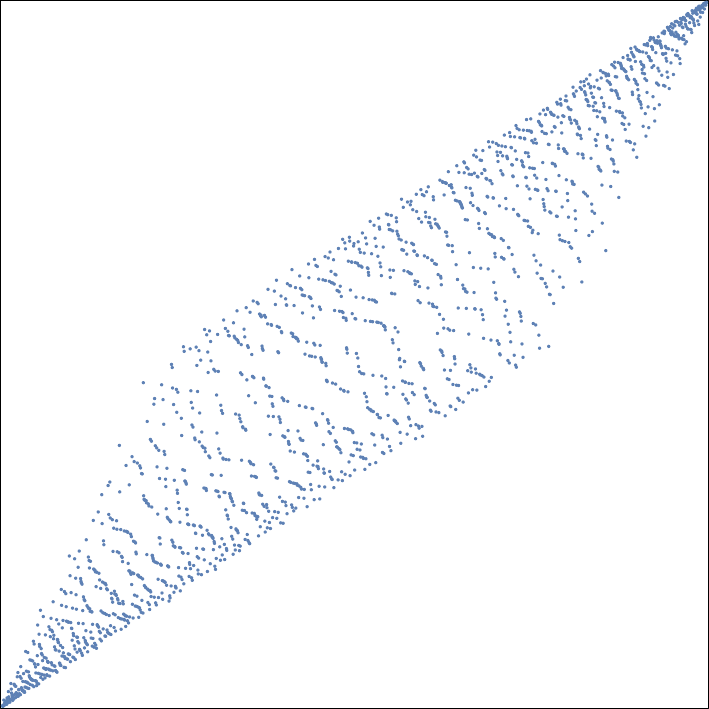}\quad\includegraphics[height=5cm]{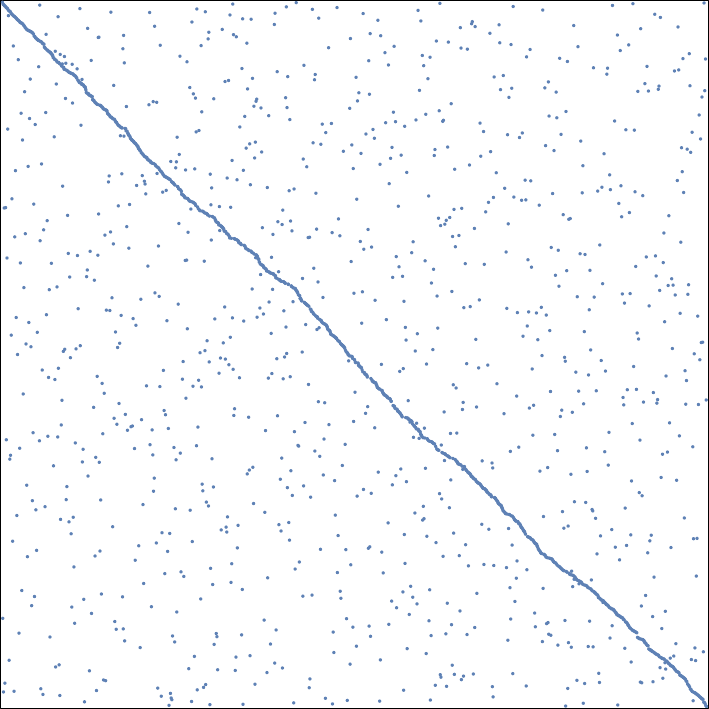}\quad\includegraphics[height=5cm]{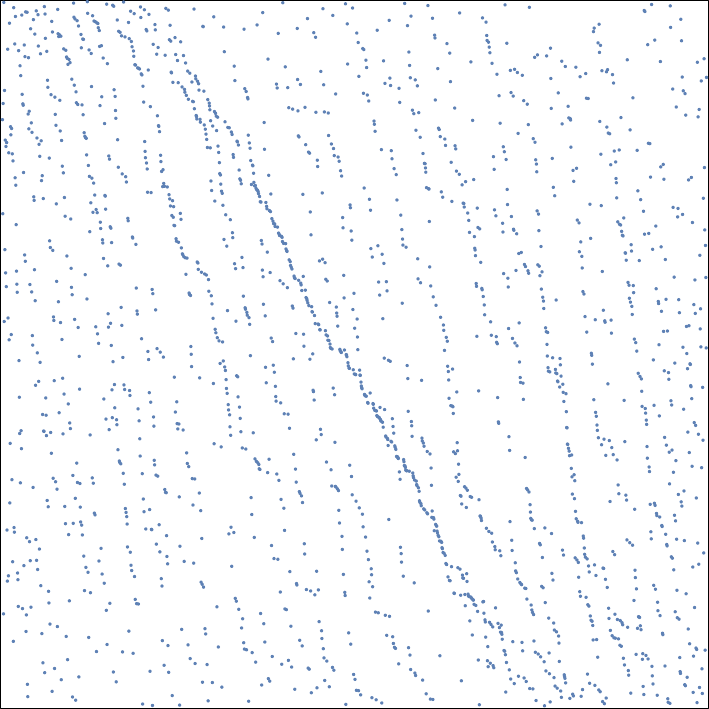} \\ \vspace{0.3cm}
    \includegraphics[height=5cm]{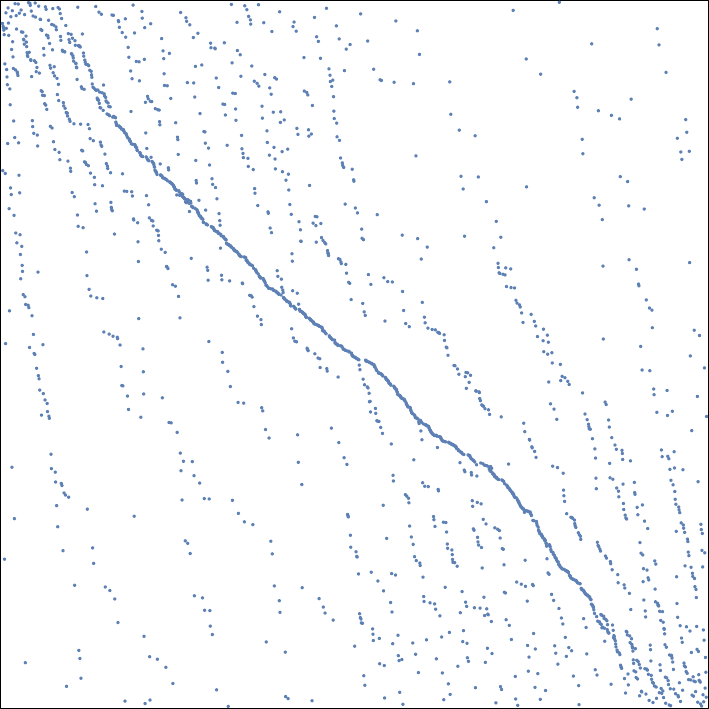}\quad\includegraphics[height=5cm]{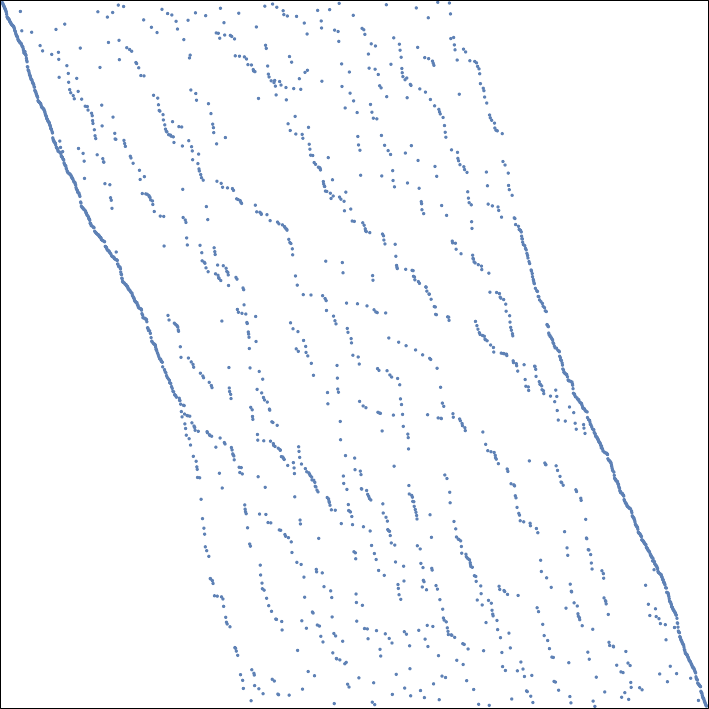}\quad\includegraphics[height=5cm]{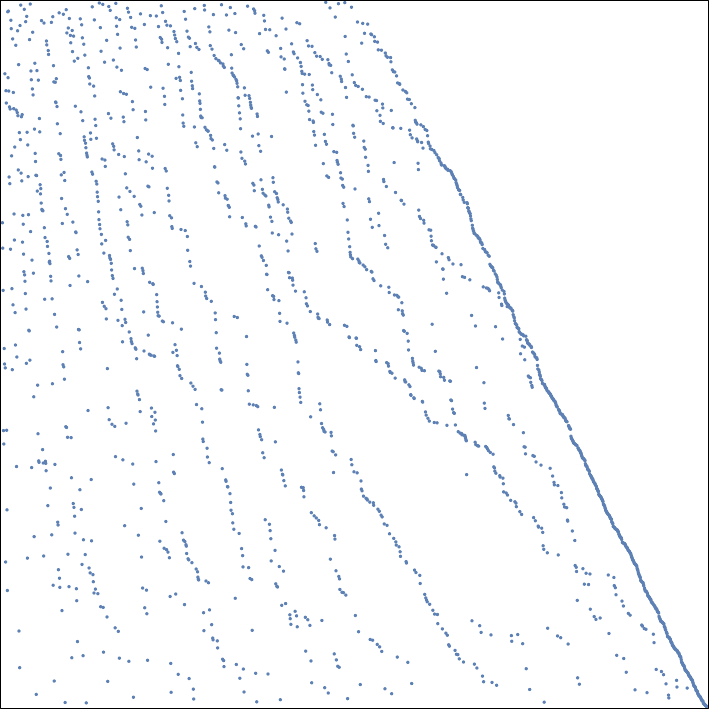}
  \end{center}
\caption{Plots of permutations in $\SSS_{2000}$ that approximate the permutons $\zeta_{2/3}^{\LLT},\,\, \upsilon\star\zeta_1^{\LLT},\,\, \upsilon\star\zeta_{2/3}^{\LLT},\,\, \upsilon\star\zeta_{5/6}^{\Rect^{1/2}},\,\, \upsilon\star\zeta_{5/6}^{\Paral^{1/4,1/2}},\,\, \upsilon\star\zeta_{1/2}^{\ULT}$. }\label{fig:more}  
\end{figure}

\section*{Acknowledgments}
The author was supported by the National Science Foundation under Award No.\ 2201907 and by a Benjamin Peirce Fellowship at Harvard University. He thanks Elise Catania, Greta Panova, Leo Petrov, Derek Liu, Mikhail Tikhonov, and Katherine Tung for helpful discussions.

\end{document}